\renewcommand*\env@matrix[1][*\c@MaxMatrixCols c]{%
 \hskip - 0.7\arraycolsep
 \let\@ifnextchar\new@ifnextchar
 \array{#1}}
\newtheorem{defi}{Definition}
\newtheorem{pro}{Proposition}
\renewcommand{\d}{\operatorname{d}}
\newcommand{\diag}{\operatorname{diag}}
\newcommand{\C}{\mathbb{C}}
\newcommand{\A}{\mathbb{A}}
\newcommand{\R}{\mathbb{R}}
\newcommand{\prodint}[1]{\left\langle{#1}\right\rangle}
\def\@settitle{\begin{center}%
  \baselineskip14\p@\relax
  \bfseries
  \uppercasenonmath\@title
  \@title
  \ifx\@subtitle\@empty\else
  \\[1ex]\uppercasenonmath\@subtitle
  \footnotesize\mdseries\@subtitle
  \fi
 \end{center}%
}
\def\subtitle#1{\gdef\@subtitle{#1}}
\def\@subtitle{}
\title[Orthogonal Polynomials and $LU$ factorization]{Revisiting Biorthogonal Polynomials. \\An $LU$ factorization  discussion}
\author[M Mañas]{Manuel Mañas}
\address{Departamento de Física Teórica, Universidad Complutense de Madrid, 28040-Madrid, Spain \&
	Instituto de Ciencias Matematicas (ICMAT),  Campus de Cantoblanco UAM, 28049-Madrid, Spain}
\email{manuel.manas@ucm.es}
\thanks{\textsc{Acknowledgments:} The author thanks the financial support from the Spanish \emph{Agencia Estatal de Investigación}, research project PGC2018-096504-B-C3  entitled  \emph{Ortogonalidad y Aproximación: Teoría y Aplicaciones en Física Matemática}. The author acknowledges the exhaustive revision by the anonymous referee that  much improved the readability of this contribution}
\begin{document}

\maketitle

\begin{abstract}
The Gauss--Borel or $LU$ factorization  of Gram matrices of bilinear forms is the pivotal element in  the discussion of the theory of biorthogonal polynomials. The construction of biorthogonal families of polynomials and its second kind functions, of the spectral matrices modeling the multiplication by the independent variable $x$, the Christoffel--Darboux kernel and its projection properties, are discussed from this point of view. Then, the Hankel case is presented and different properties, specific of this case, as the three terms relations, Heine formulas, Gauss quadrature and the Christoffel-Darboux formula are given. The classical orthogonal polynomial of Hermite, Laguerre and Jacobi type are discussed and characterized within this scheme. Finally, it  is shown who this approach is instrumental in the derivation of  Christoffel formulas for general Christoffel and Geronimus perturbations of the bilinear forms.
\end{abstract}

\section{Introduction}

These notes correspond to the \emph{orthogonal polynomial} part of the five lectures I delivered during the VII Iberoamerican Workshop in Orthogonal Polynomials and Applications (Seventh EIBPOA). As such, more than presenting new original material, they are intended to give an alternative, but  consistent and systematic, construction of the theory of biorthogonal polynomials using the $LU$ factorization of the Gram matrix of a given bilinear form. 
We refer the interested reader to the  classical texts  \cite{Gautschi2004Orthogonal,Szego1939Orthogonal} for a general background on the subject.

At the beginning, when our group started to work in this area,  this $LU$ approach was motivated by the connection of  the theory of orthogonal polynomials and the theory of integrable systems, and the ubiquity of factorization problems in the description of the later, see \cite{Sato1981Soliton,Date1982transformation,
	Mulase1984Complete,BtK2,Kac2003KP,manas2009multicomponent1,manas2009multicomponent2}.  Adler and van Moerbeke performed a pioneering work regarding this approach \cite{adler-van moerbeke1997,adler-van moerbeke1999,adler-van moerbeke2001,adler-van moerbeke2009}.  Despite this original motivation, we soon realized that this factorization technique allows for a general and systematic approach to the subject of orthogonal polynomials, giving a unified framework that can be extended to more sophisticated orthogonality scenarios.  Given the understandable space constraint of this volume we will not describe the $LU$ description of the Toda/KP integrable hierarchies and refer the reader to the \href{https://docs.google.com/viewer?a=v&pid=sites&srcid=ZGVmYXVsdGRvbWFpbnxlaWJwb2EyMDE4fGd4OjNjYTQ0YmRmNTc1NjhiNTU}{slides}  of my lectures posted in the web page of Seventh EIBPOA.
 
With the background given here we hope that the reader could understand further developments applied in other orthogonality situations. Let us now describe what we have  done regarding this research in with more general orthogonality frameworks.   In \cite{cum1} we studied the \textbf{generalized orthogonal polynomials }\cite{adler-van moerbeke1999} and its matrix extensions from the Gauss--Borel view point. In \cite{cum2} we gave a complete study in terms of this factorization for \textbf{multiple orthogonal polynomials of mixed type }and characterized the integrable systems associated to them. Then, we studied Laurent orthogonal polynomials in the unit circle trough the CMV approach in \cite{carlos} and found in \cite{carlos2} the Christoffel--Darboux formula for generalized orthogonal matrix polynomials. 
 These methods were further extended, for example we gave an alternative Christoffel--Darboux formula for mixed multiple orthogonal polynomials \cite{gerardo1} or developed the corresponding  theory of \textbf{matrix  Laurent orthogonal polynomials in the unit circle }and its associated Toda type hierarchy \cite{MOLPUC}. In \cite{carlos3,geronimus1,geronimus2} a complete analysis, in terms of the spectral theory of matrix polynomials, of Christoffel and Geronimus perturbations and Christoffel formulas was given for \textbf{matrix orthogonal polynomials}, while in \cite{CMV biorthogonal} we gave a complete description for the Christoffel formulas corresponding to Christoffel perturbations for univariate CMV Laurent orthogonal polynomials. We also mention recent developments on \textbf{multivariate orthogonal polynomials }in real spaces (MVOPR), the corresponding Christoffel formula and the interplay with algebraic geometry \cite{MVOPR,MVOPR-darboux}. Similar multivariate situations but for the complex torus and the CMV ordering was analyzed in \cite{MVOLPUT}.
 
 During my lectures I learned, with pleasure, that Diego Dominici is also interested  in the ways  the $LU$ factorization is an useful tool in the theory of orthogonal polynomials. A paper by Dominici related to matrix factorizations and OP will appear in the journal \href{https://www.worldscientific.com/doi/abs/10.1142/S2010326320400031}{Random Matrices: Theory and Applications.}

 \section{$LU$ factorization and Gram matrix}
 
 Given a square complex matrix  $A\in\C^{N\times N}$, an $LU$ factorization refers to the factorization of $A$ into a lower triangular matrix $L$ and an upper triangular matrix $U$

\subsection{$LDU$ factorization}
	An $LDU$ decomposition is a decomposition of the form
	\begin{align*}
	A=LDU,
	\end{align*}
	where $D$ is a diagonal matrix, and $L$ and $U$ are unitriangular matrices, meaning that all the entries on the diagonals of $L$ and $U$ are one.


\paragraph{\textbf{Existence and uniqueness}}
\begin{itemize}
\item Any   $A\in\operatorname{GL}_N(\C)$ admits an $ LU $(or $LDU$) factorization if and only if all its leading principal minors are nonzero.

\item If  $A\in \C^{N\times N}$ is a singular matrix of rank $r$, then it admits an $LU$ factorization if the first  $r$ leading principal minors are nonzero, although the converse is not true.

\item 
If  $A\in \C^{N\times N}$ has an $LDU$ factorization,  then the factorization is unique.
\end{itemize}

\paragraph{\textbf{Cholesky factorization}}
\begin{itemize}
	\item 	If  $A\in\C^{N\times N}$ is a symmetric ($A=A^\top$)  matrix, we can choose $U$ as the transpose of $L$
	\begin{align*}
	A = L DL^\top.
	\end{align*}
	
\item When  $A$ is Hermitian, $A=A^\dagger$,  we can similarly find
	\begin{align*}
	A = L D L^\dagger.
	\end{align*}
\end{itemize}
 These decomposition are called  Cholesky factorizations. The Cholesky decomposition always exists and is unique — provided the matrix is positive definite.

\subsection{Schur complements}
	Given a block matrix $M$ 
	\begin{align*}
	M=\begin{pmatrix}[c|c]
	A & B\\
	\hline
	C & D
	\end{pmatrix}
	\end{align*}
	with $A\in \operatorname{GL}_p(\C)$, $B\in \C^{p\times q}$ , $C\in \C^{q\times p}$  and $D\in \C^{q\times q}$, we define its Schur complement with respect to $A$  
	as
\begin{align*}
	M\backslash A:=D-CA^{-1}B\in\C^{q\times q}.
	\end{align*}

\begin{pro}[Schur complements and $LU$ factorization]
	If $A$ is nonsingular we have for the block matrix $M$ the following factorization
	\begin{align*}
	M=
	\begin{pmatrix}
	I_p & 0_{p\times q}\\
	CA^{-1} & I_q
	\end{pmatrix}
	\begin{pmatrix}
	A & 0_{p\times q}\\
	0_{q\times p}  & M\backslash A
	\end{pmatrix}
	\begin{pmatrix}
	I_p & A^{-1}B\\
	0_{q\times p} & I_q
	\end{pmatrix}.
	\end{align*}
\end{pro}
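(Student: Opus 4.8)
The plan is to establish the factorization by direct verification: I would simply multiply out the three factors on the right-hand side and check that the product equals $M$. Since $A\in\operatorname{GL}_p(\C)$ by hypothesis, the inverse $A^{-1}$ appearing in the two triangular factors and in the definition $M\backslash A=D-CA^{-1}B$ is well defined, so the right-hand side makes sense and no preliminaries are needed beyond this.

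Concretely, I would carry out the multiplication starting from the right. First I would combine the diagonal factor with the upper unitriangular factor,
\begin{align*}
\begin{pmatrix}
A & 0_{p\times q}\\
0_{q\times p} & M\backslash A
\end{pmatrix}
\begin{pmatrix}
I_p & A^{-1}B\\
0_{q\times p} & I_q
\end{pmatrix}
=
\begin{pmatrix}
A & B\\
0_{q\times p} & M\backslash A
\end{pmatrix},
\end{align*}
where the top-right block collapses because $A(A^{-1}B)=B$. Then I would left-multiply by the lower unitriangular factor,
\begin{align*}
\begin{pmatrix}
I_p & 0_{p\times q}\\
CA^{-1} & I_q
\end{pmatrix}
\begin{pmatrix}
A & B\\
0_{q\times p} & M\backslash A
\end{pmatrix}
=
\begin{pmatrix}
A & B\\
C & CA^{-1}B+M\backslash A
\end{pmatrix}.
\end{align*}
Here the bottom-left block is $CA^{-1}A=C$, and the bottom-right block is $CA^{-1}B+(D-CA^{-1}B)=D$ by the definition of the Schur complement. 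Hence the product equals $M$, which is precisely the claimed identity.

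There is no genuine obstacle here: the statement is a purely algebraic identity among block matrices, and the only input is the invertibility of $A$. The only real content, such as it is, lies in recognizing that $M\backslash A$ is exactly the block that must occupy the lower-right corner of the diagonal factor so that the two unitriangular conjugating factors reproduce $M$; the definition $M\backslash A=D-CA^{-1}B$ is engineered precisely so that the cross term $CA^{-1}B$ cancels in the final step. I would close by remarking, for later use, that the outer factors are unitriangular and the central factor is block diagonal, so this is a bona fide block $LDU$-type factorization; by the uniqueness of the $LDU$ decomposition recalled above, it is \emph{the} factorization of $M$ associated with this block partition.
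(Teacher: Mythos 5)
Your direct block-multiplication is correct and complete: the two products you compute are exactly right, and the cancellation $CA^{-1}B+(D-CA^{-1}B)=D$ settles the identity; the paper states this proposition without proof, and this elementary verification is precisely the intended argument. One minor caution on your closing remark: the uniqueness statement recalled in the paper concerns the scalar-entry $LDU$ factorization with unit diagonals, so invoking it verbatim for this coarser block partition is a slight overreach (block uniqueness does hold, but it needs its own one-line argument), though this does not affect the proof of the identity itself.
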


For a detailed account of Schur complements see \cite{zhang}.

Given a matrix $M=(M_{i,j})_{i,j=0}^{N-1}\in \C^{N\times N}$ a $\ell+1$,  $\ell<N$, truncation is given by
\begin{align*}
M^{[\ell+1]}&:=\begin{pmatrix}
M_{0,0}      & M_{0,1}      & \dots  &  M_{0,\ell}\\
M_{1,0}      & M_{1,1}      & \dots  &  M_{1,\ell} \\
\vdots       & \vdots       & \ddots  &  \vdots \\
M_{\ell,0} & M_{\ell,1} & \dots  & M_{\ell,\ell} 
\end{pmatrix}.
\end{align*}

\begin{pro}[Nonzero minors and $LU$ factorization]
Any  nonsingular matrix $M\in\operatorname{GL}_N(\C)$ with  all its leading principal minors different from zero, i. e., $\det M^{[\ell+1]}\neq 0$, $\ell\in\{0,1,,\dots,  N-1\}$, has a $LDU$ factorization.
\end{pro}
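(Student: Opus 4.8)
The plan is to establish \emph{existence} of the factorization by induction on the size $N$, peeling off the last row and column at each step by means of the Schur complement factorization of the previous proposition; uniqueness is already granted by the existence-and-uniqueness remarks above, so it suffices to exhibit one factorization. For the base case $N=1$ I would simply write $M=(M_{0,0})=(1)\,(M_{0,0})\,(1)$, which is a valid $LDU$ factorization because the single leading principal minor $M_{0,0}$ is nonzero.

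For the inductive step, assuming the claim for all sizes below $N$, I would apply the block decomposition with $p=N-1$ and $q=1$. Then $A:=M^{[N-1]}$ is the leading $(N-1)\times(N-1)$ block, $B$ and $C$ collect the last column and last row (without the corner), and $M\backslash A=M_{N-1,N-1}-CA^{-1}B$ is a scalar. The decisive observation is that the leading principal minors of $A$ are exactly the first $N-1$ leading principal minors of $M$, hence all nonzero; in particular $\det A=\det M^{[N-1]}\neq 0$, so $A$ is nonsingular and the Schur complement factorization applies, giving
\begin{align*}
M=
\begin{pmatrix}
I_{N-1} & 0\\
CA^{-1} & 1
\end{pmatrix}
\begin{pmatrix}
A & 0\\
0 & M\backslash A
\end{pmatrix}
\begin{pmatrix}
I_{N-1} & A^{-1}B\\
0 & 1
\end{pmatrix}.
\end{align*}

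By the inductive hypothesis $A=L_A D_A U_A$ with $L_A,U_A$ unitriangular and $D_A$ diagonal, so the central factor splits as a product of a lower unitriangular, a diagonal, and an upper unitriangular block matrix. Substituting and regrouping, I would set
\begin{align*}
L:=
\begin{pmatrix}
I_{N-1} & 0\\
CA^{-1} & 1
\end{pmatrix}
\begin{pmatrix}
L_A & 0\\
0 & 1
\end{pmatrix},
\qquad
D:=
\begin{pmatrix}
D_A & 0\\
0 & M\backslash A
\end{pmatrix},
\qquad
U:=
\begin{pmatrix}
U_A & 0\\
0 & 1
\end{pmatrix}
\begin{pmatrix}
I_{N-1} & A^{-1}B\\
0 & 1
\end{pmatrix},
\end{align*}
so that $M=LDU$.

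What remains is routine bookkeeping on triangularity: a product of lower (respectively upper) unitriangular matrices is again lower (respectively upper) unitriangular, so $L$ and $U$ inherit the correct structure, while $D$ is diagonal by construction, completing the induction. I expect the only genuinely delicate point to be recognizing that the leading block $A=M^{[N-1]}$ automatically satisfies the hypotheses of the inductive statement; this is precisely why peeling off the last coordinate rather than the first is convenient, since then the leading principal minors of $A$ coincide with those of $M$ and no separate computation of the minors of a Schur complement is needed. As a consistency check one may note $\det M=\det A\cdot(M\backslash A)$, which forces $M\backslash A\neq 0$ and hence $D$ invertible, in agreement with $M\in\operatorname{GL}_N(\C)$.
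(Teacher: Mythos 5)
Your proof is correct and follows essentially the same route as the paper: both peel off the last row and column via the Schur complement factorization with respect to $M^{[N-1]}$ and then descend through the nested truncations, relying on the observation that the leading principal minors of $M^{[N-1]}$ are the first $N-1$ leading principal minors of $M$. The only difference is presentational — you package the descent as a formal induction on $N$, while the paper unrolls the same recursion explicitly to display $D=\diag\big(M^{[1]}\backslash M^{[0]},\dots,M\backslash M^{[N-1]}\big)$.
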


\begin{proof}
	As $\det M^{[N-1]}\neq 0$, we have
\begin{align*}
M=
\begin{pmatrix}
I_{N-1} & 0_{(N-1)\times 1}\\
v_N  &1
\end{pmatrix}
\begin{pmatrix}
M^{[N-1]} & 0_{(N-1)\times 1}\\
0_{1\times (N-1)} & M\backslash M^{[N-1]}
\end{pmatrix}
\begin{pmatrix}
I_{N-1} & w_N\\
0_{1\times (N-1)} & 1
\end{pmatrix}
\end{align*}
where $v_N$ and $w_N$ are  row and column vectors, respectively,  with $N-1$ components.

Now, from $\det M^{[N-2]}\neq 0$ we deduce
{\small		\begin{align*}\hspace*{-0.6cm}
	M^{[N-1]}=
	\begin{pmatrix}
	I_{N-2} & 0_{(N-2)\times 1}\\
	v_{N-1}  &1
	\end{pmatrix}
	\begin{pmatrix}
	M^{[N-2]} & 0_{(N-2)\times 1}\\
	0_{1\times (N-2)} & M^{[N-1]}\backslash M^{[N-2]}
	\end{pmatrix}
	\begin{pmatrix}
	I_{N-2} & w_{N-1}\\
	0_{1\times (N-2)} & 1
	\end{pmatrix}
	\end{align*}}
which inserted in the previous result yields
\begin{align*}
	M=
	\begin{pmatrix}[c|cc]
	I_{N-2} & \multicolumn{2}{c}{0_{(N-2)\times 2}}\\\hline
	\multirow{2}{*}{*} &1 &0\\
	&*&1
	\end{pmatrix}
	\begin{pmatrix}[c|cc]
	M^{[N-2]} & 
	\multicolumn{2}{c}{0_{(N-2)\times 2}}\\\hline
	\multirow{2}{*}{$0_{2\times (N-2)}$}& M^{[N-1]}\backslash M^{[N-2]}&0\\
	&0 & M\backslash M^{[N-1]}
	\end{pmatrix}
	\begin{pmatrix}[c|cc]
	I_{N-2} & \multicolumn{2}{c}{*}\\\hline
	\multirow{2}{*}{$0_{2\times (N-2)} $}& 1 & *\\
	&0 &1
	\end{pmatrix}.
 	\end{align*}
We finally get an $LDU$ factorization with 
\begin{align*}
D=\diag\big(
M^{[1]} \backslash M^{[0]}, M^{[2]} \backslash M^{[1]},\dots,
M\backslash M^{[N-1]}\big).
\end{align*}
\end{proof}
\subsection{Bilinear forms, Gram matrices and $LU$ factorizations}

	A  bilinear  form  $\prodint{\cdot,\cdot}$  on the ring of complex valued polynomials in the variable $x$,  $\mathbb{C}[x]$,  is a continuous map
	\begin{align*}
	\begin{array}{cccc}
	\prodint{\cdot,\cdot}: &\mathbb{C}[x]\times\mathbb{C}[x]&\longrightarrow &\mathbb{C}\\
	&(P(x), Q(x))&\mapsto& \prodint{P(x),Q(y)}
	\end{array}
	\end{align*}
	such that for any triple $P(x),Q(x),R(x)\in  \mathbb{C}[x]$ the following properties are fulfilled
	\begin{enumerate}
		\item  $\prodint{AP(x)+BQ(x),R(y)}=A\prodint{P(x),R(y)}+B\prodint{Q(x),R(y)}$, $\forall A,B\in\mathbb{C}$,
		\item $\prodint{P(x),AQ(y)+BR(y)}=\prodint{P(x),Q(y)}A+\prodint{P(x),R(y)}B$, $\forall A,B\in\mathbb{C}$.
	\end{enumerate}

Observe that we have  not chosen the   conjugate in one of the variables.

For  $P(x)=\sum\limits_{k=0}^{\deg P}p_kx^k$ and $Q(x)=\sum\limits_{l=0}^{\deg Q} q_lx^l$ the bilinear form is 
\begin{align*}
\prodint{P(x),Q(y)}&=\sum_{\substack{k=1,\dots,\deg P\\
	l=1,\dots,\deg Q}}p_k G_{k,l}q_l, &
G_{k,l}&=\prodint{x^k ,y^l }.
\end{align*}
The corresponding \textbf{semi-infinite} matrix
\begin{align*}
G=\begin{pmatrix}
G_{0,0 } &G_{0,1}& \dots\\
G_{1,0} & G_{1,1} & \dots\\
\vdots & \vdots
\end{pmatrix},
\end{align*}
is  the  so called \textbf{ Gram matrix} of the sesquilinear form.

\paragraph{\textbf{Examples:}}
\begin{itemize}
	\item \textbf{Borel measures.}
	A first example is given by a complex (or real) Borel measure $\d\mu$ with support  $\operatorname{supp}(\d\mu)\subset\mathbb R$ . 
	Given any pair of matrix polynomials $P(x),Q(x)\in\C[x]$  we introduce the following    bilinear form
	\begin{align*}
	\prodint{P(x),Q(x)}_\mu=\int_\R P(x)\d\mu(x)Q(x).
	\end{align*}
\item \textbf{Example: Linear functionals}
	{\small We consider the space of polynomials $\mathbb C[x]$, with an appropriate topology,  as the space of fundamental functions and take the space of generalized functions as the corresponding continuous linear functionals.  Take a linear functional $u\in (\C[x])'$ and consider}
	\begin{align*}
	\prodint{P(x),Q(x)}_u=u( P(x)Q(x)).
	\end{align*}
\end{itemize}

In both examples the Gram matrix is a Hankel matrix $G_{i+1,j}=G_{i,j+1}$
In these cases,  the Gram matrix is also known as moment matrix as we have 
\begin{align*}
G_{i,j}=\int_{\R} x^{i+j}\d \mu(x)=m_{i+j},
\end{align*}
where $m_j$ is known as the $j$-moment, 
for the measure case, while for the linear functional scenario we have
\begin{align*}
G_{i,j}=u( x^{i+j}).
\end{align*}

\paragraph{\textbf{Schwartz generalized kernels}}
	There are bilinear forms which do not have this particular Hankel type symmetry.  Let    $u_{x,y}\in (\mathbb C[x,y])'\cong\mathbb C[\![x,y]\!]$ so that
	\begin{align*}
	\prodint{P(x),Q(y)}=\langle u_{x,y}, P(x)\otimes Q(y)\rangle.
	\end{align*}
	The Gram matrix of this bilinear form has entries 
	\begin{align*}
	G_{k,l}=\langle u_{x,y}, x^k\otimes y^{l}\rangle. 
	\end{align*}
	This gives a continuous  linear map $\mathcal L_u: \mathbb C[y]\to(\mathbb C [x])'$ such that $\prodint{P(x),Q(y)}=\langle \mathcal L_u(Q(y)), P(x)\rangle$.
	See \cite{Schwartz1} for an introduction to bivariate generalized kernels.

\textbf{Integrals kernels.}
A kernel  $u(x,y)$ is a complex valued locally integrable function,
		that defines an integral operator $f(x)\mapsto g(x)=\int u(x,y) f(y)\d y $ and
		\begin{align*}
		\prodint{P(x),Q(y)}=\int_{\R^2} P(x) u(x,y)		Q(y)\d x\d y 
		\end{align*}

	There is an obvious way of ordering the monomials $\{x^n\}_{n=0}^\infty$ in a semi-infinite vector
	\begin{align*}
	\chi(x)=(1, x,x^2,\dots)^\top
	\end{align*}
 Also, we consider
	\begin{align*}
	\chi^*(x)& =
(x^{-1}, x^{-2},x^{-3},\dots)^\top, &\text{with}& & (\chi(x))^\top \chi^*(y)&=\frac{1}{x-y}, & |y|<|x|.
	\end{align*}

The semi-infinite Gram matrix can be written as follows
$G=\prodint{\chi, \chi^\top}$.
For a Borel measure it reads
$G=\int \chi(x) \big(\chi(x)\big)^\top\d\mu(x)$
and for a linear functional
$G=\prodint{u(x),\chi(x) \big(\chi(x)\big)^\top}$.
When dealing with an integral kernel we have
$
G=\int \chi(x) \big(\chi(y)\big)^\top u(x,y)\d x$
and for a Schwartz kernel
$G=\prodint{u(x,y),\chi(x) \big(\chi(y)\big)^\top}$.

\section{Orthogonal polynomials}

\begin{defi}[Quasi-definite bilinear forms]
	We say that a bilinear form $\prodint{\cdot,\cdot}$ is quasi-definite whenever its Gram matrix 
	has all its leading principal minors different from zero.
\end{defi}

\begin{pro}[Quasi-definiteness and $LDU$ factorization]
	The Gram matrix of a quasi-definite bilinear form admits a unique $LDU$ factorization.
\end{pro}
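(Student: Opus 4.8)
The plan is to reduce the semi-infinite statement to the finite one already established in the Proposition on nonzero minors and $LU$ factorization, and then to glue the finite factorizations together by invoking the uniqueness of the finite $LDU$ factorization noted in the existence and uniqueness discussion above. The only genuine work is the passage from finite truncations to the semi-infinite matrix, and this is controlled entirely by that uniqueness together with a block-triangular truncation identity.

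First I would observe that quasi-definiteness means precisely $\det G^{[\ell+1]}\neq 0$ for every $\ell\in\{0,1,2,\dots\}$; in particular each truncation $G^{[\ell+1]}$ is a nonsingular matrix all of whose leading principal minors are nonzero. Hence the finite proposition applies and produces, for each $\ell$, a factorization
\[
G^{[\ell+1]}=L^{[\ell+1]}D^{[\ell+1]}U^{[\ell+1]},
\]
with $L^{[\ell+1]}$ lower unitriangular, $U^{[\ell+1]}$ upper unitriangular and $D^{[\ell+1]}$ diagonal, the latter having entries $D^{[\ell+1]}_{k,k}=G^{[k+1]}\backslash G^{[k]}=\det G^{[k+1]}/\det G^{[k]}$, with the convention $\det G^{[0]}:=1$.

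The key step is compatibility under truncation. For a lower-triangular $L$, a diagonal $D$ and an upper-triangular $U$, the leading $(\ell+1)$-truncation of the product $LDU$ equals $L^{[\ell+1]}D^{[\ell+1]}U^{[\ell+1]}$, because the off-diagonal blocks of $L$ and $U$ do not contribute to the top-left corner. Applying this remark to the factorization of $G^{[\ell+2]}$ yields an $LDU$ factorization of $G^{[\ell+1]}$ whose factors are the $(\ell+1)$-truncations of $L^{[\ell+2]},D^{[\ell+2]},U^{[\ell+2]}$. By the uniqueness of the finite $LDU$ factorization these must coincide with $L^{[\ell+1]},D^{[\ell+1]},U^{[\ell+1]}$. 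Thus the finite factors are nested, so I can define semi-infinite matrices $L,D,U$ by declaring their $(\ell+1)$-truncations to be $L^{[\ell+1]},D^{[\ell+1]},U^{[\ell+1]}$; consistency is exactly the nesting just proved, and it is also transparent from the determinantal formula for $D_{k,k}$, which does not depend on $\ell$.

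Finally I would check $G=LDU$ entrywise: since $L$ is lower triangular and $U$ upper triangular, $(LDU)_{i,j}=\sum_{k=0}^{\min(i,j)}L_{i,k}D_{k,k}U_{k,j}$ is a finite sum equal to $(L^{[\ell+1]}D^{[\ell+1]}U^{[\ell+1]})_{i,j}=G_{i,j}$ for any $\ell\geq\max(i,j)$, so no convergence issue arises. Uniqueness at the semi-infinite level follows the same route: any two $LDU$ factorizations of $G$ truncate, via the same block identity, to $LDU$ factorizations of every $G^{[\ell+1]}$, which agree by the finite uniqueness; hence the semi-infinite factors agree in every truncation and therefore entrywise. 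I expect the only delicate point to be this gluing, but triangularity makes every relevant product a finite sum and the finite uniqueness pins down each truncation, so the argument goes through without further analytic hypotheses.
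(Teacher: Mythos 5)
Your proposal is correct and follows exactly the route the paper intends: the paper states this proposition without proof as an immediate consequence of the preceding finite-dimensional result (``Nonzero minors and $LU$ factorization'') together with the uniqueness of the finite $LDU$ factorization, and your truncation--nesting argument is precisely the standard way to make that reduction rigorous for the semi-infinite Gram matrix. The compatibility-under-truncation step and the observation that triangularity keeps all products as finite sums are the right details to supply, and they close the gap the paper leaves implicit.
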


	Given a quasi-definite bilinear form in the space of polynomials we consider the  $LDU$ factorization of its Gram matrix in the form 
	\begin{align*}
	G=(S_1)^{-1} H (S_2)^{-\top}
	\end{align*}
	where $S_1$ and $S_2$ are lower unitriangular matrices and $H$ is a diagonal matrix.

When the quasi-definite bilinear form comes from a Borel measure or a linear functional the corresponding Gram matrix, now known as moment matrix, is symmetric: $G=G^\top$. Thus, the $LDU$ factorization becomes a Cholesky factorization. Whenever  the Borel measure is positive  (sign defined will equally do) the moment matrix is a positive definite  matrix, i.e.,  all  the principal minors of the moment matrix are strictly positive.
Given either a  Borel measure or a linear functional,  we consider its $LDU$ factorization 
\begin{align*}
G=S^{-1} H S^{-\top},
\end{align*}
where $S$ is a  lower unitriangular matrix and $H$ is a diagonal matrix. For a Borel positive measure, the diagonal coefficients  $H_k$ are positive, $H_k>0$.

\begin{defi}[Constructing the polynomials]
	Given a Gram matrix and its Gauss--Borel factorization we construct the following two families of polynomials
\begin{align*}
P_1(x)&:= S_1\chi(x), & P_2(x)&:= S_2\chi(x).
\end{align*}
Here $P_i=(P_{i,0},P_{i,1},\dots, )$ with $P_{i,k}(x)=x^k+\cdots$, monic polynomials of  degree $k$.
\end{defi}

\begin{pro}[Orthogonality relations]
The above families of polynomials  satisfy the following orthogonality relations
\begin{align*}
\prodint{P_{1,k}(x),y^l }&=\delta_{l,k}H_k, & 
\prodint{x^l, P_{2,k}(y) }&=\delta_{l,k}H_k, & 0\leq l<k.
\end{align*}
\end{pro}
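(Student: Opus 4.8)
The plan is to rewrite each bilinear pairing as a single entry of a product of the factorization matrices, and then read off the value from the triangular structure of $S_1$, $H$ and $S_2$.

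For the first family I would start from $P_1(x)=S_1\chi(x)$, so that $P_{1,k}(x)=\sum_j (S_1)_{k,j}\,x^j$. Using left-linearity (property (1)) together with $G_{j,l}=\prodint{x^j,y^l}$, the pairing against a monomial becomes $\prodint{P_{1,k}(x),y^l}=\sum_j (S_1)_{k,j}G_{j,l}=(S_1G)_{k,l}$. The factorization $G=(S_1)^{-1}H(S_2)^{-\top}$ gives $S_1G=H(S_2)^{-\top}$, and since $H$ is diagonal I obtain $\prodint{P_{1,k}(x),y^l}=H_k\,\big((S_2)^{-\top}\big)_{k,l}$.

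The conclusion then follows from triangularity. As $S_2$ is lower unitriangular, so is $(S_2)^{-1}$, hence $(S_2)^{-\top}$ is upper unitriangular: its $(k,l)$ entry is $0$ for $l<k$ and $1$ for $l=k$. This yields $\prodint{P_{1,k}(x),y^l}=\delta_{l,k}H_k$ throughout $0\le l\le k$, the vanishing for $l<k$ being the stated orthogonality and the case $l=k$ returning the diagonal value $H_k$. The second relation is entirely symmetric: right-linearity (property (2)) gives $\prodint{x^l,P_{2,k}(y)}=\big(G(S_2)^\top\big)_{l,k}$; the factorization turns $G(S_2)^\top$ into $(S_1)^{-1}H$, so the entry equals $\big((S_1)^{-1}\big)_{l,k}H_k$, and the lower unitriangularity of $(S_1)^{-1}$ closes the argument.

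I do not expect a genuine obstacle here; the only care needed is bookkeeping. Because the form is bilinear rather than sesquilinear, the coefficients of $P_{2,k}$ must be placed on the \emph{right} in the second slot, which is why $\prodint{x^l,P_{2,k}(y)}$ produces $G(S_2)^\top$ and not $(S_2)G$; and one must identify $(S_2)^{-\top}$ correctly as \emph{upper} unitriangular, so that the entries which vanish lie strictly below the diagonal and the orthogonality comes out in the right index range.
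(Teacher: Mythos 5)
Your proposal is correct and follows essentially the same route as the paper: it computes $\prodint{P_1(x),(\chi(y))^\top}=S_1G=H(S_2)^{-\top}$ and reads off the triangular structure, merely spelling out entrywise what the paper leaves as "the result follows" and treating the second family symmetrically via $G(S_2)^\top=(S_1)^{-1}H$. No gaps.
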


\begin{proof}
	We have that $\prodint{P_1(x), \big(\chi(y)\big)^\top}=S_1\prodint{\chi(x),(\chi(y))^\top}=S_1G=
H(S_2)^{-\top}$ is an upper triangular matrix, and the result follows.
\end{proof}

\begin{pro}[Biorthogonality relations]

The above families of polynomials are biorthogonal
\begin{align*}
\prodint{P_{1,k}(x),P_{2,l}(x) }&=\delta_{l,k}H_k.
\end{align*}
\end{pro}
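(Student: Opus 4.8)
The plan is to assemble all the pairings $\prodint{P_{1,k}(x),P_{2,l}(x)}$ at once into a single semi-infinite matrix and to recognize it, via the factorization, as the diagonal matrix $H$. This is the exact analogue of the computation used in the proof of the orthogonality relations above, now carried out on both families simultaneously rather than pairing one family against the bare monomials.

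First I would write the two families in matrix form, $P_1(x)=S_1\chi(x)$ and $P_2(y)=S_2\chi(y)$, so that $(P_2(y))^\top=(\chi(y))^\top (S_2)^\top$. Using the bilinearity of $\prodint{\cdot,\cdot}$ — properties (i) and (ii) in the definition of the bilinear form, which let the (here matrix) coefficients be extracted on the left and on the right — I would pull the matrices $S_1$ and $(S_2)^\top$ out of the pairing:
\begin{align*}
\prodint{P_1(x),(P_2(y))^\top}=S_1\prodint{\chi(x),(\chi(y))^\top}(S_2)^\top=S_1\,G\,(S_2)^\top,
\end{align*}
where in the last step I use $G=\prodint{\chi,\chi^\top}$.

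Next I would substitute the $LDU$ factorization $G=(S_1)^{-1}H(S_2)^{-\top}$ of the Gram matrix. The unitriangular factors cancel against their inverses,
\begin{align*}
S_1\,G\,(S_2)^\top=S_1(S_1)^{-1}H(S_2)^{-\top}(S_2)^\top=H,
\end{align*}
and since $H=\diag(H_0,H_1,\dots)$ is diagonal, reading off the $(k,l)$ entry gives $\prodint{P_{1,k}(x),P_{2,l}(x)}=\delta_{k,l}H_k$, as claimed.

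I do not anticipate a genuine obstacle: everything rests on the bilinearity needed to extract the coefficient matrices from the form, and on the associativity of the formal products of semi-infinite matrices that lets the triangular factors cancel. The only point requiring care is keeping track of the transposes — the second family must enter through $(S_2)^\top$ on the right so that it meets the factor $(S_2)^{-\top}$ coming from $G$ — but this is bookkeeping rather than a substantive difficulty.
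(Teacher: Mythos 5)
Your proposal is correct and follows essentially the same route as the paper's own proof: compute $\prodint{P_1(x),(P_2(y))^\top}=S_1G(S_2)^\top$ and substitute the factorization $G=(S_1)^{-1}H(S_2)^{-\top}$ to obtain $H$. In fact your write-up is slightly more careful than the paper's, whose displayed chain contains the typographical slip $S_1G=H$ where $S_1G(S_2)^\top=H$ is meant.
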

\begin{proof}
	We have that $\prodint{P_1(x), \big(P_2(y)\big)^\top}=
	S_1\prodint{\chi(x),(\chi(y))^\top}(S_2)^{\top}=S_1G=H$.
\end{proof}

\subsection{Quasi-determinants}
We include this brief  section here because, despite the fact that for the standard orthogonality the quotient of determinants is enough to describe adequately  the results, in more general situations quasi-determinants are needed. Also, even in this situation they give more compact expressions. As we will see we can understand them as an extension of determinants to noncommutative situations and also as Schur complements.
\paragraph{\textbf{Some history}}
	In the late 1920  Archibald Richardson, one of the two responsible of Littlewood--Richardson rule,  and the famous logician Arend Heyting, founder of intuitionist logic, studied possible extensions of the determinant notion to division rings. Heyting defined the \emph{designant} of a matrix with noncommutative entries, which for $2\times 2$ matrices was the Schur complement, and generalized to larger dimensions by induction.
\paragraph{\textbf{The situation nowadays}}
	1990 till today, was given by  Gel'fand, Rektah and collaborators, see \cite{Gelfand2005}.
	Quasi-determinants were defined over free division rings and was early noticed that\textbf{ is not an analog of the commutative determinant but rather of a ratio of determinants}. A cornerstone for  quasi-determinants is  the  \emph{heredity principle}, \textbf{quasi-determinants of quasi-determinants are quasi-determinants}; there is no analog of such a principle for determinants.

\subsubsection{The easiest quasi-determinant: a Schur complement}  We start with $k=2$, so that $A=\begin{psmallmatrix}  A_{1,1} & A_{1,2}\\ A_{2,1} & A_{2,2} \end{psmallmatrix}$. In this case the first quasi-determinant
$\Theta_1(A)\coloneq A/ A_{1,1}$; i. e., a Schur complement which requires $\det A_{1,1}\neq 0$
\paragraph{\textbf{Olver vs Gel'fand}} 
The notation of Olver \cite{Olver} 
is different to that of the Gel'fand school were $\Theta_1(A)=|A|_{2,2}=\begin{vsmallmatrix}  A_{1,1} & A_{1,2}\\ A_{2,1} & {\boxed{ A_{2,2}}} \end{vsmallmatrix}$.
There is another quasi-determinant $\Theta_2(A)=A/ A_{22}=|A|_{1,1}=\begin{vsmallmatrix} {\boxed{A_{1,1}}} & A_{1,2}\\ A_{2,1} & A_{2,2} \end{vsmallmatrix}$, the other Schur complement, and we need $A_{2,2}$ to be a non singular matrix. Other quasi-determinants that can be considered for regular square blocks are
$\begin{vsmallmatrix}
A_{1,1} &A_{1,2}\\
{\boxed{A_{2,1}}}&A_{2,2}
\end{vsmallmatrix}
$
and $\begin{vsmallmatrix} A_{1,1} &{\boxed{A_{1,2}}}\\ A_{2,1} & A_{2,2} \end{vsmallmatrix}$. These last two quasi-determinants are out of the scope of these notes.

\textbf{Example:} Consider
\begin{align*}
A=\begin{pmatrix}[c|c:c]
A_{1,1} & A_{1,2}&A_{1,3}\\\hline
A_{2,1} & A_{2,2}&A_{2,3}\\\hdashline
A_{3,1} & A_{3,2}&A_{3,3}
\end{pmatrix}
\end{align*}
and take the quasi-determinant with respect  the first diagonal block, which we define as the Schur complement indicated by the non dashed lines
\begin{align*}
\Theta_1(A)&=\begin{vmatrix}  A_{11,1} &\begin{matrix} A_{1,2} &A_{1,3}\end{matrix}\\\begin{matrix}
A_{2,1}\\A_{3,1}
\end{matrix} &\boxed{\begin{matrix}A_{2,2}&A_{2,3}
\\ A_{3,2}&  A_{3,3}\end{matrix}}
\end{vmatrix}={\begin{pmatrix}
A_{2,2}&A_{2,3}\\
A_{3,2}&A_{3,3}
\end{pmatrix}}-
{\begin{pmatrix}
A_{2,1} \\
A_{3,1}
\end{pmatrix}}A_{1,1}^{-1}{\begin{pmatrix}
A_{1,2}&A_{1,3}
\end{pmatrix}}
\\&=
\begin{pmatrix}[c:c]
A_{2,2}- A_{2,1}A_{1,1}^{-1}A_{1,2}  & A_{2,3}-A_{2,1}A_{1,1}^{-1}A_{1,3}\\\hdashline
A_{3,2}-A_{3,1}A_{1,1}^{-1} A_{1,2}& A_{3,3}-A_{3,1}A_{1,1}^{-1} A_{1,3}
\end{pmatrix}.
\end{align*}

Take the quasi-determinant given by the Schur complement as indicated by the dashed lines
\begin{align*}
\Theta_2(\Theta_1(A))&=\begin{vmatrix}
A_{2,2}- A_{2,1}A_{1,1}^{-1}A_{1,2}  & A_{2,3}-A_{2,1}A_{1,1}^{-1}A_{1,3}\\
A_{3,2}-A_{3,1}A_{1,1}^{-1} A_{1,2}& \boxed{A_{3,3}-A_{3,1}A_{1,1}^{-1} A_{1,3}}
\end{vmatrix}\\
&=
{{\begin{multlined}[t][0.75\textwidth]
A_{3,3}-A_{3,1}A_{1,1}^{-1} A_{1,3}-  \big(A_{3,2}-A_{3,1}A_{1,1}^{-1} A_{1,2}\big)\big(A_{2,2}- A_{2,1}A_{1,1}^{-1}A_{1,2}\big)^{-1}\big(A_{2,3}-A_{2,1}A_{1,1}^{-1}A_{1,3}\big)
\end{multlined}}}
\end{align*}
Compute, for the very same matrix
\begin{align*}
A={\begin{pmatrix}[cc|c]
A_{1,1} & A_{1,2}&A_{1,3}\\
A_{2,1} & A_{2,2}&A_{2,3}\\\hline
A_{3,1} & A_{3,2}&A_{3,3}
\end{pmatrix}}
\end{align*}
the Schur complement indicated by the non-dashed lines, that is,
\begin{align*}
\Theta_{\{1,2\}}(A)=&\begin{vmatrix}  A_{1,1} & A_{1,2} & A_{1,3}\\ A_{2,1} & A_{2,2} &A_{2,3}\\
A_{1,3}& A_{2,3} & \boxed{A_{3,3}}
\end{vmatrix}=A_{3,3}- {\begin{pmatrix}
A_{3,1} & A_{3,2}
\end{pmatrix}}
{\begin{pmatrix}
A_{1,1} & A_{1,2}\\
A_{2,1} & A_{2,2}
\end{pmatrix}}^{-1}
{\begin{pmatrix}
A_{1,3}\\
A_{2,3}
\end{pmatrix}}.
\end{align*}

Now, from
\begin{align*}
{\begin{pmatrix}
A_{1,1} & A_{1,2}\\
A_{2,1} & A_{2,2}
\end{pmatrix}}^{-1}= {\begin{pmatrix}
A_{1,1}^{-1}+A_{1,1}^{-1}A_{1,2}( A_{2,2}-A_{2,1}A_{1,1}^{-1}A_{1,2})^{-1}A_{2,1}A_{1,1}^{-1}
&-A_{1,1}^{-1}A_{1,2}( A_{2,2}-A_{2,1}A_{1,1}^{-1}A_{1,2})\\
-( A_{2,2}-A_{2,1}A_{1,1}^{-1}A_{1,2})^{-1}A_{2,1}A_{1,1}^{-1} &( A_{2,2}-A_{2,1}A_{1,1}^{-1}A_{1,2})^{-1}
\end{pmatrix}},
\end{align*}
we deduce   ${\Theta_2(\Theta_1(A))=\Theta_{\{1, 2\}}(A)}$, the simplest case of the heredity principle.

\begin{pro}[Heredity Principle]
Quasi-determinants of quasi-determinants are quasi-determinants.
\end{pro}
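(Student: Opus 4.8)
The plan is to translate the principle into the language of Schur complements and reduce it to a single underlying identity, the \emph{quotient formula}, from which the general statement follows by induction on the number of nested blocks. By the definitions above, taking a quasi-determinant $\Theta_I(A)$ with respect to a diagonal block indexed by $I$ is exactly forming the Schur complement of $A$ with respect to that block; so the claim that a quasi-determinant of a quasi-determinant is again a quasi-determinant says that a Schur complement of a Schur complement is a Schur complement with respect to a larger block. Concretely, I would partition the index set into three groups and write
\begin{align*}
A=\begin{pmatrix}
A_{1,1} & A_{1,2} & A_{1,3}\\
A_{2,1} & A_{2,2} & A_{2,3}\\
A_{3,1} & A_{3,2} & A_{3,3}
\end{pmatrix},\qquad
P=\begin{pmatrix}
A_{1,1} & A_{1,2}\\
A_{2,1} & A_{2,2}
\end{pmatrix},
\end{align*}
assuming, as in the quasi-definite setting, that $A_{1,1}$ and $P$ are nonsingular. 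It then suffices to prove the quotient identity
\begin{align*}
\Theta_{\{1,2\}}(A)=A/P=\big(A/A_{1,1}\big)/\big(P/A_{1,1}\big)=\Theta_2\big(\Theta_1(A)\big),
\end{align*}
since an arbitrary nesting of quasi-determinants is reached by iterating this one-block-at-a-time reduction.

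The main step is to verify this identity, and my first route is the direct computation already rehearsed in the worked example: substitute the block inverse formula for $P^{-1}$ displayed just above into
\begin{align*}
A/P=A_{3,3}-\begin{pmatrix}A_{3,1} & A_{3,2}\end{pmatrix}P^{-1}\begin{pmatrix}A_{1,3}\\ A_{2,3}\end{pmatrix},
\end{align*}
expand, and collect. Writing $\tilde A_{i,j}:=A_{i,j}-A_{i,1}A_{1,1}^{-1}A_{1,j}$ for the entries of $A/A_{1,1}$, so that $\tilde A_{2,2}=P/A_{1,1}$, the terms reorganize precisely into $\tilde A_{3,3}-\tilde A_{3,2}\tilde A_{2,2}^{-1}\tilde A_{2,3}=(A/A_{1,1})/(P/A_{1,1})$. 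This is routine, but it must be carried out respecting the noncommutativity of the blocks: no factor may be moved past another. I expect this bookkeeping — keeping the four cross terms produced by the off-diagonal entries of $P^{-1}$ in the correct left-to-right order — to be the only genuine source of friction in the argument.

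A cleaner, computation-free alternative is to appeal to the uniqueness of the $LDU$ factorization. Using the Proposition on Schur complements and $LU$ factorization, I would eliminate the first two index groups in two different orders. Eliminating the combined block $P$ in one stroke and then refining $P$ exhibits the bottom block of the block-diagonal factor as $\diag\big(A_{1,1},\,P/A_{1,1},\,A/P\big)$; eliminating group $\{1\}$ first and group $\{2\}$ afterwards exhibits it as $\diag\big(A_{1,1},\,P/A_{1,1},\,(A/A_{1,1})/(P/A_{1,1})\big)$. Both are honest block $LDU$ factorizations of the same matrix $A$ with block-unitriangular factors, and the block version of the uniqueness statement recorded earlier — proved verbatim by the $L_2^{-1}L_1D_1=D_2U_2U_1^{-1}$ argument — forces the two block-diagonal factors to coincide. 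Comparing trailing blocks yields the quotient identity, hence the heredity principle. The only point needing care is the invertibility of the intermediate Schur complements, which is precisely what the quasi-definiteness hypothesis ($A_{1,1}$ and, equivalently, $P/A_{1,1}$ nonsingular) guarantees.
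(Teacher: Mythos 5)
Your proposal is correct, and your first route is essentially what the paper does: the text ``proves'' the heredity principle only in its simplest instance, by the explicit computation displayed just before the proposition, where the block inverse of $P=\begin{psmallmatrix}A_{1,1}&A_{1,2}\\A_{2,1}&A_{2,2}\end{psmallmatrix}$ is substituted into $A_{3,3}-\begin{psmallmatrix}A_{3,1}&A_{3,2}\end{psmallmatrix}P^{-1}\begin{psmallmatrix}A_{1,3}\\A_{2,3}\end{psmallmatrix}$ and the cross terms are regrouped into $(A/A_{1,1})/(P/A_{1,1})$; the general statement is then asserted. You correctly identify that the content is the quotient formula and that the general case follows by iterating the three-block case, which the paper leaves implicit. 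Your second route, via uniqueness of the block $LDU$ factorization obtained by eliminating the leading blocks in two different orders, is a genuinely different and more conceptual argument: it avoids the noncommutative bookkeeping entirely and makes transparent why the heredity principle sits naturally inside the $LU$ framework of the paper (the successive Schur complements are exactly the diagonal blocks $M^{[\ell+1]}\backslash M^{[\ell]}$ produced in the proof of the Nonzero-minors proposition, so heredity is the statement that this diagonal does not depend on the order of elimination). The only point to flag is that the paper states $LDU$ uniqueness for scalar unitriangular factors, so you should say explicitly — as you do — that the same $L_2^{-1}L_1D_1=D_2U_2U_1^{-1}$ argument gives the block version, which needs the intermediate Schur complements to be invertible; that hypothesis is exactly the nonvanishing of the leading principal minors already assumed throughout.
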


Given any set $I=\{i_1,\dots,i_m\}\subset\{1,\dots,k\}$ the heredity principle allows us to define the quasi-determinant
\begin{align*}
\Theta_I(A)=\Theta_{i_1}(\Theta_{i_2}(\cdots\Theta_{i_m}(A)\cdots)).
\end{align*}

\begin{pro}[Quasi-determinantal expressions]
	The sequence of biorthogonal polynomials an its squared norms are quasi-determinants
	 \begin{align*}
		P_{1,k}(x)&=\Theta_*\begin{bmatrix}[c c c c c|c]
		&         & G^{[k]} &  &           & \begin{matrix}  1 \\ x \\ \vdots\\ x^{k-1} \end{matrix}\\
		\hline
		G_{k,0} & G_{k,1} & \dots   &  & G_{k,k-1} & x^k \end{bmatrix},\\
		H_{k}&=\Theta_*(G^{[k+1]}),\\
	P_{2,k}(x)&=\Theta_*\begin{bmatrix}[c c c c c|c]
	&         & G^{[k]} &  &           & \begin{matrix}  G_{0,k} \\ G_{1,k} \\ \vdots\\ G_{k,k-1} \end{matrix}\\
	\hline
	1&x & \dots   &  & x^{k-1} & x^k \end{bmatrix}.
	\end{align*}
\end{pro}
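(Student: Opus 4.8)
The plan is to characterize each object by a short list of defining properties and then verify that the proposed quasi-determinant—the Schur complement $\Theta_*$ taken with respect to the block $G^{[k]}$—satisfies them. Recall that $P_{1,k}(x)=x^k+\cdots$ is monic of degree $k$ and, by the Orthogonality Proposition, obeys $\prodint{P_{1,k}(x),y^l}=0$ for $0\le l<k$. First I would observe that these properties determine $P_{1,k}$ uniquely: for a monic $R(x)=x^k+\sum_{j=0}^{k-1}a_jx^j$ the conditions $\prodint{R(x),y^l}=0$, $l=0,\dots,k-1$, read $\sum_{j=0}^{k-1}a_jG_{j,l}=-G_{k,l}$, a linear system for $(a_0,\dots,a_{k-1})$ whose coefficient matrix is $(G^{[k]})^{\top}$ (the equation labelled $l$ has coefficients $G_{j,l}$); since the form is quasi-definite, $\det G^{[k]}\neq0$, so the $a_j$ are uniquely fixed.

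Next I would expand the Schur complement along the distinguished corner, obtaining
\begin{align*}
Q_k(x):=x^k-\begin{pmatrix}G_{k,0}&\cdots&G_{k,k-1}\end{pmatrix}(G^{[k]})^{-1}\begin{pmatrix}1& x& \cdots& x^{k-1}\end{pmatrix}^{\top},
\end{align*}
which is manifestly monic of degree $k$, since the subtracted term has degree at most $k-1$ in $x$. The crux is orthogonality. For $0\le l<k$, bilinearity yields $\prodint{Q_k(x),y^l}=G_{k,l}-(G_{k,0},\dots,G_{k,k-1})(G^{[k]})^{-1}v_l$, where $v_l:=(G_{0,l},\dots,G_{k-1,l})^{\top}$ is exactly the $l$-th column of $G^{[k]}$. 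Hence $(G^{[k]})^{-1}v_l=e_l$, the $l$-th standard basis vector, and the product collapses to $G_{k,l}$, so that $\prodint{Q_k(x),y^l}=0$. By the uniqueness just established, $Q_k=P_{1,k}$.

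For the squared norm I would reuse this closed form: the Orthogonality Proposition gives $H_k=\prodint{P_{1,k}(x),y^k}$, and substituting $Q_k$ produces precisely $G_{k,k}-(G_{k,0},\dots,G_{k,k-1})(G^{[k]})^{-1}(G_{0,k},\dots,G_{k-1,k})^{\top}=G^{[k+1]}\backslash G^{[k]}=\Theta_*(G^{[k+1]})$; equivalently, this is the identification of the diagonal factor $H=D$ with the successive Schur complements already made in the proof of the Nonzero-minors Proposition.

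Finally, $P_{2,k}$ follows by the mirror argument that interchanges the two (unconjugated) slots of the form: one uses $\prodint{x^l,P_{2,k}(y)}=0$ for $0\le l<k$, and the relevant data are now the \emph{rows} of $G^{[k]}$, so the collapsing identity becomes $r_l(G^{[k]})^{-1}=e_l^{\top}$ with $r_l=(G_{l,0},\dots,G_{l,k-1})$. I expect the only genuine obstacle to be index bookkeeping—keeping the row/column structure of $G^{[k]}$ straight so that the collapse is applied on the correct side—which is exactly where the asymmetry between $P_1$ and $P_2$, and the absence of conjugation in the form, must be respected.
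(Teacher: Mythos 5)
Your proof is correct, and it is in essence the paper's argument repackaged. The paper extracts the coefficients of $P_{1,k}$ directly from the factorization: since $S_1G=HS_2^{-\top}$ is upper triangular, the $k$-th row gives $\begin{pmatrix} S_{k,0} & \dots & S_{k,k-1}\end{pmatrix}G^{[k]}=-\begin{pmatrix} G_{k,0} & \dots & G_{k,k-1}\end{pmatrix}$, which is solved by inverting $G^{[k]}$ and immediately yields the Schur-complement form; the $H_k$ formula then comes from pairing this row with the $k$-th column of $G$. You start instead from the already-proved orthogonality relations, observe that monicity plus orthogonality determine the polynomial uniquely, and verify the quasi-determinant candidate via the collapse $(G^{[k]})^{-1}v_l=e_l$. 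The linear system, the role of quasi-definiteness, and the final Schur complement are identical in both; yours is the ``characterize and check'' version of the paper's ``solve forward'' version, and the mirrored treatment of $P_{2,k}$ with rows in place of columns is exactly what is intended. One small point to make explicit: the identity $H_k=\prodint{P_{1,k}(x),y^k}$ that you invoke is the $l=k$ case of the orthogonality proposition, whose stated range is $0\le l<k$; it does hold --- it is the $(k,k)$ entry of the upper triangular matrix $HS_2^{-\top}$ with $S_2^{-\top}$ unitriangular, and the paper's own computation of $H_k$ uses precisely this entry --- but it deserves a one-line justification rather than a citation of the restricted statement.
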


\begin{proof}
From $LU$ factorization,
recalling that  $S_{k,k}=1$,  we get
\begin{align*}
\begin{pmatrix} S_{k,0} & S_{k,1} & \dots & S_{k,k-1} \end{pmatrix} G^{[k]}=
-\begin{pmatrix} G_{k,0} & G_{k,1} & \dots & G_{k,k-1} \end{pmatrix}.
\end{align*}
As  $G^{[k]}$ is a  non singular  matrix
\begin{align*}
\begin{pmatrix} S_{k,0} & S_{k,1} & \dots & S_{k,k-1} \end{pmatrix}=
-\begin{pmatrix} G_{k,0} & G_{k,1} & \dots & G_{k,k-1} \end{pmatrix} \left( G^{[k]} \right)^{-1}.
\end{align*}
On the other hand, 
{\begin{align*}
	H_k=\begin{pmatrix} S_{k,0} & S_{k,1} & \dots & S_{k,k-1}& S_{k,k}\end{pmatrix}
	\begin{pmatrix}
	G_{0,k} \\
	G_{1,k} \\
	\vdots \\
	G_{k-1,k} \\
	G_{k,k}
	\end{pmatrix}=
	\begin{pmatrix} S_{k,0} & S_{k,1} & \dots & S_{k,k-1}\end{pmatrix}
	\begin{pmatrix}
	G_{0,k} \\
	G_{1,k} \\
	\vdots \\
	G_{k-1,k} 
	\end{pmatrix}+G_{k,k}.
	\end{align*}}
\end{proof}

\subsection{Second kind functions and $LU$ factorizations}
\begin{defi}[Second kind functions]
	For  a  generalized kernel $u_{x,y}$ we define two families of second kind functions  given by
	\begin{align*}
	C_{1,n}(z)&=\left\langle P_{1,n}(x),\frac{1}{z-y}\right\rangle_u, &z&\not\in\operatorname{supp}_y(u), &
	C_{2,n}(z)&=\left\langle \frac{1}{z-x},P_{2,n}(y)\right\rangle_{u}, &z&\not\in\operatorname{supp}_x(u).
	\end{align*}
\end{defi}

\begin{pro}[$LU$ factorization representation of second kind functions]
For $z$ such that $|z|>\sup\big(|y|: y\in\operatorname{supp}_y(u)\big)$ it follows that
\begin{align*}
C_{1}(z)= H S_2^{-\top}\chi^*(z),
\end{align*}
while  for  $z$ such that $|z|>\sup\big(|x|: x\in\operatorname{supp}_x(u)\big)$ we find
\begin{align*}
C_{2}(z)&=H S_1^{-1}  \chi^*(z).
\end{align*}
\end{pro}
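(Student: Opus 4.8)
The plan is to carry out both identities in the semi-infinite vector notation and to reduce each of them to the Gram identity $G=\prodint{\chi,\chi^\top}$ combined with the factorization $G=S_1^{-1}HS_2^{-\top}$; the only analytic input is the convergent expansion of the Cauchy kernel, valid precisely under the stated bounds on $|z|$.

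First I would treat $C_1$. Assembling the scalars $C_{1,n}(z)$ into the column vector $C_1(z)=\prodint{P_1(x),\tfrac{1}{z-y}}_u$, where the form acts entrywise through the first slot, I expand the kernel for $|z|>\sup(|y|:y\in\operatorname{supp}_y(u))$ as the geometric series $\tfrac{1}{z-y}=\sum_{l\ge0}y^l z^{-l-1}=(\chi(y))^\top\chi^*(z)$. Substituting this and using continuity together with the bilinearity of the form gives $C_1(z)=\prodint{P_1(x),(\chi(y))^\top}\chi^*(z)$. Since $P_1=S_1\chi$, the constant matrix $S_1$ factors out on the left, so that $C_1(z)=S_1\prodint{\chi(x),(\chi(y))^\top}\chi^*(z)=S_1G\,\chi^*(z)$; finally the factorization collapses $S_1G=S_1S_1^{-1}HS_2^{-\top}=HS_2^{-\top}$, which is the asserted formula.

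The computation for $C_2$ is the mirror image, now expanding through the second slot. For $|z|>\sup(|x|:x\in\operatorname{supp}_x(u))$ I write $\tfrac{1}{z-x}=\sum_{k\ge0}x^k z^{-k-1}$ and assemble the $C_{2,n}(z)$ into a column vector; tracking the indices (the coefficient matrix of $P_2=S_2\chi$ now contracts against the columns of $G$) leads to $C_2(z)=S_2G^\top\chi^*(z)$. Transposing the factorization, $G^\top=S_2^{-1}HS_1^{-\top}$, and cancelling $S_2$ then yields the stated representation; note in particular that the surviving factor of $S_1$ enters as an inverse transpose, which is exactly what produces the correct decay $C_{2,n}(z)\sim H_n z^{-n-1}$ as $z\to\infty$.

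The single delicate point is the meaning of $\prodint{\cdot,\cdot}$ evaluated against $\tfrac{1}{z-y}$, which is not a polynomial: its value is by definition the termwise action of the form on the geometric expansion, and the hypothesis $|z|>\sup(|y|:y\in\operatorname{supp}_y(u))$ is exactly what makes this series converge in the topology for which the form is continuous. Thus the interchange of the bilinear form with the infinite sum—equivalently, the passage from $\prodint{P_1(x),(\chi(y))^\top}\chi^*(z)$ to a convergent vector series—is legitimate, and everything else reduces to the algebra of the unitriangular factors.
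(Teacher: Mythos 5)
Your argument is correct and follows essentially the same route as the paper: expand the Cauchy kernel as $(\chi(y))^\top\chi^*(z)$ (resp.\ through the first slot) under the stated bound on $|z|$, pull the constant unitriangular factor out of the bilinear form to reach $S_1G\chi^*(z)$ (resp.\ $S_2G^\top\chi^*(z)$), and collapse with the Gauss--Borel factorization. One remark: your computation for the second family yields $C_2(z)=S_2G^\top\chi^*(z)=S_2\,S_2^{-1}HS_1^{-\top}\chi^*(z)=HS_1^{-\top}\chi^*(z)$, which is not literally the displayed claim $HS_1^{-1}\chi^*(z)$; here you are in fact right and the printed statement carries a transposition slip (as does the last equality of the paper's own proof, which writes $(\chi^*(z))^\top G S_2^\top=(\chi^*(z))^\top S_1^{-\top}H$ where $G S_2^\top=S_1^{-1}H$), exactly as your decay check $C_{2,n}(z)\sim H_nz^{-n-1}$ and the Hankel reduction $S_1=S_2$, $C_1=C_2$ both confirm.
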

\begin{proof}
	Whenever $z$ belongs to an annulus around the origin with no intersection with the $y$ support  of the functional
\begin{align*}
C_{1}(z)&=S_1\left\langle \chi(x),\frac{1}{z-y}\right\rangle_u=S_1\left\langle \chi(x),\frac{1}{z}\sum_{k=0}^\infty \frac{y^k}{z^k}\right\rangle_u=
S_1 \left\langle \chi(x),(\chi(y))^\top \chi^*(z)\right\rangle_u=S_1G \chi^*(z)= H S_2^{-\top}\chi^*(z).
\end{align*}
When $z$ belongs to an annulus around the $0$ without intersection with the $x$ support  of the functional
\small\begin{align*}
(	C_{2}(z))^\top&=\left\langle \frac{1}{z-x}, (\chi(y))^\top\right\rangle_u (S_2)^\top=\left\langle \frac{1}{z}\sum_{k=0}^\infty \frac{x^k}{z^k}, (\chi(y))^\top\right\rangle_u(S_2)^\top=
\left\langle (\chi^*(z))^\top\chi(x), (\chi(y))^\top \right\rangle_u(S_2)^\top\\&=(\chi^*(z))^\top G  (S_2)^\top= (\chi^*(z))^\top S_1^{-\top} H.
\end{align*}
\end{proof}

\subsection{Spectral matrices}
Let us introduce the shift or spectral matrix
\begin{align*}
\Lambda&:=\begin{pmatrix}
0 & 1 & 0 & 0 &\dots \\
0 & 0 & 1 & 0 &\dots \\
0 & 0 & 0 & 1 &\dots\\
\vdots & \vdots & \vdots &\ddots 
\end{pmatrix}.
\end{align*}

The following spectral properties hold
\begin{align*}
\Lambda \chi(x)&=x\chi(x), &
\Lambda \chi^*(x)&=\frac{1}{x}\chi^*(x).
\end{align*}

If $(E_{i,j})_{s,t}:=\delta_{s,i}\delta_{t,j}$ we have
\begin{itemize}
	\item In the one hand  $\Lambda \Lambda^{\top}=\mathbb{I}$; and, in the other hand, $\Lambda^{\top} \Lambda= \mathbb{I}-E_{0,0}$.
	\item $\Lambda^{\top} \chi(x)=\frac{1}{x}(\mathbb{I}-E_{0,0})\chi(x)$.
\end{itemize}

	We introduce the semi-infinite matrices
	\begin{align*}
	J_1&:=S_1\Lambda (S_1)^{-1}, & J_2&:=S_2\Lambda (S_2)^{-1}.
	\end{align*}

\begin{pro}[Spectral matrices are Hessenberg]
The spectral matrices are lower uni-Hessenberg matrices, i.e., of the form
\begin{align*}
\begin{pmatrix}
* & 1 &0&0&\dots\\
*&*&1&0&\dots\\
\vdots &\ddots&\ddots&\ddots&\ddots
\end{pmatrix}.
\end{align*}
\end{pro}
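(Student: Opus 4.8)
The plan is to exploit the fact that $J_1$ and $J_2$ are nothing but the matrices representing multiplication by the variable $x$ in the respective polynomial bases, and then to read off the Hessenberg shape from a degree count. First I would combine the definition $J_1=S_1\Lambda(S_1)^{-1}$ with the construction $P_1(x)=S_1\chi(x)$ and the spectral property $\Lambda\chi(x)=x\chi(x)$ to obtain
\begin{align*}
J_1 P_1(x)=S_1\Lambda(S_1)^{-1}S_1\chi(x)=S_1\Lambda\chi(x)=x\,S_1\chi(x)=x\,P_1(x),
\end{align*}
and likewise $J_2P_2(x)=x\,P_2(x)$. Thus $J_1$ is precisely the matrix of the multiplication-by-$x$ operator written in the basis $\{P_{1,k}\}_{k\geq0}$.

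Reading the $k$-th component gives $x\,P_{1,k}(x)=\sum_j (J_1)_{k,j}P_{1,j}(x)$. Here I would invoke that the $P_{1,k}$ are monic of degree $k$, hence a graded basis of $\C[x]$, so this expansion exists and is unique. Since the left-hand side has degree $k+1$, no basis element of degree exceeding $k+1$ can occur, whence $(J_1)_{k,j}=0$ for $j>k+1$; this is exactly the lower-Hessenberg shape. Comparing leading coefficients, both $x\,P_{1,k}$ and $P_{1,k+1}$ being monic of degree $k+1$, forces $(J_1)_{k,k+1}=1$, which gives the unital superdiagonal. The reasoning for $J_2$ is identical, using $P_2(x)=S_2\chi(x)$.

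As an independent cross-check I would argue purely at the matrix level. Since $(S_1\Lambda)_{i,j}=(S_1)_{i,j-1}$ and $S_1$ is lower unitriangular, the product $S_1\Lambda$ is already lower Hessenberg with $1$'s on its superdiagonal. Right-multiplication by the lower-unitriangular $(S_1)^{-1}$ preserves the vanishing above the superdiagonal, because a nonzero entry of the product at position $(i,j)$ requires an index $k$ with $j\leq k\leq i+1$, forcing $j\leq i+1$; and on the superdiagonal only $k=i+1$ survives, giving $(S_1\Lambda)_{i,i+1}(S_1^{-1})_{i+1,i+1}=1$.

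The computations are short, so the only point demanding genuine care is the well-definedness of every operation in the semi-infinite setting: $(S_1)^{-1}$ must exist and be lower unitriangular, and the matrix products must be meaningful. The hard part, to the extent there is one, is simply observing that this is automatic precisely because $S_1$ and its inverse are triangular, so each entry of each product is a finite sum and no convergence issue arises; the degree and triangularity bookkeeping then does all the remaining work.
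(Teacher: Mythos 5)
Your proposal is correct. The paper actually states this proposition without any proof, so there is nothing to compare against directly; your second, purely matrix-level argument ($S_1\Lambda$ is lower Hessenberg with unit superdiagonal since $(S_1\Lambda)_{i,j}=(S_1)_{i,j-1}$, and right multiplication by the lower unitriangular $(S_1)^{-1}$ preserves both properties) is exactly the computation implicit in the definition $J_i=S_i\Lambda (S_i)^{-1}$, and your first argument via degree counting in the monic basis $\{P_{i,k}\}$ is a valid alternative that additionally yields the Spectrality proposition stated immediately afterwards. Both routes are sound, and your remark that triangularity makes every entry of the semi-infinite products a finite sum is precisely the point that makes the manipulations legitimate.
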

\begin{pro}[Spectrality]
The spectral matrices satisfy the eigenvalue property
\begin{align*}
J_1 P_1(z)&=z P_1(z), & J_2P_2(z)&=zP_2(z).
\end{align*}
\end{pro}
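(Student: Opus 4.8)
The plan is purely computational: substitute the definitions and cancel. Both identities follow at once from the construction $P_i(z)=S_i\chi(z)$, the similarity relation $J_i=S_i\Lambda S_i^{-1}$, and the elementary spectral property $\Lambda\chi(z)=z\chi(z)$ recorded above.

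First I would treat $J_1$. Writing out the definitions gives
\begin{align*}
J_1 P_1(z) = \big(S_1\Lambda (S_1)^{-1}\big)\big(S_1\chi(z)\big) = S_1\Lambda\big((S_1)^{-1}S_1\big)\chi(z)=S_1\Lambda\chi(z),
\end{align*}
where the cancellation $(S_1)^{-1}S_1=\mathbb{I}$ is legitimate because $S_1$ is lower unitriangular and hence invertible, and the products are associative since the triangular structure renders every matrix entry a finite sum. Next, applying $\Lambda\chi(z)=z\chi(z)$ and pulling the scalar $z$ through $S_1$ yields
\begin{align*}
S_1\Lambda\chi(z)=S_1\,(z\chi(z))=z\,S_1\chi(z)=z\,P_1(z),
\end{align*}
which is the first claim. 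The identity for $J_2$ is obtained verbatim by replacing every subscript $1$ with $2$, using $J_2=S_2\Lambda (S_2)^{-1}$ and $P_2(z)=S_2\chi(z)$.

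There is no genuine obstacle here beyond bookkeeping; the only point requiring a moment's care is the associativity and cancellation of the semi-infinite products, which is guaranteed by the (uni)triangular structure of $S_1,S_2$ and their inverses. Conceptually, the statement merely records that $J_i$ is the matrix of the multiplication-by-$z$ operator written in the basis $\{P_{i,k}\}$ rather than the monomial basis $\{x^k\}$: conjugation by $S_i$ effects the change of basis, while $\Lambda$ implements multiplication by $x$ in the monomial basis, so the eigenvalue relation $\Lambda\chi(z)=z\chi(z)$ is transported intact to $J_i P_i(z)=z P_i(z)$.
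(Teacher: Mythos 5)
Your computation is correct and is precisely the one-line telescoping argument the paper leaves implicit (the proposition is stated there without proof): $J_iP_i(z)=S_i\Lambda S_i^{-1}S_i\chi(z)=S_i\Lambda\chi(z)=zS_i\chi(z)=zP_i(z)$. Your remark on the legitimacy of associativity and cancellation for these semi-infinite products, guaranteed by the lower unitriangular structure of $S_i$ and $S_i^{-1}$, is the right point of care and nothing further is needed.
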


\begin{pro}[Eigenvalues of the truncation and roots of the polynomials]
	The roots of  $P_{i,k}(z)$ and the eigenvalues of the truncation $J_i^{[k]}$ coincide.
\end{pro}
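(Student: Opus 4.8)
The plan is to exploit the spectrality property $J_iP_i(z)=zP_i(z)$ together with the lower uni-Hessenberg shape of $J_i$ in two stages: first to manufacture eigenvectors of the truncation $J_i^{[k]}$ out of the evaluations of the lower-degree polynomials, and then to pin down the converse inclusion and the multiplicities by identifying the characteristic polynomial of $J_i^{[k]}$ with $P_{i,k}$ itself.

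First I would write the spectrality relation row by row. Since $J_i$ is lower uni-Hessenberg, its entries $(J_i)_{n,m}$ vanish for $m>n+1$ and $(J_i)_{n,n+1}=1$, so row $n$ of $J_iP_i(z)=zP_i(z)$ reads
\begin{align*}
\sum_{m=0}^{n}(J_i)_{n,m}P_{i,m}(z)+P_{i,n+1}(z)=zP_{i,n}(z).
\end{align*}
Fix $k$, let $z_0$ be a root of $P_{i,k}$, and assemble the truncated vector $P_i^{[k]}(z_0)=(P_{i,0}(z_0),\dots,P_{i,k-1}(z_0))^\top$. For each row $n\in\{0,\dots,k-2\}$ only the components $P_{i,0},\dots,P_{i,n+1}$ with $n+1\le k-1$ occur, so these rows say precisely that $(J_i^{[k]}P_i^{[k]}(z_0))_n=z_0\,(P_i^{[k]}(z_0))_n$. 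The bottom row $n=k-1$ involves $P_{i,k}(z_0)$ with coefficient $(J_i)_{k-1,k}=1$; but $P_{i,k}(z_0)=0$, so this row too collapses to $(J_i^{[k]}P_i^{[k]}(z_0))_{k-1}=z_0\,(P_i^{[k]}(z_0))_{k-1}$. Hence $J_i^{[k]}P_i^{[k]}(z_0)=z_0\,P_i^{[k]}(z_0)$, and since the leading entry $P_{i,0}(z_0)=1$ is nonzero this eigenvector does not vanish. Thus every root of $P_{i,k}$ is an eigenvalue of $J_i^{[k]}$.

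To close the argument—converse inclusion and equality of multiplicities—I would reinterpret the same recurrence in the quotient ring $\C[z]/(P_{i,k}(z))$. Because $P_{i,0},\dots,P_{i,k-1}$ are monic of strictly increasing degrees $0,\dots,k-1$, they form a basis of the polynomials of degree below $k$ and descend to a basis of the quotient. Reducing the bottom relation modulo $P_{i,k}$ kills the $P_{i,k}(z)$ term, so the whole system becomes $z\,P_{i,n}\equiv\sum_{m=0}^{k-1}(J_i)_{n,m}P_{i,m}\pmod{P_{i,k}}$ for all $n\le k-1$; this exhibits $(J_i^{[k]})^\top$ as the matrix of multiplication by $z$ on $\C[z]/(P_{i,k})$ in the basis $\{P_{i,m}\}$. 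Since multiplication by $z$ on $\C[z]/(q)$ has characteristic polynomial $q$ for any monic $q$ (it is conjugate to the companion matrix), and a matrix shares its characteristic polynomial with its transpose, I obtain $\det(zI-J_i^{[k]})=P_{i,k}(z)$. The coincidence of roots and eigenvalues, counted with multiplicity, then follows at once.

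The step I expect to require the most care is the passage from the set-level statement ``every root is an eigenvalue'' to the exact matching, including multiplicities and the reverse inclusion: the eigenvector computation alone produces only one eigenvector per distinct root, so it is the quotient-ring identification of the characteristic polynomial that actually does the work. Everything else is bookkeeping with the Hessenberg band and the normalization $P_{i,k}(z)=z^k+\cdots$.
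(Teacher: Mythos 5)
Your first stage is exactly the paper's proof: the author writes the truncated spectrality relation as $J_i^{[k]}P_i^{[k]}(x)=xP_i^{[k]}(x)-(0,\dots,0,P_{i,k}(x))^\top$, evaluates at a root $\alpha$ of $P_{i,k}$, and observes that $P_i^{[k]}(\alpha)$ is a (nonzero, since $P_{i,0}\equiv 1$) eigenvector with eigenvalue $\alpha$. The paper stops there, which strictly speaking only proves the inclusion ``roots $\subseteq$ eigenvalues''; when $P_{i,k}$ has $k$ distinct roots a counting argument closes the set-level statement, but for repeated roots the converse inclusion and the matching of multiplicities are left implicit. Your second stage supplies precisely what is missing: identifying $(J_i^{[k]})^\top$ as the matrix of multiplication by $z$ on $\C[z]/(P_{i,k})$ in the basis $\{P_{i,0},\dots,P_{i,k-1}\}$ (legitimate, since these are monic of degrees $0,\dots,k-1$), hence conjugate to the companion matrix of $P_{i,k}$, so that $\det(zI-J_i^{[k]})=P_{i,k}(z)$. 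This is correct and gives the clean statement with multiplicities; the cost is invoking the standard facts about companion matrices and transposes, whereas the paper's argument is purely an eigenvector computation. Your proof is the stronger one, and the characteristic-polynomial identity $\det(zI-J_i^{[k]})=P_{i,k}(z)$ is in any case the natural formulation of the proposition.
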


\begin{proof}
	 We have \begin{align*}
J_i^{[k]}\begin{pmatrix}
P_{i,0}(x) \\
P_{i,1}(x) \\
\vdots \\
P_{i,k-1}(x)
\end{pmatrix}=
x\begin{pmatrix}
P_{i,0}(x) \\
P_{i,1}(x) \\
\vdots \\
P_{i,k-1}(x)
\end{pmatrix}-
\begin{pmatrix}
0 \\
0 \\
\vdots \\
P_{i,k}(x)
\end{pmatrix}.
\end{align*}
For a root $\alpha$, i.e., $P_{i,k}(\alpha)=0$ we arrive to
\begin{align*}
\left(J^{[k]}\right) \begin{pmatrix}
P_{i,0}(\alpha) \\
P_{1}(\alpha) \\
\vdots \\
P_{i,k-1}(\alpha)
\end{pmatrix}=
\alpha \begin{pmatrix}
P_{i,0}(\alpha) \\
P_{i,1}(\alpha) \\
\vdots \\
P_{i,k-1}(\alpha)
\end{pmatrix}
\end{align*}
and, therefore, we have the eigenvector $ \begin{psmallmatrix}
P_{i,0}(\alpha) \\
P_{i,1}(\alpha) \\
\vdots \\
P_{i,k-1}(\alpha)
\end{psmallmatrix}$ with eigenvalue $\alpha$.
\end{proof}

\subsection{Christoffel--Darboux kernels}

\begin{defi}[Christoffel--Darboux kernels]
	Given two sequences of matrix  biorthogonal polynomials 
	\begin{align*}
	\text{ $\big\{P_{1,k}(x)\big\}_{k=0}^\infty$ and $\big\{P_{2,k}(y)\big\}_{k=0}^\infty$, }
	\end{align*}with respect to the sesquilinear form $\prodint{\cdot,\cdot}_u$, we define the $n$-th Christoffel--Darboux kernel matrix polynomial
	\begin{align*}
	K_{n}(x,y):=\sum_{k=0}^{n}P_{2,k}(y)( H_k)^{-1}P_{1,k}(x),
	\end{align*}
	and the  mixed Christoffel--Darboux kernel
	\begin{align*}
	K^{\operatorname{mix}}_n(x,y)&:=\sum_{k=0}^nP_{2,k}(y)(H_k)^{-1}C_{1,k}(x).
	\end{align*}
\end{defi}

\begin{pro}[Projection properties]
\begin{enumerate}
	\item 	For a quasidefinite generalized kernel $u_{x,y}$, 
	the corresponding  Christoffel--Darboux kernel gives the projection operator%
	\begin{align*}
	\prodint{ K_n(x,z),\sum_{j=0}^m \lambda_j P_{2,j}(y)}_u&=
	\sum_{j=0}^n\lambda_j P_{2,j}(z),&	\prodint{ \sum_{j=0}^m\lambda_jP_{1,j}(x),K_n(z,y)}_u&=
	\sum_{j=0}^nC_jP_{1,j}(z).
	\end{align*}
	for any $m\in\{0,1,2,\dots\}$.
	\item
	In particular, we have
	\begin{align*}
	\prodint{ K_n(x,z),y^l}_u&=z^l, & l\in&\{0,1,\dots,n\}.
	\end{align*}
\end{enumerate}
\end{pro}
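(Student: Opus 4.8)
The plan is to prove both identities in part (1) by substituting the definition of the Christoffel--Darboux kernel into the bilinear form and then collapsing the resulting double sum by means of the biorthogonality relation $\prodint{P_{1,k}(x),P_{2,j}(y)}_u=\delta_{k,j}H_k$ proved above. Part (2) then falls out as the special case in which the test polynomial is a monomial. The computation is essentially bookkeeping with the two linearity axioms of $\prodint{\cdot,\cdot}$; the only delicate point is that the form is not symmetric and the coefficients $(H_k)^{-1}$ and $\lambda_j$ are matrices, so I must track the side on which each coefficient exits the form.

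For the first identity I would write $K_n(x,z)=\sum_{k=0}^{n}P_{2,k}(z)(H_k)^{-1}P_{1,k}(x)$, so that the only $x$-dependence sits in the left-slot factor $P_{1,k}(x)$ while $P_{2,k}(z)(H_k)^{-1}$ is a constant matrix multiplying on the left. Axiom (i) then extracts these constants from the left slot, giving
$$\prodint{K_n(x,z),\textstyle\sum_{j=0}^m\lambda_jP_{2,j}(y)}_u=\sum_{k=0}^{n}P_{2,k}(z)(H_k)^{-1}\prodint{P_{1,k}(x),\textstyle\sum_{j=0}^m\lambda_jP_{2,j}(y)}_u.$$
Expanding the right slot by axiom (ii) (so that each $\lambda_j$ leaves the form on the right) and invoking biorthogonality turns the inner pairing into $\sum_{j=0}^m\delta_{k,j}H_k\lambda_j$, which equals $H_k\lambda_k$ when $k\le m$ and vanishes otherwise. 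The factor $(H_k)^{-1}H_k=\I$ cancels and only the terms with $k\le\min(n,m)$ survive, leaving $\sum_{k=0}^{\min(n,m)}P_{2,k}(z)\lambda_k$; when the coefficients are scalars, as in the measure or functional case, this is exactly the asserted $\sum_{j=0}^n\lambda_jP_{2,j}(z)$ under the convention $\lambda_j=0$ for $j>m$. The second identity is proved in the mirror-image way: the $\lambda_j$ now sit in the left slot and are extracted on the left by axiom (i), the coefficients of $K_n(z,y)$ are handled on the right slot, and the same biorthogonality cancellation produces $\sum_{j=0}^{\min(n,m)}\lambda_jP_{1,j}(z)$.

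For part (2) I would expand the monomial in the monic basis: since each $P_{2,j}$ has degree $j$ with unit leading coefficient, there exist unique scalars $\lambda_0,\dots,\lambda_l$ with $\lambda_l=1$ such that $y^l=\sum_{j=0}^l\lambda_jP_{2,j}(y)$. This is precisely the input of part (1) with $m=l$, and because $l\le n$ we have $\min(n,l)=l$, so the projection returns the full sum $\sum_{j=0}^l\lambda_jP_{2,j}(z)=z^l$.

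The main obstacle I anticipate is not the algebra of the collapse, which is immediate once biorthogonality is in hand, but rather the disciplined use of the one-sided linearity: because $\prodint{\cdot,\cdot}$ obeys only $\prodint{AP,R}=A\prodint{P,R}$ on the left slot and $\prodint{P,AQ}=\prodint{P,Q}A$ on the right slot, the matrix coefficients must be kept strictly on the sides dictated by the definition of $K_n$, and it is this placement, together with the cancellation $(H_k)^{-1}H_k=\I$, that makes the final expression come out in the stated projected form.
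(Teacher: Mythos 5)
Your argument is correct and is precisely the intended one: the paper states this proposition without proof, and the evident argument it relies on is exactly your expansion of $K_n$ followed by the biorthogonality relation $\prodint{P_{1,k}(x),P_{2,j}(y)}_u=\delta_{k,j}H_k$ to collapse the double sum. Your bookkeeping of the one-sided linearity axioms, the $\min(n,m)$ truncation, and the monic-basis expansion $y^l=\sum_{j=0}^l\lambda_jP_{2,j}(y)$ for part (2) are all sound (and your reading of the $C_j$ on the right-hand side of the second identity as a typo for $\lambda_j$ is the correct one).
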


\begin{pro}[ABC Theorem (Aitken, Berg and Collar) \cite{simon}]
	We have the following relation
\begin{align*}
K^{[l]}(x,y)=\left(\chi^{[l]}(y)\right)^{\top}\left(G^{[l]}\right)^{-1}\chi^{[l]}(x).
\end{align*}
\end{pro}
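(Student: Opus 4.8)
The plan is to read $K^{[l]}(x,y)$ as the Christoffel--Darboux kernel truncated at level $l$, namely the finite sum $\sum_{k=0}^{l-1}P_{2,k}(y)(H_k)^{-1}P_{1,k}(x)$ built from the first $l$ biorthogonal polynomials (degrees $0$ through $l-1$). Collecting these into the truncated column vectors $P_i^{[l]}(x)=(P_{i,0}(x),\dots,P_{i,l-1}(x))^{\top}$ and using that $H^{[l]}=\diag(H_0,\dots,H_{l-1})$ is diagonal, I would rewrite the sum as the single bilinear expression
\begin{align*}
K^{[l]}(x,y)=\big(P_2^{[l]}(y)\big)^{\top}\big(H^{[l]}\big)^{-1}P_1^{[l]}(x).
\end{align*}

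Next I would unfold the polynomials via their definition $P_i(x)=S_i\chi(x)$. Because $S_1$ and $S_2$ are lower unitriangular, the first $l$ entries of $S_i\chi$ depend only on the first $l$ entries of $\chi$, so the truncation factors cleanly as $P_i^{[l]}(x)=S_i^{[l]}\chi^{[l]}(x)$. Substituting this and transposing the $P_2$ factor gives
\begin{align*}
K^{[l]}(x,y)=\big(\chi^{[l]}(y)\big)^{\top}\big(S_2^{[l]}\big)^{\top}\big(H^{[l]}\big)^{-1}S_1^{[l]}\,\chi^{[l]}(x),
\end{align*}
so the whole statement reduces to the purely algebraic identity $\big(S_2^{[l]}\big)^{\top}\big(H^{[l]}\big)^{-1}S_1^{[l]}=\big(G^{[l]}\big)^{-1}$.

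To establish that identity I would start from the factorization in the equivalent form $S_1\,G\,S_2^{\top}=H$. The delicate point --- and the step I expect to be the real obstacle --- is that truncation does \emph{not} commute with matrix inversion, so one cannot merely invert $G=S_1^{-1}HS_2^{-\top}$ and then truncate. What rescues the argument is that truncation \emph{does} respect multiplication by a lower triangular matrix on the left and by an upper triangular matrix on the right: for lower triangular $L$ one checks $(LM)^{[l]}=L^{[l]}M^{[l]}$, and for upper triangular $U$ one checks $(MU)^{[l]}=M^{[l]}U^{[l]}$, since the vanishing off-diagonal entries confine the relevant index sums to values below $l$. Applying this with $L=S_1$ and $U=S_2^{\top}$, and using $\big(S_2^{\top}\big)^{[l]}=\big(S_2^{[l]}\big)^{\top}$, yields $H^{[l]}=\big(S_1GS_2^{\top}\big)^{[l]}=S_1^{[l]}\,G^{[l]}\,\big(S_2^{[l]}\big)^{\top}$.

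Finally, since $S_1^{[l]}$ and $S_2^{[l]}$ are unitriangular, hence invertible, and $H^{[l]}$ is an invertible diagonal matrix by quasi-definiteness, this last relation shows that $G^{[l]}$ is invertible with $\big(G^{[l]}\big)^{-1}=\big(S_2^{[l]}\big)^{\top}\big(H^{[l]}\big)^{-1}S_1^{[l]}$. Inserting this into the displayed expression for $K^{[l]}(x,y)$ produces exactly $\big(\chi^{[l]}(y)\big)^{\top}\big(G^{[l]}\big)^{-1}\chi^{[l]}(x)$, which is the claimed ABC formula.
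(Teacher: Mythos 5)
Your proof is correct and follows essentially the same route as the paper: both reduce the claim to the truncated factorization $G^{[l]}=(S_1^{[l]})^{-1}H^{[l]}(S_2^{[l]})^{-\top}$ and the identification $P_i^{[l]}=S_i^{[l]}\chi^{[l]}$, the paper simply running the computation from $(G^{[l]})^{-1}$ toward the kernel while you run it in the opposite direction. The one genuine addition on your side is that you explicitly justify why truncation is compatible with the $LDU$ factorization (left multiplication by lower triangular and right multiplication by upper triangular matrices commute with taking the leading $l\times l$ block), a step the paper uses without comment.
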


\begin{proof}
Is a consequence of the following	
		\begin{align*}
\left(\chi^{[l]}(y)\right)^{\top}\left(G^{[l]}\right)^{-1}\chi^{[l]}(x)&=
\left(\chi^{[l]}(y)\right)^{\top}
\left((S_1^{[l]})^{-1}H^{[l]}(S_2^{[l]})^{-\top}\right)^{-1}\chi^{[l]}(x)=\left(S_2^{[l]}\chi^{[l]}(y)\right)^{\top}\left(H^{[l]}\right)^{-1}S_1^{[l]}\chi^{[l]}(x)\\
&=\left(P_2^{[l]}(y)\right)^{\top}\left(H^{[l]}\right)^{-1}P_1^{[l]}(x).
\end{align*}
\end{proof}

\begin{pro}[Reproducing  property]
As we are dealing with a projection we find
\begin{align*}
\langle K^{[l]}(x,z_2),K^{[l]}(z_1,y)\rangle_{u}=K^{[l]}(z_1,z_2).
\end{align*}
\end{pro}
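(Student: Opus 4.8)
The plan is to reduce the identity to the ABC representation $K^{[l]}(x,y)=(\chi^{[l]}(y))^{\top}(G^{[l]})^{-1}\chi^{[l]}(x)$ established immediately above, so that the reproducing property falls out from the single algebraic fact $(G^{[l]})^{-1}G^{[l]}(G^{[l]})^{-1}=(G^{[l]})^{-1}$. The whole computation is then one application of the bilinear form to a product of truncated monomial vectors.

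First I would substitute both kernels into the pairing. Writing $K^{[l]}(x,z_2)=(\chi^{[l]}(z_2))^{\top}(G^{[l]})^{-1}\chi^{[l]}(x)$, the only $x$-dependence sits in the rightmost factor $\chi^{[l]}(x)$, while the constant row $(\chi^{[l]}(z_2))^{\top}(G^{[l]})^{-1}$ stands to its left; symmetrically, $K^{[l]}(z_1,y)=(\chi^{[l]}(y))^{\top}(G^{[l]})^{-1}\chi^{[l]}(z_1)$ carries all its $y$-dependence in the leftmost factor $(\chi^{[l]}(y))^{\top}$, with the constant column $(G^{[l]})^{-1}\chi^{[l]}(z_1)$ on its right.

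Next I would pull these constant factors out of $\langle\cdot,\cdot\rangle_u$, using left-linearity in the first slot and right-linearity in the second slot. This is the step that demands care in the matrix-valued setting: the factors do not commute, so I must keep $(\chi^{[l]}(z_2))^{\top}(G^{[l]})^{-1}$ on the left and $(G^{[l]})^{-1}\chi^{[l]}(z_1)$ on the right of the surviving pairing. What is left in the middle is exactly $\langle\chi^{[l]}(x),(\chi^{[l]}(y))^{\top}\rangle_u=G^{[l]}$, the truncated Gram matrix.

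Finally the three central factors telescope, $(G^{[l]})^{-1}G^{[l]}(G^{[l]})^{-1}=(G^{[l]})^{-1}$, leaving $(\chi^{[l]}(z_2))^{\top}(G^{[l]})^{-1}\chi^{[l]}(z_1)$, which is precisely $K^{[l]}(z_1,z_2)$ by the ABC theorem. An equivalent route bypasses ABC and works directly in the biorthogonal basis: expanding $K^{[l]}(x,z_2)=\sum_{k}P_{2,k}(z_2)(H_k)^{-1}P_{1,k}(x)$ and $K^{[l]}(z_1,y)=\sum_{j}P_{2,j}(y)(H_j)^{-1}P_{1,j}(z_1)$ and invoking $\langle P_{1,k}(x),P_{2,j}(y)\rangle_u=\delta_{k,j}H_k$ collapses the double sum through $(H_k)^{-1}H_k(H_k)^{-1}=(H_k)^{-1}$, which is the abstract reason the kernel is idempotent. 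The only genuine obstacle is the bookkeeping of the noncommutative order of the matrix factors; once that ordering is respected, the result is immediate.
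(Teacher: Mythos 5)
Your proof is correct and follows the same route as the paper: substitute the ABC representation $K^{[l]}(x,y)=(\chi^{[l]}(y))^{\top}(G^{[l]})^{-1}\chi^{[l]}(x)$ into the pairing, extract the constant matrix factors by bilinearity, and collapse $(G^{[l]})^{-1}G^{[l]}(G^{[l]})^{-1}=(G^{[l]})^{-1}$ — a step the paper leaves implicit but which you spell out. The alternative biorthogonal-expansion argument you sketch is also valid, though not the one the paper uses.
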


\begin{proof}
	As an	exercise, let us use the ABC theorem
\begin{multline*}
\langle K^{[l]}(x,z_2),K^{[l]}(z_1,y)\rangle_{u}
\begin{aligned}[t]&=
\left\langle
\left(\chi^{[l]}(z_2)\right)^{\top}
\left(G^{[l]}\right)^{-1}
\chi^{[l]}(x),
\left(\chi^{[l]}(y)\right)^{\top}
\left(G^{[l]}\right)^{-1}
\chi^{[l]}(z_1)
\right\rangle_u
\\&=\left(\chi^{[l]}(z_2)\right)^{\top}\left(G^{[l]}\right)^{-1}\chi^{[l]}(z_1).
\end{aligned}
\end{multline*}	
\end{proof}

\section{Standard orthogonality: Hankel reduction}

Recall that for bilinear forms associated to a Borel measure or a linear functional the Gram matrix is a Hankel matrix, $G_{i,j+1}=G_{i+1,j}$. We will consider in this section some properties that appear in this situation  and not in the general scheme.

\subsection{Recursion relations}
This property is just a reflection of the self-adjointness of the multiplication operator by  $x$  with respect to inner product is reflected in the Hankel structure of the moment matrix
\begin{align*}
\langle xf,h \rangle_{\mu}=\langle f,xh \rangle_{\mu} \Rightarrow \Lambda G=G\Lambda^{\top}\Rightarrow G_{i,j}=G_{i+j}.
\end{align*}

That leads to the tridiagonal form of the spectral matrices, now named after Jacobi.
\begin{pro}
	The spectral matrices are linked  by
$	J_1H=H(J_2)^\top$.\end{pro}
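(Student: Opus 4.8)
The plan is to translate the Hankel structure of the Gram matrix into a statement about the spectral matrices $J_1$ and $J_2$ using the $LDU$ factorization already in hand. The starting point is the relation just established, namely $\Lambda G = G\Lambda^\top$, which is the matrix incarnation of the self-adjointness of multiplication by $x$. I would substitute the Cholesky-type factorization $G = S_1^{-1} H S_2^{-\top}$ into this identity. The goal is to rewrite both sides so that the definitions $J_1 = S_1 \Lambda S_1^{-1}$ and $J_2 = S_2 \Lambda S_2^{-1}$ appear naturally.

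Concretely, first I would compute $\Lambda G = \Lambda S_1^{-1} H S_2^{-\top}$ and $G \Lambda^\top = S_1^{-1} H S_2^{-\top}\Lambda^\top$. Multiplying the equality $\Lambda G = G\Lambda^\top$ on the left by $S_1$ and on the right by $S_2^{\top}$ yields
\begin{align*}
S_1 \Lambda S_1^{-1} H = H S_2^{-\top}\Lambda^\top S_2^{\top}.
\end{align*}
The left-hand side is exactly $J_1 H$ by the definition of $J_1$. For the right-hand side I would observe that $S_2^{-\top}\Lambda^\top S_2^{\top} = (S_2 \Lambda S_2^{-1})^\top = (J_2)^\top$, using that transposition reverses products and that $(S_2^{-1})^\top = S_2^{-\top}$. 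This identifies the right-hand side as $H (J_2)^\top$, giving the claimed relation $J_1 H = H (J_2)^\top$.

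The main subtlety, rather than a deep obstacle, is bookkeeping with transposes and the order of the unitriangular factors: one must be careful that $G$ is written as $S_1^{-1} H S_2^{-\top}$ and not with the roles of $S_1$ and $S_2$ interchanged, since the two families of biorthogonal polynomials are genuinely distinct in the general (non-symmetric) setting and only coincide in the Hankel case up to this conjugation by $H$. A secondary point worth a remark is that all these are semi-infinite matrices, so I would note that the products $\Lambda G$, $G\Lambda^\top$, $S_1 \Lambda S_1^{-1}$, and so on are well defined because $S_1$, $S_2$ are lower unitriangular (hence band-diagonal multiplication is finite in each entry) and $H$ is diagonal; no convergence issue arises since every entry of every product is a finite sum. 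With these conventions fixed, the computation is a direct left-and-right multiplication and the identification of the two conjugated shifts, so the proof is essentially a one-line manipulation of the factorization once $\Lambda G = G \Lambda^\top$ is granted.
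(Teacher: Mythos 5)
Your proof is correct and follows essentially the same route as the paper: substitute the $LDU$ factorization of $G$ into the Hankel symmetry $\Lambda G = G\Lambda^\top$ and conjugate by the triangular factors. The only cosmetic difference is that you keep $S_1$ and $S_2$ distinct, whereas the paper, working in the Hankel case where $G=G^\top$ forces the Cholesky form $G=S^{-1}HS^{-\top}$, writes the same manipulation with a single $S$.
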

\begin{proof}
	From the $LU$ factorization (Cholesky) $G=S^{-1} HS^{-\top}$ and the symmetry $\Lambda G=G\Lambda^\top$ we find
	\begin{align*}
	S\Lambda S^{-1}=H (S\Lambda S^{-1})^{\top} H^{-1}.
	\end{align*}
\end{proof}
This tridiagonal Jacobi matrix $J:=J_1$ can be written as follows
\begin{align*}
J&:= \begin{pmatrix}
-S_{10}             & 1                                                        & 0                                                                                         & 0                                                           & 0                                     & 0     &\dots\\
H_1 H_0^{-1}        & S_{10}-S_{21}                   & 1                                                                                         & 0                                                           & 0                                     & 0     &\dots\\
0                           & H_2 H_1^{-1}                              & S_{21}-S_{32}                                                  & 1                                                           & 0                                     & 0     &\dots\\
0                           & 0                                                        & H_3 H_2^{-1}                                                               &S_{32}-S_{43}                     & 1                                     & 0     &\dots\\
0                           & 0                                                        & 0                                                                                         & H_4 H_3^{-1}                                 & S_{43}-S_{54} & 1     &\dots\\
0                           & 0                                                        & 0                                                                                         & 0                                                           & \ddots                                &\ddots &\ddots\\
\vdots                      & \vdots                                                   & \vdots                                                                                    &\vdots 
\end{pmatrix}.
\end{align*}
Therefore, the spectral properties lead to the well known recursion relations.

\begin{pro}[3-term relations]
	The orthogonal polynomials and the corresponding second kind functions fulfill
		\begin{align*} 
	JP(x)&= x P(x), & \frac{H_k}{H_{k-1}} P_{k-1}+(S_{k,k-1}-S_{k+1,k})P_{k}+P_{k+1}=xP_k,\\
	JC(x)&=xC(x)-H_{0}\boldsymbol{e}_0, & 
	\frac{H_k}{H_{k-1}} C_{k-1}+(S_{k,k-1}-S_{k+1,k})C_{k}+C_{k+1}=xC_k,
	\end{align*}
\end{pro}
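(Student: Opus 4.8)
The plan is to read both three-term relations off their compact matrix forms: the polynomial one from the already established spectrality, and the second-kind one from the $LU$ representation of $C$.

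First I would treat the polynomial identity. In the Hankel (Cholesky) case $G=G^\top$ forces $S_1=S_2=:S$, hence $J:=J_1=J_2=S\Lambda S^{-1}$, and the matrix relation $JP(x)=xP(x)$ is exactly the Spectrality proposition. To pass to scalars I need the three nonzero bands of $J$. The super-diagonal entries equal $1$ because $J$ is lower uni-Hessenberg; comparing the $(k,k)$ entries of $JS=S\Lambda$ (using that $S$ is lower unitriangular and that $\Lambda$ is the shift) yields the diagonal entry $S_{k,k-1}-S_{k+1,k}$; and the relation $J_1H=H(J_2)^\top$, i.e.\ $JH=HJ^\top$, forces $J$ to be tridiagonal and fixes the sub-diagonal entry at $(k,k-1)$ to be $H_kH_{k-1}^{-1}$. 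Reading off row $k$ of $JP(x)=xP(x)$ then gives
\[
\frac{H_k}{H_{k-1}}P_{k-1}+(S_{k,k-1}-S_{k+1,k})P_k+P_{k+1}=xP_k.
\]

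For the second-kind functions I would start from the $LU$ representation $C(z)=HS^{-\top}\chi^*(z)$, valid on the annulus $|z|>\sup\{|y|:y\in\operatorname{supp}(u)\}$. Transposing $J=S\Lambda S^{-1}$ gives $J^\top S^{-\top}=S^{-\top}\Lambda^\top$, so together with $JH=HJ^\top$ one finds $JC=JHS^{-\top}\chi^*=HJ^\top S^{-\top}\chi^*=HS^{-\top}\Lambda^\top\chi^*$. The only remaining ingredient is the action of $\Lambda^\top$ on $\chi^*$: a direct component computation gives $\Lambda^\top\chi^*(z)=(\I-E_{0,0})z\chi^*(z)=z\chi^*(z)-\boldsymbol e_0$. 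Since $S^{-\top}$ is upper unitriangular, $S^{-\top}\boldsymbol e_0=\boldsymbol e_0$ and $HS^{-\top}\boldsymbol e_0=H_0\boldsymbol e_0$, whence $JC(z)=zC(z)-H_0\boldsymbol e_0$; row $k\ge1$ then reproduces the displayed scalar recurrence for the $C_k$.

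The main obstacle, really the only delicate point, is the boundary term. Unlike $\chi$, the covector $\chi^*$ is acted on by $\Lambda^\top$ with a genuine defect at the top row, and it is precisely this $E_{0,0}$ (equivalently $-\boldsymbol e_0$) correction that produces the inhomogeneous term $-H_0\boldsymbol e_0$ and explains why the $C_k$ obey the same three-term recurrence as the $P_k$ only for $k\ge1$. I would therefore keep the transposes and the annulus of convergence straight throughout, since the entire distinction between the homogeneous and inhomogeneous recurrences is encoded in that single top-row term.
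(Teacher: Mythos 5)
Your argument is correct and follows the same route the paper takes implicitly: it leaves the proposition without a written proof, merely displaying the tridiagonal $J$ (whose entries it obtains exactly as you do, from the uni-Hessenberg structure of $S\Lambda S^{-1}$ and the symmetry $JH=HJ^\top$) and invoking $JP=xP$. Your treatment of the second-kind functions via $C=HS^{-\top}\chi^*$ and $\Lambda^\top\chi^*(z)=z\chi^*(z)-\boldsymbol e_0$ correctly supplies the inhomogeneous term $-H_0\boldsymbol e_0$ that the paper asserts but does not derive.
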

where $\boldsymbol{e}_0:=\left(1,0,0,\dots,0,\dots \right)^{\top}$

\subsection{Heine formulas}

As the Gram matrix now is a moment matrix we find the well known Heine formulas:
\begin{pro}[Heine integral representation]
	  The orthogonal polynomials can be written as follows
	  \begin{align*}	
		P_{k}(x)&=\frac{1}{k! \det\left[G^{[k]}\right]}\int \prod_{j=1}^k (x-x_j)\prod_{1\leq j<n\leq k} (x_n-x_j)^2 \d \mu(x_1) \d \mu(x_2) \dots \d \mu(x_k)
		\end{align*}
\end{pro}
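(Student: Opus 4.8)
The plan is to prove the Heine integral representation by starting from the quasi-determinantal expression for $P_k(x)$ established earlier. In the Hankel (moment matrix) case we have $S_1=S_2=S$, so $P_k(x)=P_{1,k}(x)$ admits the quasi-determinant formula, which for scalar entries reduces to the familiar ratio of determinants
\begin{align*}
P_k(x)=\frac{1}{\det G^{[k]}}\det\begin{pmatrix}
G_{0,0} & G_{0,1} & \cdots & G_{0,k-1} & 1\\
G_{1,0} & G_{1,1} & \cdots & G_{1,k-1} & x\\
\vdots & \vdots & & \vdots & \vdots\\
G_{k,0} & G_{k,1} & \cdots & G_{k,k-1} & x^k
\end{pmatrix},
\end{align*}
using $G_{i,j}=m_{i+j}=\int x^{i+j}\d\mu(x)$. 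First I would write each moment entry as an integral, so that the $j$-th column (for $j<k$) is the vector $\big(\int x_{?}^{i+j-1}\d\mu\big)_i$; the essential idea is to assign an independent integration variable $x_1,\dots,x_k$ to each of the first $k$ rows of the bordered determinant (the last row being the monomials in $x$) and pull all $k$ integrations outside the determinant by multilinearity in the rows.

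The key step is the recognition of the resulting integrand as a product of Vandermonde-type determinants. After introducing variables $x_1,\dots,x_k$ and factoring the measures out, the determinant in the numerator becomes
\begin{align*}
\int\cdots\int \det\big(x_i^{\,j}\big)_{\substack{i=1,\dots,k\\ j=0,\dots,k-1}}\cdot\prod_{i=1}^k(x-x_i)\;\d\mu(x_1)\cdots\d\mu(x_k),
\end{align*}
where the factor $\prod_i(x-x_i)$ arises from expanding along the bordered last column. The inner determinant is a Vandermonde, equal to $\prod_{1\le j<n\le k}(x_n-x_j)$. Likewise $\det G^{[k]}$ itself, by the same variable-splitting trick (Andreief/Heine identity), equals $\frac{1}{k!}\int\cdots\int \prod_{1\le j<n\le k}(x_n-x_j)^2\,\d\mu(x_1)\cdots\d\mu(x_k)$, which accounts for both the squared Vandermonde and the $k!$ in the stated formula.

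The main obstacle, and the step requiring genuine care, is the symmetrization that produces the $k!$ and upgrades one Vandermonde factor to its square. Expanding the numerator determinant gives a single (antisymmetric) Vandermonde $\prod_{j<n}(x_n-x_j)$, not its square; the second factor appears only after symmetrizing the integrand over the $k!$ permutations of $x_1,\dots,x_k$. Concretely, I would apply the Andreief (Heine) identity, which states that integrating a product of two $k\times k$ determinants against a product measure yields $k!$ times the determinant of the matrix of pairwise integrals. Applying this simultaneously to the numerator (one Vandermonde times the monic-factor row expansion) and to $\det G^{[k]}$ (two Vandermondes) reconciles the combinatorial factor $1/k!$ and the squared product $\prod_{1\le j<n\le k}(x_n-x_j)^2$, while the linear factors $\prod_{j=1}^k(x-x_j)$ survive from the bordered column. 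Collecting the numerator and denominator then yields precisely the claimed expression, completing the proof.
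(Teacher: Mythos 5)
Your plan is correct and follows the same route as the paper, which simply converts the quasi-determinant to the ratio of determinants $P_k(x)=\det[\cdots]/\det G^{[k]}$ and invokes the Vandermonde formula; your version supplies the details the paper omits (introducing one integration variable per column of $G^{[k]}$, extracting the $(k+1)$-point Vandermonde $\prod_{j<n}(x_n-x_j)\prod_j(x-x_j)$, and symmetrizing over the $k!$ permutations to upgrade the single Vandermonde to its square). One small correction: the Andreief identity for the denominator is not needed for the formula as stated, since $\det G^{[k]}$ survives unchanged in the final expression --- both the factor $1/k!$ and the second Vandermonde factor come entirely from symmetrizing the numerator, so your attribution of these to the denominator identity is misplaced (using it would instead yield the equivalent variant with $\int\prod_{j<n}(x_n-x_j)^2\,\d\mu^{\otimes k}$ in the denominator).
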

\begin{proof}
	 From the quasi-determinantal expression we get 
\begin{align*}
P_{k}(x)&=\frac{1}{\det\left[G^{[k]}\right]} \det \begin{bmatrix}[c c c c c|c]
&         & G^{[k]} &  &           & \begin{matrix}  1 \\ x \\ \vdots\\ x^{k-1} \end{matrix}\\
\hline
G_{k,0} & G_{k,1} & \dots   &  & G_{k,k-1} & x^k \end{bmatrix},
\end{align*}
and using the  Vandermonde formula we get the result.
\end{proof}

\subsection{Gauss quadrature formula}

\begin{pro}[Eigenvalues of the truncation of the spectral matrix]
	Assume that  the measure  $\mu$ does not change sign in its support $(a,b)$. Then, all eigenvalues of the truncations of the spectral matrices $J_1$ and $J_2$  belong to  $(a,b)$ and are simple.
\end{pro}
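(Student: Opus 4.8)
The plan is to reduce the statement to the classical fact that the zeros of orthogonal polynomials are real, simple, and confined to the support, and then to establish that fact by the standard sign-change argument.

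First I would invoke the earlier Proposition identifying the eigenvalues of the truncation $J_i^{[k]}$ with the roots of $P_{i,k}(z)$. This reduces the claim to showing that, for every $k$, the polynomial $P_{i,k}$ has exactly $k$ distinct real zeros, all lying in $(a,b)$; a degree-$k$ polynomial with $k$ distinct zeros automatically has only simple zeros, so both assertions (location and simplicity) follow at once. In the Hankel reduction the factorization is of Cholesky type with $S_1=S_2=S$, hence $P_{1,k}=P_{2,k}=:P_k$ and $J_1=J_2$, so it suffices to argue for the single family $P_k$. I may also assume $\d\mu\geq 0$ on $(a,b)$, since replacing $\mu$ by $-\mu$ (allowed because $\mu$ does not change sign) leaves all zeros unchanged.

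The heart of the argument is the sign-change count. Recall that in the moment-matrix case orthogonality reads $\prodint{P_k(x),x^l}_\mu=\int_\R P_k(x)\,x^l\,\d\mu(x)=0$ for $0\leq l<k$, so by linearity $\int_\R P_k(x)Q(x)\,\d\mu(x)=0$ for every polynomial $Q$ of degree $<k$. Let $x_1,\dots,x_m$ be the points of $(a,b)$ where $P_k$ changes sign, i.e. its real zeros of odd multiplicity inside the support. Taking $l=0$ gives $\int_\R P_k\,\d\mu=0$, so $P_k$ cannot keep a constant sign against the nonvanishing sign-definite $\mu$, whence $m\geq 1$. I would then set $Q(x):=\prod_{j=1}^m(x-x_j)$, of degree $m$, so that $P_kQ$ exhibits no sign change on $(a,b)$. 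If $m<k$, orthogonality forces $\int_\R P_kQ\,\d\mu=0$, yet $P_kQ$ is a sign-definite, not identically zero polynomial integrated against a sign-definite measure with infinite support, so the integral is strictly nonzero — a contradiction. Hence $m=k$: the polynomial $P_k$ has $k$ distinct sign changes, i.e. $k$ simple real zeros, all inside $(a,b)$.

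The main obstacle is precisely the nonvanishing of $\int_\R P_kQ\,\d\mu$. This is where the hypothesis that $\mu$ does not change sign is indispensable: the product $P_kQ$ is a nonzero polynomial that keeps a fixed sign on $(a,b)$, and because quasi-definiteness holds at every order the support of $\mu$ is infinite, so the integrand is nonzero on a set of positive measure and integrates to a strictly sign-definite value. Granting this, the $k$ distinct zeros of $P_k$ are, by the eigenvalue-root Proposition, eigenvalues of the $k\times k$ matrix $J_i^{[k]}$; being $k$ in number they exhaust its spectrum, so every eigenvalue of $J_1^{[k]}$ and $J_2^{[k]}$ is simple and lies in $(a,b)$, as claimed.
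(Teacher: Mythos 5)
Your proposal is correct and follows essentially the same route as the paper: reduce to the location and simplicity of the zeros of $P_k$ via the eigenvalue--root identification, then run the classical sign-change argument with the auxiliary polynomial $(x-x_1)\cdots(x-x_m)$ and orthogonality against polynomials of degree $<k$. You merely spell out two points the paper leaves implicit, namely the appeal to the earlier truncation proposition and the reason the sign-definite integral cannot vanish.
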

\begin{proof}
	Let   $\{a_i\}_{i=1}^m\subset(a,b)$ be the points  where the polynomial $P_n$ changes sign in $(a,b)$. Then, as $P_n(z)$ has  $n$ roots, $m\leq n$. Therefore, $(x-a_1)\dots(x-a_m)P_n(x)$ does not change sign in $(a,b)$, but from the orthogonality relations we know that $P_n$ is orthogonal to any polynomial of degree less than $n$ and, consequently,
	\begin{align*}
	\int_{a}^b(x-a_1)\dots(x-a_m)P_n(x)\d\mu(x)=0,
	\end{align*}
	for $m<n$, which is, as $\mu$ does not change sign in its support,  contradictory. Hence, the only possibility is to have $m=n$.
\end{proof}

\begin{pro}[Powers of truncations of the spectral  matrices]
	\begin{align*}
	\left(J^{[k]}\right)^{j}&=\mathbb{P}\begin{pmatrix}
	\alpha_{k,1}^j         &                     &          &             \\
	& \alpha_{k,2}^j        &          &             \\
	&                     & \ddots   &             \\
	&                     &          & \alpha_{k,k}^j
	\end{pmatrix} \mathbb{P}^{-1}, &
	\mathbb{P}&:=\begin{pmatrix}
	1                   & 1                   & \dots & 1                   \\
	P_{1}(\alpha_{k,1}) & P_{1}(\alpha_{k,2}) & \dots & P_{1}(\alpha_{k,k})  \\
	\vdots &&&\vdots\\
	P_{k-1}(\alpha_{k,1}) & P_{k-1}(\alpha_{k,2}) & \dots & P_{k-1}(\alpha_{k,k}) 
	\end{pmatrix}.
	\end{align*}
\end{pro}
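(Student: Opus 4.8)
The plan is to read off the diagonalization of $J^{[k]}$ directly from the two preceding propositions and then simply raise it to the $j$-th power. First I would recall that, by the Proposition relating the eigenvalues of the truncation to the roots of the polynomials, each root $\alpha_{k,i}$ of $P_k$ yields an eigenvector
\begin{align*}
\boldsymbol{p}_i:=\begin{pmatrix} P_0(\alpha_{k,i}) \\ P_1(\alpha_{k,i}) \\ \vdots \\ P_{k-1}(\alpha_{k,i}) \end{pmatrix}=\begin{pmatrix} 1 \\ P_1(\alpha_{k,i}) \\ \vdots \\ P_{k-1}(\alpha_{k,i}) \end{pmatrix}
\end{align*}
of $J^{[k]}$ with eigenvalue $\alpha_{k,i}$, where I have used that $P_0\equiv 1$. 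These vectors are exactly the columns of $\mathbb{P}$, so the individual eigenvalue relations assemble into the single matrix identity $J^{[k]}\mathbb{P}=\mathbb{P}\,\diag(\alpha_{k,1},\dots,\alpha_{k,k})$.

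Next I would establish that $\mathbb{P}$ is invertible, which is the real content of the argument. By the preceding Proposition on the eigenvalues of the truncation under a sign-definite measure, the $\alpha_{k,i}$ are simple, hence pairwise distinct. Writing $P_r(x)=\sum_{s} S_{r,s}x^s$ from $P=S\chi$, the matrix $\mathbb{P}$ factorizes as $\mathbb{P}=S^{[k]}V$, where $V$ is the Vandermonde matrix with entries $V_{s,i}=\alpha_{k,i}^{s}$ and $S^{[k]}$ is the lower unitriangular truncation of $S$. Since $\det S^{[k]}=1$ and $\det V=\prod_{1\leq i<j\leq k}(\alpha_{k,j}-\alpha_{k,i})\neq 0$ by the distinctness of the roots, the matrix $\mathbb{P}$ is nonsingular.

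Then I would conclude by inversion: from $J^{[k]}\mathbb{P}=\mathbb{P}\,\diag(\alpha_{k,1},\dots,\alpha_{k,k})$ together with the invertibility of $\mathbb{P}$ one obtains $J^{[k]}=\mathbb{P}\,\diag(\alpha_{k,1},\dots,\alpha_{k,k})\,\mathbb{P}^{-1}$, and raising to the $j$-th power telescopes the conjugating factors, so that
\begin{align*}
\left(J^{[k]}\right)^{j}=\mathbb{P}\,\diag\!\left(\alpha_{k,1}^{j},\dots,\alpha_{k,k}^{j}\right)\mathbb{P}^{-1},
\end{align*}
which is the claimed formula. The main obstacle is precisely the invertibility of $\mathbb{P}$: everything else is immediate from the two prior propositions, so the argument hinges on converting the simplicity of the roots (guaranteed by the sign-definiteness hypothesis) into the nonvanishing of the Vandermonde determinant through the triangular factorization $\mathbb{P}=S^{[k]}V$.
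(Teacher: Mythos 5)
Your proof is correct and follows exactly the route the paper leaves implicit: the proposition is stated without proof, immediately after the propositions establishing that each root $\alpha_{k,i}$ of $P_k$ gives an eigenvector $\left(P_0(\alpha_{k,i}),\dots,P_{k-1}(\alpha_{k,i})\right)^\top$ of $J^{[k]}$ and that these roots are simple, and your argument is the natural assembly of those two facts. Your explicit justification of the invertibility of $\mathbb{P}$ via the factorization $\mathbb{P}=S^{[k]}V$ into a unitriangular factor times a nonsingular Vandermonde matrix is the one step the paper does not even hint at, and it is handled correctly.
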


\begin{pro}The following identity holds
\begin{align*}
\int_{\Omega} x^{j} \d \mu(x)=(J^j)_{0,0} H_0.
\end{align*}
\end{pro}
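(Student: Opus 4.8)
The plan is to peel the outer unitriangular factors off $J^{j}$ using the Cholesky factorization $G=S^{-1}HS^{-\top}$ together with the definition $J=S\Lambda S^{-1}$, thereby reducing the scalar $(J^{j})_{0,0}H_{0}$ to a single entry of the matrix $\Lambda^{j}G$, which is then evaluated through the moment representation $G=\int\chi(x)\big(\chi(x)\big)^{\top}\d\mu(x)$ and the spectral property of the shift.

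First I would raise $J=S\Lambda S^{-1}$ to the $j$-th power, obtaining $J^{j}=S\Lambda^{j}S^{-1}$, so that $(J^{j})_{0,0}=\boldsymbol{e}_0^{\top}S\Lambda^{j}S^{-1}\boldsymbol{e}_0$. Because $S$ is lower unitriangular its zeroth row is $\boldsymbol{e}_0^{\top}$, whence $\boldsymbol{e}_0^{\top}S=\boldsymbol{e}_0^{\top}$; likewise $S^{-\top}$ is lower unitriangular with zeroth column $\boldsymbol{e}_0$, so $S^{-\top}\boldsymbol{e}_0=\boldsymbol{e}_0$ and hence $HS^{-\top}\boldsymbol{e}_0=H_0\boldsymbol{e}_0$. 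Multiplying by $H_0$, using $\boldsymbol{e}_0^{\top}S=\boldsymbol{e}_0^{\top}$ to absorb the left factor, moving the scalar $H_0$ inside, and then reinserting the factorization, I would write
\begin{align*}
(J^{j})_{0,0}H_{0}=\boldsymbol{e}_0^{\top}\Lambda^{j}S^{-1}\big(H_0\boldsymbol{e}_0\big)=\boldsymbol{e}_0^{\top}\Lambda^{j}S^{-1}HS^{-\top}\boldsymbol{e}_0=\boldsymbol{e}_0^{\top}\Lambda^{j}G\,\boldsymbol{e}_0.
\end{align*}

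To close the argument I would use the moment representation of the Gram matrix together with the spectral property $\Lambda\chi(x)=x\chi(x)$, which iterates to $\Lambda^{j}\chi(x)=x^{j}\chi(x)$. Since the zeroth component of $\chi$ is $1$, i.e. $\boldsymbol{e}_0^{\top}\chi(x)=1=\big(\chi(x)\big)^{\top}\boldsymbol{e}_0$, the previous display becomes
\begin{align*}
(J^{j})_{0,0}H_{0}=\int \big(\boldsymbol{e}_0^{\top}\Lambda^{j}\chi(x)\big)\big(\big(\chi(x)\big)^{\top}\boldsymbol{e}_0\big)\d\mu(x)=\int x^{j}\d\mu(x),
\end{align*}
which is the desired identity.

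The only point requiring care is the well-definedness of the semi-infinite product $S\Lambda^{j}S^{-1}$ at the $(0,0)$ slot; this is harmless because the lower uni-Hessenberg band structure of $J$ makes $(J^{j})_{0,0}$ a genuinely finite sum, and the manipulations above in fact extract the single entry $\boldsymbol{e}_j^{\top}G\,\boldsymbol{e}_0=G_{j,0}=m_{j}$ without ever summing an infinite series. Thus the main obstacle is merely the bookkeeping of the triangular factors and the correct identification of the zeroth row and column of $S$ and $S^{-\top}$, rather than any analytic subtlety.
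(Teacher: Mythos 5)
Your proof is correct, but it takes a genuinely different route from the paper's. The paper argues at the level of the polynomials: from the spectral property $J^jP(x)=x^jP(x)$ and $P_0=1$ it reads off the expansion $x^j=(J^j)_{0,0}+(J^j)_{0,1}P_1(x)+\dots+(J^j)_{0,j}P_j(x)$, integrates, and kills every term except the first via the orthogonality relations $\langle P_k,1\rangle=\delta_{k,0}H_0$. You instead stay entirely at the matrix level: conjugating $J^j=S\Lambda^jS^{-1}$ and reinserting the Cholesky factorization $G=S^{-1}HS^{-\top}$ collapses $(J^j)_{0,0}H_0$ to $\boldsymbol{e}_0^\top\Lambda^jG\,\boldsymbol{e}_0=G_{j,0}=m_j$, with no explicit mention of the polynomials or of orthogonality --- both are of course encoded in the triangularity of the factors you exploit. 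Your version is arguably more economical and makes the finiteness of every sum transparent; the paper's version makes visible \emph{why} it is precisely the coefficient $(J^j)_{0,0}$, weighted by $H_0=\langle 1,1\rangle$, that survives the integration. One small slip in your write-up: $S^{-\top}$ is \emph{upper}, not lower, unitriangular; the fact you actually use --- that its zeroth column is $\boldsymbol{e}_0$ --- is nonetheless correct, being the transpose of the statement that the zeroth row of the lower unitriangular $S^{-1}$ is $\boldsymbol{e}_0^\top$.
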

\begin{proof}It follows from 3-term relation
\begin{align*}
\begin{aligned}
x^jP(x)&=J^jP(x)    &    &\Longrightarrow  &   x^j&=(J^j)_{0,0}+(J^j)_{0,1}P_1(x)+\dots+(J^j)_{0,j}P_j(x),
\end{aligned}
\end{align*}
and, consequently,
\begin{align*}
\int_{\Omega} x^{j} \d \mu(x)= (J^j)_{0,0}\langle 1,1 \rangle+
(J^j)_{0,1}\langle P_1,1\rangle+\dots+(J^j)_{0,j}\langle P_j,1\rangle= (J^j)_{0,0} H_0.
\end{align*}
\end{proof}

\begin{pro}[Gaussian cuadrature]
The roots $\{\alpha_{k,l}\}_{l=1}^k$ of the orthogonal polynomials  $P_k(x)$ are the  $k$ points for the quadrature  of  $\mu$ with
precision  $ 2k-1$. Namely,
\begin{align*}
\int_{\Omega} x^{j} \d \mu(x)&= \sum_{l=1}^{k} \lambda_{j,l} \alpha_{k,l}^{j},& 0&\leq j \leq 2k-1,
\end{align*}
for some coefficients $\lambda_{j,l}$.
\end{pro}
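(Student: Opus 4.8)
The plan is to derive the quadrature directly from the three preceding propositions of this Hankel section: the moment formula $\int_\Omega x^j \d\mu = (J^j)_{0,0} H_0$, the spectral decomposition of truncated powers $(J^{[k]})^j = \mathbb{P}\,\diag(\alpha_{k,l}^j)\,\mathbb{P}^{-1}$, and the tridiagonal (Jacobi) shape of $J=J_1$. First I would observe that by the moment proposition the entire statement reduces to a single scalar identity: it suffices to control the $(0,0)$ entry of $J^j$ for $0 \le j \le 2k-1$.

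The crux, and the only place where the precision bound $2k-1$ is used, is the claim that the truncation does not alter this entry, namely $(J^j)_{0,0} = ((J^{[k]})^j)_{0,0}$ for $0 \le j \le 2k-1$. I would prove this by a walk-localization argument exploiting that $J$ is tridiagonal. The entry $(J^j)_{0,0}$ equals a weighted sum over length-$j$ walks on $\{0,1,2,\dots\}$ with steps in $\{-1,0,+1\}$ that start and end at $0$; any such walk reaches index at most $\lfloor j/2 \rfloor$, since returning to $0$ forces equally many up- and down-steps. For $j \le 2k-1$ this gives $\lfloor j/2 \rfloor \le k-1$, so every contributing walk stays inside the indices $\{0,\dots,k-1\}$ and uses only matrix entries already present in the block $J^{[k]}$; the entries connecting index $k-1$ to index $k$ are never visited. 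Hence the two $(0,0)$ entries coincide. This structural localization is the main obstacle of the proof.

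Finally I would diagonalize using the proposition on powers of truncations. Writing out the $(0,0)$ entry of $\mathbb{P}\,\diag(\alpha_{k,l}^j)\,\mathbb{P}^{-1}$ gives $((J^{[k]})^j)_{0,0} = \sum_{l=1}^k \mathbb{P}_{0,l}\,\alpha_{k,l}^j\,(\mathbb{P}^{-1})_{l,0}$, and since the top row of $\mathbb{P}$ is identically $1$ (because $P_0 \equiv 1$) this collapses to $\sum_{l=1}^k \alpha_{k,l}^j\,(\mathbb{P}^{-1})_{l,0}$. Setting $\lambda_l := (\mathbb{P}^{-1})_{l,0} H_0$ and combining with the moment formula and the localization step, I obtain $\int_\Omega x^j \d\mu = \sum_{l=1}^k \lambda_l\,\alpha_{k,l}^j$ for $0 \le j \le 2k-1$, which is the asserted quadrature. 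I would emphasize that the weights $\lambda_l$ are \emph{independent of $j$} — so the $\lambda_{j,l}$ of the statement is really $\lambda_l$ — and that $\mathbb{P}$ is invertible precisely because the earlier proposition guarantees the roots $\alpha_{k,l}$ are simple, which makes $\mathbb{P}$ a nonsingular generalized Vandermonde matrix in the orthogonal-polynomial basis.
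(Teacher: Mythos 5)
Your proposal is correct and follows essentially the same route as the paper: reduce to the $(0,0)$ entry via the moment identity, use the tridiagonal structure to replace $J^j$ by $\bigl(J^{[k]}\bigr)^j$ in that entry for $0\leq j\leq 2k-1$, and then diagonalize with $\mathbb{P}$. The only additions are welcome refinements the paper leaves implicit — the explicit walk-counting justification of $\bigl(J^j\bigr)_{0,0}=\bigl(\bigl[J^{[k]}\bigr]^j\bigr)_{0,0}$ and the observation that the weights $\lambda_l=(\mathbb{P}^{-1})_{l,0}H_0$ do not depend on $j$.
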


\begin{proof}
	From the  $(2+1)$-diagonal structure of the Jacobi matrix $J$ it follows that
$\left(J^j\right)_{0,0}=\left(\left[J^{[k]}\right]^j \right)_{0,0}$ whenever $0\leq j \leq 2k-1$.

Now, using the diagonal form of the powers of $J$ and denoting  by
$\mathbb{P}^{-1}\rfloor_{1}$ the first column of  $\mathbb{P}^{-1}$
\begin{align*}
\int_{\Omega} x^{j} \d \mu(x)= \left(\left[J^{[k]}\right]^j \right)_{0,0}H_0=
H_0 \begin{pmatrix} \alpha_{k,1}^j & \alpha_{k,2}^j & \dots & \alpha_{k,k}^j \end{pmatrix} \mathbb{P}^{-1}\rfloor_1=
\sum_{l=1}^{k} \lambda_{j,l} \alpha_{k,l}^{j},  
\end{align*}
for $0\leq j \leq 2k-1$.

\end{proof}

\subsection{Christoffel--Darboux formula}
\begin{pro}[Christoffel--Darboux formula]
	The Christoffel--Darboux kernel satisfies
	\begin{align*}
	(x- y)K_n(x,y)&=P_n(y) (H_n)^{-1}P_{n+1}(x)-P_{n+1}(y)(H_{n})^{-1}P_{n}(x),
	\end{align*}
	with confluent version given by
	\begin{align*}
	\sum_{k=0}^{l-1}\frac{(P_k(x))^2}{H_k}=\frac{1}{H_{l-1}}\Big(P'_{l}(x)P_{l-1}(x)-P'_{l-1}(x)P_{l}(x)\Big)
	\end{align*}
\end{pro}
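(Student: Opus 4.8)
The plan is to exploit the Hankel reduction of this section, under which the two families collapse to a single one, $P_1=P_2=:P$ with a common $H$, so that the Christoffel--Darboux kernel is the scalar expression $K_n(x,y)=\sum_{k=0}^n P_k(y)H_k^{-1}P_k(x)$. The only structural input will be the three-term relation established above, which I rewrite as
\begin{align*}
xP_k(x)=P_{k+1}(x)+b_kP_k(x)+\frac{H_k}{H_{k-1}}P_{k-1}(x),\qquad b_k:=S_{k,k-1}-S_{k+1,k},
\end{align*}
together with the usual convention $P_{-1}:=0$.

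First I would form $(x-y)K_n(x,y)=\sum_{k=0}^n H_k^{-1}\big(xP_k(x)P_k(y)-yP_k(y)P_k(x)\big)$ and substitute the recursion for $xP_k(x)$ and the same recursion (now in the variable $y$) for $yP_k(y)$. The crucial simplification is that the diagonal contribution $b_kP_k(x)P_k(y)$ is symmetric in $x$ and $y$ and therefore cancels in the difference, leaving only the off-diagonal pieces
\begin{align*}
xP_k(x)P_k(y)-yP_k(y)P_k(x)=\big(P_{k+1}(x)P_k(y)-P_{k+1}(y)P_k(x)\big)+\frac{H_k}{H_{k-1}}\big(P_{k-1}(x)P_k(y)-P_{k-1}(y)P_k(x)\big).
\end{align*}

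The key step is to recognize this as a telescoping difference. Setting $U_k:=H_k^{-1}\big(P_{k+1}(x)P_k(y)-P_{k+1}(y)P_k(x)\big)$ and using the identity $\frac{H_k}{H_{k-1}}\cdot\frac{1}{H_k}=\frac{1}{H_{k-1}}$, the second bracket contributes exactly $-U_{k-1}$, so that the $k$-th summand equals $U_k-U_{k-1}$. Summing from $k=0$ to $n$ telescopes to $U_n-U_{-1}$, and the convention $P_{-1}=0$ annihilates $U_{-1}$; this yields the stated formula $(x-y)K_n(x,y)=H_n^{-1}\big(P_{n+1}(x)P_n(y)-P_{n+1}(y)P_n(x)\big)$. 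I expect the bookkeeping of the $H_k/H_{k-1}$ factor — the observation that makes the sum collapse rather than merely simplify — to be the only place that demands care.

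Finally, the confluent version follows by letting $y\to x$. Writing $f(y):=P_{n+1}(x)P_n(y)-P_{n+1}(y)P_n(x)$ one has $f(x)=0$, so the Taylor expansion $f(y)=f'(x)(y-x)+O\big((y-x)^2\big)$ gives
\begin{align*}
\lim_{y\to x}\frac{f(y)}{x-y}=-f'(x)=P'_{n+1}(x)P_n(x)-P'_n(x)P_{n+1}(x).
\end{align*}
Since the left-hand side tends to $K_n(x,x)=\sum_{k=0}^n H_k^{-1}\big(P_k(x)\big)^2$, relabelling $l=n+1$ (so $H_{l-1}=H_n$, $P_l=P_{n+1}$, $P_{l-1}=P_n$) reproduces the confluent identity exactly.
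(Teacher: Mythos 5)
Your argument is correct, and it reaches the result by a genuinely different route from the one in the paper. You run the classical telescoping proof: substitute the scalar three-term recursion into $(x-y)K_n(x,y)=\sum_{k=0}^n H_k^{-1}\bigl(xP_k(x)P_k(y)-yP_k(x)P_k(y)\bigr)$, observe that the diagonal coefficient $b_k$ drops out by symmetry, and recognize that the normalization $H_k/H_{k-1}$ of the sub-diagonal coefficient turns the $k$-th summand into the exact difference $U_k-U_{k-1}$, so the sum collapses to the boundary term $U_n$. The paper instead works at the level of the truncated matrices: it writes the spectral relation $JP=xP$ in block form for the truncation of size $l$, left- and right-multiplies by $[P(x)^\top]^{[l]}$ and $P(y)^{[l]}$, and subtracts; the symmetry of $H^{-1}J$ (i.e.\ $JH=HJ^\top$) kills the bulk, and the tridiagonality of $J$ means the off-diagonal blocks $(H^{-1}J)^{[l,\geq l]}$ and $(H^{-1}J)^{[\geq l,l]}$ each contain a single nonzero entry, which produces the two boundary terms in one stroke. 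The two proofs use the same structural input --- tridiagonality plus the fact that the sub-diagonal entries are $H_k H_{k-1}^{-1}$ --- but yours is the elementary term-by-term version while the paper's is the compact matrix version, which generalizes more readily to the biorthogonal and mixed-kernel settings treated elsewhere in the text (indeed the mixed Christoffel--Darboux formula is proved there by the same truncation device, where a naive telescoping would have to track the extra inhomogeneous term $-H_0\boldsymbol e_0$ in the recursion for the second kind functions). Your treatment of the confluent limit via the first-order Taylor expansion of $f(y)=P_{n+1}(x)P_n(y)-P_{n+1}(y)P_n(x)$ at $y=x$ matches the paper's ``take the limit $y\to x$'' and is carried out in more detail. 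The only cosmetic caveat is that your $U_{-1}$ formally contains the undefined factor $H_{-1}^{-1}$; it is cleaner to say that the $k=0$ summand simply has no $P_{-1}$ contribution, so the telescoping starts at $U_0$ minus nothing.
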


\begin{proof}
	From the eigenvalue property we get 
\begin{align*}
\left( H^{-1}J\right)^{[l]} P(y)^{[l]}+\left( H^{-1}J\right)^{[l,\geq l]} P(y)^{[\geq l]}&=y \left( H^{-1}\right)^{[l]} P(y)^{[l]}, \\
[P(x)^{\top}]^{[l]} \left( H^{-1}J\right)^{[l]}+[P(x)^{\top}]^{[\geq l]} \left( H^{-1}J\right)^{[\geq l,l]}&=x[P(x)^{\top}]^{[l]}\left( H^{-1}\right)^{[l]}.
\end{align*}
Left multiply the first eq. by  $[P(x)^{\top}]^{[l]}$ and right multiply the second by $P(y)^{[l]}$,  and subtract both results to get
\begin{align*}
[P(x)^{\top}]^{[l]}\left( H^{-1}J\right)^{[l,\geq l]}P(y)^{[\geq l]}-
[P(x)^{\top}]^{[\geq l]} \left( H^{-1}J\right)^{[\geq l,l]}P(y)^{[l]}=
(y-x)[P(x)^{\top}]^{[l]}\cdot \left( H^{-1}\right)^{[l]} P(y)^{[l]}
\end{align*}
and the result follows. To prove the confluent case just take the limit  $y\rightarrow x$  in the previous case. 
\end{proof}

\begin{pro}[Christoffel--Darboux formula for mixed kernels]
The mixed Christoffel-Darboux kernel fulfills
\begin{align*}
(x-{y})	K^{\operatorname{(mix})}_{n}(x,y)
=P_{n}(y)(H_n)^{-1}C_{n+1}(x)-P_{n+1}(y)(H_n)^{-1}C_{n}(x)+1
\end{align*}
\end{pro}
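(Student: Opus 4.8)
The plan is to mirror the proof of the ordinary Christoffel--Darboux formula, replacing the polynomial vector in the $x$-variable by the vector of second kind functions and carefully tracking the single inhomogeneous term that distinguishes their recursions. Throughout I set $l=n+1$. From the $3$-term relations I use $JP(y)=yP(y)$ together with $JC(x)=xC(x)-H_0\boldsymbol{e}_0$, and from the proposition $J_1H=H(J_2)^\top$ (here $J=J_1=J_2$) I note that $M:=H^{-1}J$ is symmetric. Left-multiplying by $H^{-1}$ and using $H^{-1}\boldsymbol{e}_0=H_0^{-1}\boldsymbol{e}_0$ recasts the two eigenrelations as $MP(y)=yH^{-1}P(y)$ and $MC(x)=xH^{-1}C(x)-\boldsymbol{e}_0$; transposing the latter and invoking $M^\top=M$, $(H^{-1})^\top=H^{-1}$ gives $[C(x)]^\top M=x[C(x)]^\top H^{-1}-\boldsymbol{e}_0^\top$.

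Next I truncate both identities to the first $l$ rows (resp. columns) exactly as in the ordinary Christoffel--Darboux proof, so that each expression splits into an $[l]$ block plus a term coupling to the $[\geq l]$ tail. Because $M$ is tridiagonal, these coupling pieces $M^{[l,\geq l]}$ and $M^{[\geq l,l]}$ each reduce to a single entry, namely $M_{n,n+1}=H_n^{-1}$ and $M_{n+1,n}=H_{n+1}^{-1}J_{n+1,n}=H_n^{-1}$. I then left-multiply the truncated $P(y)$-relation by $[C(x)^\top]^{[l]}$, right-multiply the truncated $C(x)$-relation by $P(y)^{[l]}$, and subtract. The diagonal contributions $[C(x)^\top]^{[l]}M^{[l]}P(y)^{[l]}$ cancel, and the two surviving boundary terms are $C_n(x)H_n^{-1}P_{n+1}(y)$ and $C_{n+1}(x)H_n^{-1}P_n(y)$.

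On the right-hand side of the subtraction the two $H^{-1}$ terms combine into $(y-x)[C(x)^\top]^{[l]}(H^{-1})^{[l]}P(y)^{[l]}=(y-x)K^{\operatorname{mix}}_n(x,y)$, while the inhomogeneous term yields $(\boldsymbol{e}_0^\top)^{[l]}P(y)^{[l]}=P_0(y)=1$. Equating and rearranging, and using that all entries are scalars so that $C_k(x)H_k^{-1}P_j(y)=P_j(y)(H_k)^{-1}C_k(x)$, produces $(x-y)K^{\operatorname{mix}}_n(x,y)=P_n(y)(H_n)^{-1}C_{n+1}(x)-P_{n+1}(y)(H_n)^{-1}C_n(x)+1$, as claimed. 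The only delicate point, and the one thing that must be handled with care, is the bookkeeping of the extra term $-\boldsymbol{e}_0$: it is precisely this inhomogeneity in the second kind recursion, surviving the subtraction as $(\boldsymbol{e}_0^\top)^{[l]}P(y)^{[l]}$, that generates the additive $+1$ absent from the purely polynomial Christoffel--Darboux formula; everything else is the identical band-matrix manipulation used there.
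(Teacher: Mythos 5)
Your proof is correct and follows essentially the same route the paper sketches: it evaluates the bilinear expression built from $H^{-1}J$ in two ways (acting on $P$ versus acting on $C$), exactly as in the unmixed Christoffel--Darboux argument, with the inhomogeneous term $-H_0\boldsymbol{e}_0$ in $JC(x)=xC(x)-H_0\boldsymbol{e}_0$ correctly traced to the additive $+1$. The index bookkeeping ($M_{n,n+1}=M_{n+1,n}=H_n^{-1}$, $(\boldsymbol{e}_0^\top)^{[l]}P(y)^{[l]}=1$) checks out, so your write-up is simply a fully detailed version of the paper's one-line proof.
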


\begin{proof}
	As in the no mixed case consider the expression $P(x)^{\top}H^{-1}J C(y)$ and the two possible ways of computing them 
either as 
$P(x)^{\top}\left[H^{-1}J C(y)\right]$ or as $\left[P(x)^{\top}H^{-1}J\right] C(y)$.

\end{proof}

\section{Very classical orthogonal polynomials: Hermite, Laguerre and Jacobi polynomials}

Here we study the definite positive case. Neither Bessel polynomials nor Laguerre and Jacobi polynomials with right parameters leading to quasi definite linear functionals are considered.
The very classical orthogonal polynomials, Hermite, Laguerre and Jacobi can be understood \emph{à la Bochner } as the eigenfunctions of second order differential operators, associated with corresponding definite positive Borel measures. They also can be characterized by a Rodrigues formula or if you want as those that when derivated preserve the orthogonal character.

\begin{defi}[Pearson equation]
	The weight  $u_{\gamma}$ is said very classical whenever we have polynomials
	$p_2(x)=a x^2+bx+c$ and $p_{1,\gamma}(x)=(A_{\gamma}-2a)x+(B_{\gamma}-b)$ with $A_{\gamma}\neq 0$ such that $u_{\gamma}$ 
	satisfies
	\begin{align*}
	p_2(x) \frac{\d}{\d x} u_{\gamma} =p_{1,\gamma}(x) u_{\gamma}
	\end{align*}
\end{defi}

The well known very classical weights are:
\begin{itemize}
	\item Hermite $u(x)=\operatorname{e}^{-x^2}$, $x\in \mathbb{R}$ with $p_{1}=-2x$, $p_2=1$
	\item Laguerre $u_{\alpha}(x)=x^{\alpha} \operatorname{e}^{-x}$, $\alpha >-1$, $x\in \mathbb{R}_+$, with  $p_{1,\alpha}=(\alpha-x)$, $p_2=x$
	\item Jacobi $u_{\alpha,\beta}(x)=(1-x)^{\alpha}(1+x)^{\beta}$, $\alpha,\beta > -1$, $x\in(-1,1)$, with $p_{1,\alpha,\beta}=-[(\alpha-\beta)+(\alpha+\beta)x]$, $p_2=1-x^{2}$
\end{itemize}
They depend upon: zero ($\gamma=\{\varnothing\}$), one ($\gamma=\{\alpha\}$) and  two parameters ($\gamma=\{\alpha, \beta\}$) respectively. 

\begin{itemize}
\item We denote $u_{\gamma+1}(x)$ the action of increasing by one all the parameters in a classical weight $u_\gamma(x)$. For example,  Hermite do not change (no parameter present), in Laguerre we have $\alpha\mapsto\alpha+1$ while in Jacobi the shift  is  $(\alpha, \beta)\mapsto(\alpha+1, \beta+1)$
\item	$p_2(x) u_{\gamma}(x)= u_{\gamma+1}(x)$
\end{itemize}

\begin{pro}[Properties]
	For the very classical weights we have
 \begin{align*}
p_2 u_{\gamma}\big{\lvert}_{\partial \Omega }&=u_{\gamma+1}\big{\lvert}_{\partial \Omega }=0, &
\langle p_2 f',h \rangle_{u_\gamma}&= -\langle f,(p_2'+p_1)h\rangle_{u_\gamma}-  \langle f,p_2 h' \rangle_{u_\gamma}.
\end{align*}
\end{pro}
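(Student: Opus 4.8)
The plan is to read both assertions off the measure representation $\langle f,h\rangle_{u_\gamma}=\int_\Omega f(x)\,u_\gamma(x)\,h(x)\,\d x$ of the bilinear form, combined with the Pearson equation $p_2u_\gamma'=p_1u_\gamma$ (with $p_1=p_{1,\gamma}$) and the already noted identity $p_2u_\gamma=u_{\gamma+1}$.

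First I would settle the boundary statement. The equality $p_2u_\gamma=u_{\gamma+1}$ holds identically, so in particular on $\partial\Omega$, and only the vanishing remains to be verified; I would check it family by family. For Hermite, $\Omega=\R$ and $p_2u=\Exp{-x^2}\to 0$ as $x\to\pm\infty$. For Laguerre, $\Omega=(0,\infty)$ and $p_2u_\alpha=x^{\alpha+1}\Exp{-x}$ vanishes at $0$ because $\alpha+1>0$ and at $+\infty$ by exponential decay. For Jacobi, $\Omega=(-1,1)$ and $p_2u_{\alpha,\beta}=(1-x)^{\alpha+1}(1+x)^{\beta+1}$ vanishes at both endpoints since $\alpha+1,\beta+1>0$. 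The unifying observation worth stressing is that raising the parameters by one preserves their positivity, which is exactly what forces the shifted weight to die on the boundary.

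For the integration-by-parts identity I would write the left-hand side as $\langle p_2f',h\rangle_{u_\gamma}=\int_\Omega f'(x)\,(p_2u_\gamma h)(x)\,\d x$ and integrate by parts, obtaining $\big[f\,p_2u_\gamma h\big]_{\partial\Omega}-\int_\Omega f\,(p_2u_\gamma h)'\,\d x$. The boundary term contains the factor $p_2u_\gamma=u_{\gamma+1}$, which vanishes on $\partial\Omega$ by the first part, so it drops out. Expanding $(p_2u_\gamma h)'=p_2'u_\gamma h+p_2u_\gamma'h+p_2u_\gamma h'$ and substituting $p_2u_\gamma'=p_1u_\gamma$ from Pearson rewrites the integrand as $f\,\big[(p_2'+p_1)h+p_2h'\big]\,u_\gamma$; recognizing the two pieces as $\langle f,(p_2'+p_1)h\rangle_{u_\gamma}$ and $\langle f,p_2h'\rangle_{u_\gamma}$ gives the stated formula with the correct overall sign.

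The one step I would treat with care, and the only place where anything can go wrong, is the vanishing of the boundary term: it rests on the parameter ranges $\alpha,\beta>-1$ guaranteeing decay of $u_{\gamma+1}$ at $\partial\Omega$, and implicitly on $f,h$ being polynomials so that $f\,u_{\gamma+1}\,h$ inherits that decay. Once this is in hand, the remainder is a single application of the Pearson equation.
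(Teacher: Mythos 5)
Your argument is correct: the case-by-case check that $u_{\gamma+1}=p_2u_\gamma$ vanishes on $\partial\Omega$ (using $\alpha+1,\beta+1>0$ and exponential decay against polynomial growth), followed by a single integration by parts and the substitution $p_2u_\gamma'=p_{1,\gamma}u_\gamma$ from the Pearson equation, is exactly the intended derivation. The paper states this proposition without proof, treating it as an immediate consequence of the Pearson equation and the definition of the bilinear form, so your writeup simply supplies the omitted details; there is no gap.
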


We can matrix represent the action of derivation with the semi-infinite matrix
	\begin{align*}	D&:=\begin{pmatrix}
0 & 0 & 0 & 0 &\dots \\
1 & 0 & 0 & 0 &\dots \\
0 & 2 & 0 & 0 &\dots\\
0 & 0 & 3 & 0 &\dots\\
0 & 0 & 0 & 4 &\ddots\\
\vdots & \vdots & \vdots &\vdots 
\end{pmatrix}, &
D \chi(x)=\chi(x)'.
\end{align*}
Using $f(x)=\left(f_0,f_1,\dots \right)\chi(x)=\boldsymbol{f}^{\top}\chi(x)$ 
we can write
 	\begin{align*}
xf(x)&=\boldsymbol{f}^{\top} \Lambda \chi(x),  & f(x)'&=\boldsymbol{f}^{\top} D \chi(x).
\end{align*}

\begin{pro}[Symmetry of the moment matrix]
	The  Gram  matrices of the classical weights are linked by
\begin{align*}
D G_{\gamma+1}= -G_{\gamma} \Big(p_2'(\Lambda)+p_{1,\gamma}(\Lambda)+D p_2(\Lambda)\Big)^{\top}.
\end{align*}
\end{pro}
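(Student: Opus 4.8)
The plan is to reduce the claimed matrix identity to a family of scalar identities, one for each index pair $(i,j)$, and to read all of them off from the integration-by-parts formula of the Properties proposition specialized to the monomials $f(x)=x^i$ and $h(x)=x^j$. Since $G_\gamma=\int\chi(x)(\chi(x))^\top u_\gamma(x)\d x$ has entries $(G_\gamma)_{i,j}=\langle x^i,x^j\rangle_{u_\gamma}$, and since $u_{\gamma+1}=p_2u_\gamma$, every object appearing in the statement is a bilinear form of two monomials against $u_\gamma$, so an entrywise comparison will suffice.

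First I would rewrite the left-hand side. From the explicit shape of $D$, namely $D_{i,k}=i\,\delta_{k,i-1}$, one gets $(DG_{\gamma+1})_{i,j}=i\,(G_{\gamma+1})_{i-1,j}$. Because $(G_{\gamma+1})_{i-1,j}=\int x^{i-1}x^j u_{\gamma+1}\d x=\int x^{i-1}x^j p_2 u_\gamma\d x=\langle p_2 x^{i-1},x^j\rangle_{u_\gamma}$ and $(x^i)'=i x^{i-1}$, this yields the clean identification $(DG_{\gamma+1})_{i,j}=\langle p_2(x^i)',x^j\rangle_{u_\gamma}$, which is exactly the left-hand side of the Properties identity with $f=x^i$ and $h=x^j$.

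Next I would invoke the Properties proposition, $\langle p_2 f',h\rangle_{u_\gamma}=-\langle f,(p_2'+p_{1,\gamma})h\rangle_{u_\gamma}-\langle f,p_2 h'\rangle_{u_\gamma}$, and convert each right-hand term into matrix form. The elementary fact $q(\Lambda)\chi=q(x)\chi$ (from $\Lambda\chi=x\chi$) gives, for any polynomial $q$, the identity $\langle x^i,q(x)x^j\rangle_{u_\gamma}=\sum_l q(\Lambda)_{j,l}(G_\gamma)_{i,l}=(G_\gamma\,q(\Lambda)^\top)_{i,j}$; applied with $q=p_2'+p_{1,\gamma}$ this handles the first term. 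For the second term I would use $h'=(x^j)'=(D\chi)_j$ together with the reordering $p_2(x)(D\chi)_j=\sum_m D_{j,m}\,p_2(x)x^m=(D\,p_2(\Lambda)\chi)_j$, which turns $\langle x^i,p_2 h'\rangle_{u_\gamma}$ into $(G_\gamma\,(Dp_2(\Lambda))^\top)_{i,j}$. Assembling the two pieces and using $(p_2'+p_{1,\gamma})(\Lambda)=p_2'(\Lambda)+p_{1,\gamma}(\Lambda)$ gives $(DG_{\gamma+1})_{i,j}=-\big(G_\gamma(p_2'(\Lambda)+p_{1,\gamma}(\Lambda)+Dp_2(\Lambda))^\top\big)_{i,j}$ for all $i,j$, which is the claim.

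The main obstacle I anticipate is purely the bookkeeping of operator ordering and transposition rather than any analytic difficulty, the boundary terms having already been absorbed into the Properties identity. One must be careful that the factor $p_2$ multiplying $h'=(D\chi)_j$ produces $D\,p_2(\Lambda)$ and not $p_2(\Lambda)\,D$: the derivative matrix sits on the left because $D$ encodes $x^j\mapsto(x^j)'$ \emph{before} multiplication by $p_2(x)$, whereas multiplication by a polynomial in $x$ always appears through $\Lambda$ acting on $\chi$. Keeping track of which index is summed against $G_\gamma$, so that polynomial multiplication in the second slot becomes right multiplication by the transpose $q(\Lambda)^\top$, is the only spot where a stray transpose or sign could slip, so I would verify the $q(\Lambda)^\top$ conversion and the $Dp_2(\Lambda)$ reordering on a tiny example (e.g. Laguerre, $p_2(x)=x$) before declaring the computation complete.
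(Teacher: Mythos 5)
Your proof is correct: the entrywise reduction of $D G_{\gamma+1}$ to $\langle p_2 (x^i)', x^j\rangle_{u_\gamma}$, followed by the integration-by-parts identity of the preceding Properties proposition and the conversions $q(x)\chi=q(\Lambda)\chi$ and $p_2(x)\chi'(x)=D\,p_2(\Lambda)\chi(x)$, assembles exactly into the claimed identity, and your care about the ordering $D\,p_2(\Lambda)$ versus $p_2(\Lambda)D$ is warranted and handled correctly. The paper states this proposition without an explicit proof, but the Properties proposition is clearly placed immediately before it as the intended ingredient, so your argument is precisely the route the paper has in mind.
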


\begin{pro}[$LU$ factorization and classical weights]
The $LU$ factorization  of the Gram matrix of the classical weights leads to
\begin{align*}
S_{\gamma} D S_{\gamma+1}^{-1}&= -H_{\gamma}\Big(S_{\gamma+1}\big(p_2'(\Lambda)+p_{1,\gamma}(\Lambda)+Dp_2(\Lambda) \big)S_{\gamma}^{-1} \Big)^\top H_{\gamma+1}^{-1}.
\end{align*}
Equivalently,
\begin{align*}
(S_{\gamma})_{n+1,n}&=(n+1)\frac{B+nb}{A+2na}, &
(H_{\gamma})_n&=\frac{-n}{A_{\gamma}+(n-1)a}(H_{\gamma+1})_{n-1}.
\end{align*}
\end{pro}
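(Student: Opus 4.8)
The plan is to feed the symmetry relation of the preceding proposition, $D G_{\gamma+1}=-G_\gamma\big(p_2'(\Lambda)+p_{1,\gamma}(\Lambda)+D p_2(\Lambda)\big)^\top$, into the Cholesky factorizations $G_\gamma=S_\gamma^{-1}H_\gamma S_\gamma^{-\top}$ and $G_{\gamma+1}=S_{\gamma+1}^{-1}H_{\gamma+1}S_{\gamma+1}^{-\top}$. After substituting I would multiply on the left by $S_\gamma$ and on the right by $S_{\gamma+1}^\top H_{\gamma+1}^{-1}$: on the left-hand side the factors collapse through $S_{\gamma+1}^{-\top}S_{\gamma+1}^\top=\mathbb{I}$ and $H_{\gamma+1}H_{\gamma+1}^{-1}=\mathbb{I}$, leaving exactly $S_\gamma D S_{\gamma+1}^{-1}$, while on the right-hand side the transpose identity $(S_{\gamma+1}N S_\gamma^{-1})^\top=S_\gamma^{-\top}N^\top S_{\gamma+1}^\top$, with $N:=p_2'(\Lambda)+p_{1,\gamma}(\Lambda)+Dp_2(\Lambda)$, reassembles the first displayed formula. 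This is the only purely algebraic step and is routine.

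For the entrywise formulas I would first simplify $N$. Writing $p_2=ax^2+bx+c$ one finds $p_2'(\Lambda)+p_{1,\gamma}(\Lambda)=A_\gamma\Lambda+B_\gamma\mathbb{I}$, and a direct computation gives $D\Lambda=\diag(0,1,2,\dots)$, so that $Dp_2(\Lambda)=a(D\Lambda)\Lambda+b(D\Lambda)+cD$. Collecting by diagonals shows that $N$ is tridiagonal with $N_{i,i+1}=A_\gamma+ai$, $N_{i,i}=B_\gamma+bi$ and $N_{i,i-1}=ci$. The key structural observation comes next: the left-hand side $S_\gamma D S_{\gamma+1}^{-1}$ is a product of lower unitriangular matrices with the purely subdiagonal $D$, hence is strictly lower triangular; and because $N$ is tridiagonal, $M:=S_{\gamma+1}NS_\gamma^{-1}$ is lower Hessenberg, so the $M^\top$ on the right-hand side is upper Hessenberg. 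A matrix that is simultaneously strictly lower triangular and upper Hessenberg can only be supported on the first subdiagonal; thus both sides are purely subdiagonal, equivalently $M$ is purely superdiagonal.

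Reading off the single surviving diagonal now yields the recursions. On the subdiagonal the left entry is $(S_\gamma D S_{\gamma+1}^{-1})_{n+1,n}=n+1$ (all triangular diagonals equal $1$ and $D_{n+1,n}=n+1$), while the right entry is $-(H_\gamma)_{n+1}M_{n,n+1}(H_{\gamma+1})_n^{-1}$ with $M_{n,n+1}=N_{n,n+1}=A_\gamma+an$ by the band count; equating and shifting $n\mapsto n-1$ gives the stated formula for $(H_\gamma)_n$. Forcing instead the diagonal of $M$ to vanish gives $(B_\gamma+bn)-(A_\gamma+an)(S_\gamma)_{n+1,n}+(A_\gamma+a(n-1))(S_{\gamma+1})_{n,n-1}=0$.

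This last relation is where the main obstacle lies, since it couples the levels $\gamma$ and $\gamma+1$. To close it I would record the parameter shifts implied by $u_{\gamma+1}=p_2u_\gamma$: differentiating gives $p_{1,\gamma+1}=p_2'+p_{1,\gamma}$, whence $A_{\gamma+1}=A_\gamma+2a$ and $B_{\gamma+1}=B_\gamma+b$. The coupled relation then determines $(S_\gamma)_{n+1,n}$ recursively, lowering $n$ while raising $\gamma$, down to the $n=0$ case, which the same relation fixes as $(S_\gamma)_{1,0}=B_\gamma/A_\gamma$ since the would-be $n=-1$ term is absent. A direct substitution then verifies that $(S_\gamma)_{n+1,n}=(n+1)(B_\gamma+nb)/(A_\gamma+2na)$ solves the recursion---after inserting the shifts the bracketed combination of coefficients collapses to zero---and uniqueness of the recursion identifies it as the answer, with $A,B$ standing for $A_\gamma,B_\gamma$ in the stated form.
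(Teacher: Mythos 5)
The paper states this proposition without proof, and your argument supplies precisely the derivation its surrounding results set up: substituting the Cholesky factorizations into $DG_{\gamma+1}=-G_\gamma\big(p_2'(\Lambda)+p_{1,\gamma}(\Lambda)+Dp_2(\Lambda)\big)^\top$, noting that one side is strictly lower triangular while the other is upper Hessenberg so both collapse onto the first subdiagonal, and reading off entries. Your band bookkeeping, the vanishing-diagonal recursion, and its closure via the shifts $A_{\gamma+1}=A_\gamma+2a$, $B_{\gamma+1}=B_\gamma+b$ all check out.
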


\begin{pro}[Self-adjoint differential operator]
	For the classical weights we have a self-adjoint second order differential operator, i.e.,
	\begin{align*}
\left\langle \left[ p_2 \frac{\d^2}{\d x^2}+(p_2'+p_{1,_{\gamma}}) \frac{\d}{\d x}\right] f, h\right\rangle_{u_\gamma}&=
\left\langle f,\left[ p_2 \frac{\d^2}{\d x^2}+(p_2'+p_{1,_{\gamma}}) \frac{\d}{\d x}\right] h\right\rangle_{u_\gamma}.
\end{align*}
\end{pro}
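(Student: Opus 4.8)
The plan is to reduce the claim to a single integration by parts, after which both sides collapse to one manifestly symmetric expression. The only nontrivial input is the integration-by-parts identity already recorded in the Properties proposition, namely $\langle p_2 f',h\rangle_{u_\gamma}=-\langle f,(p_2'+p_1)h\rangle_{u_\gamma}-\langle f,p_2 h'\rangle_{u_\gamma}$, in which the boundary contribution has already been discarded thanks to the vanishing condition $p_2 u_\gamma\lvert_{\partial\Omega}=0$. Conceptually, the whole proposition is the statement that, by virtue of the Pearson equation $p_2 u_\gamma'=p_{1,\gamma}u_\gamma$, the operator $L:=p_2\,\d^2/\d x^2+(p_2'+p_{1,\gamma})\,\d/\d x$ is in Sturm--Liouville form, since $u_\gamma\,Lf=(p_2 u_\gamma f')'$; I would keep this identity in mind as the reason the computation works, but carry out the argument through the IBP relation just stated.

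First I would expand the left-hand side by bilinearity, writing $\langle Lf,h\rangle_{u_\gamma}=\langle p_2 f'',h\rangle_{u_\gamma}+\langle (p_2'+p_1)f',h\rangle_{u_\gamma}$. Next I would apply the Properties identity to the second-derivative term, with $f$ replaced by $f'$, obtaining $\langle p_2 f'',h\rangle_{u_\gamma}=-\langle f',(p_2'+p_1)h\rangle_{u_\gamma}-\langle f',p_2 h'\rangle_{u_\gamma}$. Finally I would use that the bilinear form is mere integration against $u_\gamma$, so that scalar polynomial factors may be transferred freely between its two slots; in particular $\langle (p_2'+p_1)f',h\rangle_{u_\gamma}=\langle f',(p_2'+p_1)h\rangle_{u_\gamma}$. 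Substituting, the two first-order terms cancel exactly, leaving $\langle Lf,h\rangle_{u_\gamma}=-\langle p_2 f',h'\rangle_{u_\gamma}=-\int p_2\,f'h'\,u_\gamma\,\d x$.

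Since the resulting expression is symmetric under $f\leftrightarrow h$, running the identical reduction on $\langle f,Lh\rangle_{u_\gamma}$ produces the very same integral, which establishes the asserted self-adjointness. I do not expect a genuine obstacle here: the argument is essentially one integration by parts, and all of its content is front-loaded into the Sturm--Liouville shape of $L$ (equivalently, the Pearson equation) and into the boundary vanishing $p_2u_\gamma\lvert_{\partial\Omega}=0$. The only point demanding care is verifying that the two first-order contributions cancel outright rather than merely combine, and confirming that the boundary term may legitimately be dropped — both of which are precisely what the earlier Properties proposition delivers.
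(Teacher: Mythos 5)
Your argument is correct, and it uses exactly the tool the paper sets up for this purpose: the paper states this proposition without an explicit proof, but the immediately preceding ``Properties'' proposition (the integration-by-parts identity $\langle p_2 f',h\rangle_{u_\gamma}=-\langle f,(p_2'+p_1)h\rangle_{u_\gamma}-\langle f,p_2h'\rangle_{u_\gamma}$, which already packages the Pearson equation and the boundary vanishing $p_2u_\gamma\lvert_{\partial\Omega}=0$) is evidently the intended one-line route, and you apply it precisely as intended, reducing both sides to the symmetric expression $-\langle p_2f',h'\rangle_{u_\gamma}$. The cancellation of the first-order terms and the free transfer of scalar factors between the two slots are both legitimate here since the form is plain integration against $u_\gamma$ with no conjugation, so there is no gap.
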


\begin{pro}[Semi-infinite matrix version]
	The matrices of classical moments enjoy the following additional symmetry given by the matrix representation of
	a linear second-order differential operator with polynomial coefficients
		\begin{align*}
	[D^2(a\Lambda^2+b\Lambda+c)+D(A_{\gamma}\Lambda+B_{\gamma})] G_{\gamma}= G_{\gamma} \left[ D^2(a\Lambda^2+b\Lambda+c)+D(A_{\gamma}\Lambda+B_{\gamma}) \right]^\top.
	\end{align*}
\end{pro}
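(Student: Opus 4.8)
The plan is to read this identity as the matrix transcription of the self-adjoint differential operator proposition proved just above. Writing $\mathscr{L}:=p_2\frac{\d^2}{\d x^2}+(p_2'+p_{1,\gamma})\frac{\d}{\d x}$ for that second-order operator and $\mathcal{M}:=D^2(a\Lambda^2+b\Lambda+c)+D(A_{\gamma}\Lambda+B_{\gamma})$ for the matrix appearing in the statement, I would first show that $\mathcal{M}$ is exactly the matrix representing $\mathscr{L}$ on the monomial basis, i.e.\ $\mathscr{L}\chi(x)=\mathcal{M}\chi(x)$, and then feed the self-adjointness of $\mathscr{L}$ through the Gram pairing to obtain $\mathcal{M}G_{\gamma}=G_{\gamma}\mathcal{M}^{\top}$.

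For the first step I would compute $\mathscr{L}\chi(x)$ directly. Using $\chi'(x)=D\chi(x)$ and $\chi''(x)=D^2\chi(x)$, together with the elementary fact that for $p_2=ax^2+bx+c$ and $p_{1,\gamma}=(A_{\gamma}-2a)x+(B_{\gamma}-b)$ one has $p_2'(x)+p_{1,\gamma}(x)=A_{\gamma}x+B_{\gamma}$, I get $\mathscr{L}\chi(x)=p_2(x)D^2\chi(x)+(A_{\gamma}x+B_{\gamma})D\chi(x)$. The one point that needs care is rewriting a scalar polynomial times a derivative vector as a constant matrix acting on $\chi$. Since $p_2(x)$ is a scalar it commutes with the constant matrix $D^2$, and because $\Lambda\chi(x)=x\chi(x)$ gives $p_2(\Lambda)\chi(x)=p_2(x)\chi(x)$, I can slide the scalar inside to obtain $p_2(x)D^2\chi(x)=D^2p_2(\Lambda)\chi(x)$; note the ordering, $D^2$ stays to the left of $p_2(\Lambda)$. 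Likewise $(A_{\gamma}x+B_{\gamma})D\chi(x)=D(A_{\gamma}\Lambda+B_{\gamma})\chi(x)$, and adding the two pieces yields $\mathscr{L}\chi(x)=\mathcal{M}\chi(x)$, identifying $\mathcal{M}$ with the matrix written in the statement.

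For the second step I would pair against the monomials. Evaluating the self-adjointness $\langle\mathscr{L}[x^i],x^j\rangle_{u_\gamma}=\langle x^i,\mathscr{L}[x^j]\rangle_{u_\gamma}$ and expanding $\mathscr{L}[x^i]=(\mathcal{M}\chi(x))_i=\sum_k\mathcal{M}_{i,k}x^k$, the left-hand side becomes $\sum_k\mathcal{M}_{i,k}(G_{\gamma})_{k,j}=(\mathcal{M}G_{\gamma})_{i,j}$, while the right-hand side, after the analogous expansion of $\mathscr{L}[x^j]$, becomes $\sum_k(G_{\gamma})_{i,k}\mathcal{M}_{j,k}=(G_{\gamma}\mathcal{M}^{\top})_{i,j}$. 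Since this holds for every $i,j$ the matrix identity $\mathcal{M}G_{\gamma}=G_{\gamma}\mathcal{M}^{\top}$ follows, which is precisely the assertion. Equivalently one can run the argument in one stroke by sandwiching $\mathscr{L}$ between $\chi(x)$ and $\chi(y)^{\top}$, using linearity of the Gram pairing to pull the constant matrix $\mathcal{M}$ out on whichever side the operator acts.

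I expect no serious obstacle here: the content is entirely in correctly turning the operator $\mathscr{L}$ into the matrix $\mathcal{M}$. The only place an error can creep in is the scalar-versus-matrix bookkeeping of the step $p_2(x)D^2\chi(x)=D^2p_2(\Lambda)\chi(x)$, where one must keep $D^2$ to the left of $p_2(\Lambda)$ and remember that it is $\Lambda$, not $D$, that realizes multiplication by $x$; getting this ordering wrong would spoil the transpose structure on the right-hand side. Once $\mathscr{L}\chi=\mathcal{M}\chi$ is established, the remainder is a formal consequence of the previously proved self-adjointness.
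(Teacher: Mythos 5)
Your argument is correct and is precisely the route the paper intends: the proposition is stated without an explicit proof, but its phrasing (``the matrix representation of a linear second-order differential operator'') together with the immediately preceding self-adjointness proposition points exactly to showing $\mathscr L\chi(x)=\bigl[D^2p_2(\Lambda)+D(A_\gamma\Lambda+B_\gamma)\bigr]\chi(x)$ and then transporting $\langle \mathscr L f,h\rangle_{u_\gamma}=\langle f,\mathscr L h\rangle_{u_\gamma}$ through the Gram pairing. You also correctly identify the one delicate point, namely that $p_2(\Lambda)$ must sit to the right of $D^2$ so that it acts on $\chi$ first (where $\Lambda\chi=x\chi$ applies), which is consistent with the ordering in the statement.
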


Observe that calling $M:=S D S^{-1}$, which is a  strictly lower triangular matrix  with first subdiagonal the sequence of natural numbers  fulfilling $[J,M]=I$, the above relation leads to
\begin{align*}
	[M^2(a J^2+bJ+c)+M(A_{\gamma}J+B_{\gamma})] = \left[ M^2(aJ^2+bJ+c)+M(A_{\gamma}J+B_{\gamma}) \right]^\top.
\end{align*}

\begin{pro}[Diagonalizing the self-adjoint differential operator]
\begin{align*}
N_{\gamma}:=S_{\gamma} \left[ D^2(a\Lambda^2+b\Lambda+c)+D(A_{\gamma}\Lambda+B_{\gamma})\right] S_{\gamma}^{-1}= \begin{pmatrix}
0             & 0                   & 0                    & 0               & 0                &\dots\\
0             & A_{\gamma}          & 0                    & 0               & 0                &\dots\\
0             & 0                   & 2(A_{\gamma}+a)      & 0               & 0                &\dots\\
0             & 0                   & 0                    &3(A_{\gamma}+2a) & 0                &\dots\\
0             & 0                   & 0                    & 0               &       \ddots     &      \\
\vdots        & \vdots              & \vdots               &\vdots 
\end{pmatrix}.
\end{align*}
The diagonal  coefficients, $(N_{\gamma})_n=n(A_\gamma+(n-1)a)$, are the eigenvalues of the sequence of classical orthogonal polynomials, being these one the corresponding eigenfunctions
\begin{align*}
F_{\gamma}&:=p_2 \frac{\d^2}{\d x^2}+(p_2'+p_{1,_{\gamma}}) \frac{\d}{\d x}  &     &\Longrightarrow &
F_{\gamma} \left[P_{\gamma}(x)\right]&=N_{\gamma} P_{\gamma}(x).
\end{align*}
In particular, for the three classical families, we have
\begin{itemize}
\item \textbf{Hermite}. $\dfrac{\d u}{\d x}=-2x u$; 
$A=-2$, $B=0$, $a=0$, $b=0$, $c=1$;  $H_0=\sqrt{\pi}$; 
\begin{align*}
S_{n+1,n}&=0, &  
H_n&=\sqrt{\pi}\frac{n!}{2^n}, & 
N_n&=-n.
\end{align*}
\item \textbf{Laguerre}. $x\dfrac{\d u_{\alpha}}{\d x}=(\alpha-x) u_{\alpha}$; 
$A_{\alpha}=-1,\,\,B_{\alpha}=(1+\alpha)$, $a=0$, $b=1$, $c=0$; 
$(h_{\alpha})_0=\Gamma(\alpha+1)$; 
\begin{align*}
(S_{\alpha})_{n+1,n}&=-(n+1)[(n+1)+\alpha], &
(H_{\alpha})_n&=n!\Gamma(\alpha+n+1), &
(N_\alpha)_n&=-2n.
\end{align*}
\item \textbf{Jacobi}. 
$(1-x^2)\dfrac{\d u_{\alpha,\beta}}{\d x}=-[(\alpha-\beta)+(\alpha+\beta)x] u_{\alpha,\beta}$; 
$A_{\alpha,\beta}=-[(\beta+\alpha)+2]$, $B_{\alpha,\beta}=-(\alpha-\beta)$, $a=-1$, $b=0$, $c=1$;
$(H_{\alpha,\beta})_0=\dfrac{\Gamma(\alpha+1)\Gamma(\beta+1)2^{\alpha+\beta+1}}{(\alpha+\beta+1) \Gamma(\alpha+\beta+1)}$;
\begin{align*}
(S_{\alpha,\beta})_{n+1,n}&=\frac{(n+1)(\alpha-\beta)}{(\alpha+\beta+2)+2n}\\
(H_{\alpha,\beta})_n&=n!2^{(\alpha+\beta+2n+1)} \frac{\Gamma(\alpha+\beta+n+1)\Gamma(\alpha+n+1)\Gamma(\beta+n+1)}{(\alpha+\beta+2n+1)\Gamma^{2}(\alpha+\beta+2n+1)}\\
(N_{\alpha,\beta})_n&=-n(\beta-\alpha+1+n).
\end{align*}
\end{itemize}
\end{pro}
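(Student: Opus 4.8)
The plan is to read $N_\gamma$ as the matrix of the differential operator $F_\gamma = p_2 \tfrac{\d^2}{\d x^2} + (p_2' + p_{1,\gamma})\tfrac{\d}{\d x}$ acting on the basis of classical polynomials, and then to deduce that it is diagonal by combining a triangularity statement with the self-adjointness already encoded in the moment matrix. First I would check that the semi-infinite matrix $\mathcal D_\gamma := D^2(a\Lambda^2 + b\Lambda + c) + D(A_\gamma\Lambda + B_\gamma)$ represents $F_\gamma$ on the monomial basis, i.e.\ $\mathcal D_\gamma\chi(x) = F_\gamma[\chi(x)]$ componentwise. This uses $\Lambda\chi(x) = x\chi(x)$, $D\chi(x) = \chi'(x)$, the bookkeeping identities $D^2 p_2(\Lambda)\chi = p_2(x)\chi''$ and $D(A_\gamma\Lambda+B_\gamma)\chi = (A_\gamma x + B_\gamma)\chi'$, and the Pearson relation $p_2' + p_{1,\gamma} = A_\gamma x + B_\gamma$. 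Since $P_\gamma = S_\gamma\chi$ with $S_\gamma$ a constant matrix commuting with $F_\gamma$, this gives $F_\gamma[P_\gamma] = S_\gamma\mathcal D_\gamma\chi = S_\gamma\mathcal D_\gamma S_\gamma^{-1}P_\gamma = N_\gamma P_\gamma$, so the eigenvalue relation holds by construction and the whole content of the statement is that $N_\gamma$ is diagonal.

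Next I would establish two independent structural facts. The first is triangularity: because $p_2 P_{\gamma,n}''$ and $(p_2'+p_{1,\gamma})P_{\gamma,n}'$ both have degree at most $n$, the polynomial $F_\gamma[P_{\gamma,n}]$ lies in the span of $P_{\gamma,0},\dots,P_{\gamma,n}$, so $N_\gamma$ is lower triangular. The second is a symmetry: starting from the earlier proposition $\mathcal D_\gamma G_\gamma = G_\gamma\mathcal D_\gamma^\top$ and substituting the Cholesky factorization $G_\gamma = S_\gamma^{-1}H_\gamma S_\gamma^{-\top}$, I would left-multiply by $S_\gamma$ and right-multiply by $S_\gamma^\top$ to obtain $N_\gamma H_\gamma = H_\gamma N_\gamma^\top$.

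Then I would combine the two: since $H_\gamma$ is diagonal, $N_\gamma H_\gamma$ is lower triangular, while $H_\gamma N_\gamma^\top$ is upper triangular; as they coincide, $N_\gamma H_\gamma$ — and hence $N_\gamma$ — is diagonal. Finally, reading off the eigenvalues amounts to comparing the coefficient of $x^n$ on both sides of $F_\gamma[P_{\gamma,n}] = (N_\gamma)_n P_{\gamma,n}$ for the monic $P_{\gamma,n} = x^n + \cdots$: the top-degree contributions come from $a x^2 \cdot n(n-1)x^{n-2}$ and $A_\gamma x \cdot n x^{n-1}$, giving $(N_\gamma)_n = n\big(A_\gamma + (n-1)a\big)$. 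The three classical lines then follow by substituting the tabulated $(a,b,c,A_\gamma,B_\gamma)$ into this formula and into the earlier expressions for $(S_\gamma)_{n+1,n}$ and $(H_\gamma)_n$.

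I expect the only delicate point to be the matrix-representation step: one must respect the operator ordering and verify that $\mathcal D_\gamma$ applied to the column $\chi(x)$ reproduces $F_\gamma$ acting componentwise, rather than differentiating $p_2(\Lambda)\chi$ as a product. Once that identification is secured, the lower-triangular-plus-$H$-symmetric argument forces diagonality with essentially no further computation, and the eigenvalue formula drops out of a single leading-coefficient comparison.
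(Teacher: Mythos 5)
Your argument is correct and matches the route the paper sets up: the two preceding propositions supply exactly your ingredients, namely the matrix-representation/symmetry relation $\big[D^2(a\Lambda^2+b\Lambda+c)+D(A_\gamma\Lambda+B_\gamma)\big]G_\gamma=G_\gamma\big[\cdots\big]^\top$, which conjugates under the Cholesky factorization to $N_\gamma H_\gamma=H_\gamma N_\gamma^\top$, combined with the degree count giving lower triangularity, so diagonality and the eigenvalue $n\big(A_\gamma+(n-1)a\big)$ follow exactly as you describe. One small remark on the statement rather than on your proof: your general formula yields $-2n$ for Hermite and $-n$ for Laguerre, so the tabulated values $N_n=-n$ (Hermite) and $(N_\alpha)_n=-2n$ (Laguerre) in the proposition appear to be swapped, and the Jacobi entry should read $-n(\alpha+\beta+1+n)$.
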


\section{Christoffel and Geronimus transformations}

\subsection{Some history}

Three perturbations have attracted the interest of the researchers.
\begin{wrapfigure}{l}{0.25\textwidth}
	\includegraphics[width=0.25\textwidth]{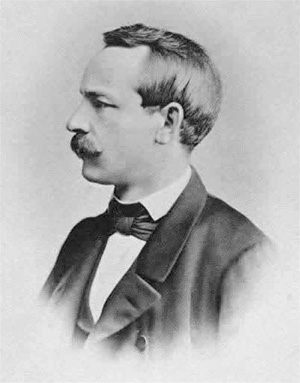}
\end{wrapfigure}
 Christoffel perturbations, that  appear when you consider a new functional $\hat{u}= p(x) u$, where $p(x)$ is a polynomial, 
were \textbf{studied in 1858 by the  German mathematician E. B.  Christoffel}  in \cite{Christoffel}
in the framework of Gaussian quadrature rules.

Christoffel  found  explicit formulas relating the corresponding sequences of orthogonal polynomials with respect to two measures,  the Lebesgue measure  $\d\mu$ supported in the interval $(-1,1)$ and  $d\hat{\mu}(x)= p(x) d\mu(x)$, with $p(x)=(x-q_1)\cdots(x-q_N)$ a signed polynomial in the support of $\d\mu$,  as well as the distribution of their zeros as nodes in such quadrature rules. Nowadays, these are  called\textbf{ Christoffel formulas}, and can be considered  a classical result in the theory of orthogonal polynomials which can be found in a number of  textbooks, see for example 
Chihara, Szeg\H{o} or Gautschi.

In the theory of orthogonal polynomials,  \textbf{connection formulas} between two families of orthogonal polynomials allow to express any polynomial of a given degree $n$ as a linear combination of all polynomials of degree less than or equal to $n$ in the second family. A noteworthy  fact regarding  the Christoffel finding is that in  this case the number of terms does not grow with the degree $n$ but remarkably,  and on the contrary,  remain constant,  equal to the degree of the perturbing polynomial.

\begin{wrapfigure}{r}{0.2\textwidth}
	\includegraphics[width=0.2\textwidth]{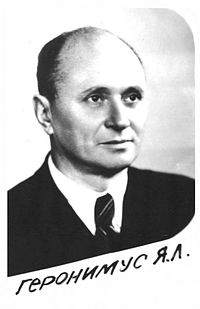}
\end{wrapfigure}Geronimus transformation appears when you are dealing with perturbed functionals $v$ defined by $p(x) v=u,$ where $p(x)$ is a polynomial. Such a kind of transformations were used by the \textbf{Russian mathematician J. L. Geronimus} in
\cite{Geronimus}
in order to have a nice proof of a result by W. Hahn  concerning the characterization of classical orthogonal polynomials (Hermite, Laguerre, Jacobi, and Bessel) as those orthogonal polynomials whose first derivatives are also orthogonal polynomials,  for an English account of Geronimus' paper  see \cite{Golinskii}.

Again, as happened  for the Christoffel transformation, within the Geronimus transformation one can find Christoffel type formulas, now in terms of the second kind functions, relating the corresponding sequences of orthogonal polynomials,   for example the work of P. Maroni  studied this situation for a  perturbation of the type $p(x)=x-a$.

Despite that in the paper by Geronimus no Christoffel type formula was derived,  in order to distinguish these Christoffel type formulas from those for Christoffel transformations,  we refer to them as Christoffel--Geronimus. formulas.

The more general problem related to linear functionals $u$ and $v$ satisfying $p(x)u= q(x)v,$ where $p(x), q(x)$ are polynomials has been analyzed the Russian mathematician V. B. Uvarov back in 1969   \cite{Uvarov1969connection} see also \cite{Zhedanov1997Rational} where the term linear spectral was given.

Uvarov found Christoffel type formulas,  see \cite{Uvarov1969connection},that allow for any pair of perturbing polynomials $p(x) $ and $q(x)$, to find the new orthogonal polynomials in terms of determinantal expressions of the original unperturbed second kind functions and orthogonal polynomials. On the other hand, the addition of a finite number of Dirac masses to a linear functional appears in the framework of the spectral analysis of fourth order linear differential operators with polynomial coefficients and with orthogonal polynomials as eigenfunctions. Therein you have the so called Laguerre-type, Legendre-type and Jacobi-type orthogonal polynomials introduced by H. L. Krall.

A more general analysis from the point of view of the algebraic properties of the sequences of orthogonal polynomials associated to the linear functionals $u$ and $w= u + \sum_{n=0}^{N} M_{n} \delta(x- a_{n})$, the so-called general Uvarov transformation by Zhedanov, see \cite{Zhedanov1997Rational}, has been done for the positive definite case by Uvarov.

\subsection{Christoffel and Geronimus transformations}
\begin{defi}[Christoffel transformations and the Schwartz kernel]
Given a   polynomial $W_C(x)$ of degree $N_C$,  and a generalized kernel $u_{x,y}\in \mathcal O_c'$  a  linear spectral or Geronimus--Uvarov transformation $\hat u_{x,y}$ of  $ u_{x,y}$ is a matrix of generalized kernels such that
\begin{align*}
\hat u_{x,y} =W_C(x)u_{x,y}.
\end{align*}
\end{defi}

\begin{pro}[Christoffel transformation and the bilinear form and Gram matrix]
	The perturbed Gram  matrix $\hat G:=\langle \chi(x),\chi(y)\rangle_{\hat u}$ and the original one $G$ satisfy
	\begin{align*}
	\hat  G =W_C(\Lambda)G.
	\end{align*}
	The sequilinear forms are related by
	\begin{align*}
	\prodint{P(x),  Q(y)W_G(y)}_{\hat u}=\prodint{ P(x)W_C(x), Q(y)}_u.
	\end{align*}
\end{pro}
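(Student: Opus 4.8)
The plan is to obtain both identities by unfolding the definition $\hat u_{x,y}=W_C(x)u_{x,y}$ and combining it with the spectral property $\Lambda\chi(x)=x\chi(x)$ established earlier; no analytic input beyond the continuity of the bilinear form should be required.

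For the Gram matrix relation I would start from $\hat G=\prodint{\chi(x),(\chi(y))^\top}_{\hat u}$ and use the definition of the perturbed kernel to rewrite the perturbed form as the original one acting on the monomial vector premultiplied by the perturbing polynomial, i.e. $\hat G=\prodint{W_C(x)\chi(x),(\chi(y))^\top}_u$. The crucial step is the identity $W_C(\Lambda)\chi(x)=W_C(x)\chi(x)$, which follows at once from $\Lambda\chi(x)=x\chi(x)$ by raising to powers and taking the linear combination dictated by the coefficients of $W_C$. Substituting this, the matrix $W_C(\Lambda)$ has entries independent of $x$ and $y$, so by the left-linearity of the bilinear form it factors out of the first slot, giving $\hat G=W_C(\Lambda)\prodint{\chi(x),(\chi(y))^\top}_u=W_C(\Lambda)G$.

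For the sesquilinear form relation the mechanism is to transfer multiplication operators across the pairing $\prodint{\cdot,\cdot}_{\hat u}=\langle\hat u_{x,y},\cdot\otimes\cdot\rangle$. Moving the factor attached to the second argument onto the kernel and then invoking the defining relation between $\hat u$ and $u$ converts a multiplication in the variable $y$ into a multiplication in $x$ acting on $u_{x,y}$, after which transferring this last operator back onto the first argument produces the perturbing polynomial on $P$. Concretely, pairing $\hat u_{x,y}$ with $P(x)\otimes Q(y)$ and using $\langle W_C(x)u_{x,y},\varphi\rangle=\langle u_{x,y},W_C(x)\varphi\rangle$ yields $\prodint{P(x),Q(y)}_{\hat u}=\prodint{P(x)W_C(x),Q(y)}_u$; the factor $W_G(y)$ appearing on the left of the stated identity is the companion factor in the $y$-variable coming from the general Geronimus--Uvarov relation between the two kernels, and it is absorbed by the same transfer once that relation is used in place of the purely Christoffel one.

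I expect the only genuine point to justify to be the legitimacy of moving the multiplication operator through the pairing, namely that the product $W_C(x)u_{x,y}$ of a polynomial with a generalized kernel in $\mathcal O_c'$ acts by $\langle W_C(x)u_{x,y},\varphi\rangle=\langle u_{x,y},W_C(x)\varphi\rangle$; this is simply the definition of that product and needs nothing beyond the continuity of $\prodint{\cdot,\cdot}$. The remaining care is bookkeeping: matching each perturbing polynomial to the correct argument and, in the matrix-valued setting, preserving the order of the noncommutative factors when extracting $W_C(\Lambda)$ from the form.
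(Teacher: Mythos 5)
Your proof is correct; the paper in fact states this proposition without giving any proof, and your argument --- unfolding $\hat u_{x,y}=W_C(x)u_{x,y}$, using $W_C(\Lambda)\chi(x)=W_C(x)\chi(x)$ (which follows from the spectral property $\Lambda\chi(x)=x\chi(x)$ and the fact that each entry of $W_C(\Lambda)\chi(x)$ is a finite linear combination of monomials), and then factoring the constant banded matrix $W_C(\Lambda)$ out of the first slot by linearity and continuity --- is exactly the standard computation the author relies on here and in the analogous Geronimus and linear spectral propositions. You are also right to flag the factor $W_G(y)$ on the left of the displayed sesquilinear identity: it is a leftover from the general Geronimus--Uvarov statement that appears verbatim later in the paper, and in the pure Christoffel case one should read $W_G=1$, so that the identity reduces to $\prodint{P(x),Q(y)}_{\hat u}=\prodint{P(x)W_C(x),Q(y)}_u$, which is what your transfer of the multiplication operator through the pairing establishes.
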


\begin{defi}[Geronimus transformations and the Schwartz  kernel]
Given a  generalized kernel $u_{x,y}\in\mathcal O_c'$ with a given support $\operatorname{supp} u_{x,y}$, and a  polynomial $W(y)\in\mathbb C[y]$ of degree $N$,  such that $ \sigma(W(y))\cap \operatorname{supp}_y(u)=\varnothing$, a bivariate  generalized function $\check u_{x,y}$ is said to be a  Geronimus transformation of the  generalized kernels $u_{x,y}$ if
\begin{align*}
\check u_{x,y}W(y)=u_{x,y}.
\end{align*}
\end{defi}

\begin{pro}[Geronimus transformation and the bilinear form and Gram matrix]
In terms of  bilinear forms  a Geronimus transformation  fulfills
\begin{align*}
\prodint{P(x), Q(y)W(y)}_{\check u}=\prodint{P(x), Q(y)}_{u},
\end{align*}
while, in terms of the corresponding Gram matrices, satisfies
\begin{align*}
\check G W(\Lambda^\top)=G.
\end{align*}
\end{pro}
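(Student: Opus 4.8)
The plan is to handle the two assertions separately, deriving the Gram-matrix identity from the bilinear-form identity, in parallel with the Christoffel statement, where the perturbation acts in $x$ and $W_C(\Lambda)$ multiplies from the left; here the perturbation acts in $y$ and the corresponding object $W(\Lambda^\top)$ will multiply from the right.

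For the first identity, I would unfold the definition of the bilinear form attached to a Schwartz kernel, namely $\prodint{P(x),Q(y)}_{\check u}=\langle\check u_{x,y},P(x)\otimes Q(y)\rangle$. Applying this to the pair $(P(x),Q(y)W(y))$ gives
\begin{align*}
\prodint{P(x),Q(y)W(y)}_{\check u}=\langle\check u_{x,y},P(x)\otimes Q(y)W(y)\rangle.
\end{align*}
Since $W(y)$ depends only on $y$, I can absorb it into the kernel and invoke the defining relation $\check u_{x,y}W(y)=u_{x,y}$ of the Geronimus transformation, obtaining $\langle\check u_{x,y}W(y),P(x)\otimes Q(y)\rangle=\langle u_{x,y},P(x)\otimes Q(y)\rangle=\prodint{P(x),Q(y)}_u$, which is the claimed bilinear-form identity.

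For the Gram-matrix identity, I would specialize the previous step to the monomial basis. Writing $G=\prodint{\chi(x),(\chi(y))^\top}_u$ and applying the bilinear-form identity entrywise with $P(x)=x^k$ and $Q(y)=y^l$, I get $G=\prodint{\chi(x),W(y)(\chi(y))^\top}_{\check u}$. The pivotal step is to transfer multiplication by $W(y)$ onto the shift matrix: from the spectral property $\Lambda\chi(y)=y\chi(y)$ one has $W(\Lambda)\chi(y)=W(y)\chi(y)$, and transposing this relation yields $W(y)(\chi(y))^\top=(\chi(y))^\top W(\Lambda^\top)$. Substituting and pulling the constant matrix $W(\Lambda^\top)$ out of the right slot of the bilinear form, which is legitimate by the right-linearity property (2) applied entrywise, I arrive at
\begin{align*}
G=\prodint{\chi(x),(\chi(y))^\top}_{\check u}W(\Lambda^\top)=\check G\,W(\Lambda^\top).
\end{align*}

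The computation is essentially routine once the setup is in place; the one point that deserves care is the bookkeeping of the transpose, that is, verifying $W(y)(\chi(y))^\top=(\chi(y))^\top W(\Lambda^\top)$ component by component rather than merely by analogy with the Christoffel case, since the perturbation acts in the $y$ variable and therefore from the right by $W(\Lambda^\top)$ instead of from the left by $W(\Lambda)$. A secondary subtlety is that all manipulations take place at the level of the generalized kernel, so the support hypothesis $\sigma(W(y))\cap\operatorname{supp}_y(u)=\varnothing$ is what guarantees that the Geronimus transformation $\check u$ exists and that multiplying it by $W(y)$ faithfully recovers $u$; I would note this explicitly to justify reading the defining relation in both directions.
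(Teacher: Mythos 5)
Your argument is correct, and it is the natural one: the paper states this proposition without proof, and your computation is exactly the routine verification the author intends, mirroring the Christoffel and linear-spectral analogues (perturbation in $y$ acting from the right via $W(\Lambda^\top)$, justified by transposing $W(\Lambda)\chi(y)=W(y)\chi(y)$). The entrywise check that $(\chi(y))^\top W(\Lambda^\top)=W(y)(\chi(y))^\top$ and the remark on the support hypothesis are both sound; nothing is missing.
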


\begin{defi}[Linear spectral transformations and the Schwartz kernel]
Given two   polynomials $W_C(x),W_G(y)$ of degrees $N_C,N_G$,  and a generalized kernel $u_{x,y}\in \mathcal O_c'$ such that $\sigma(W_G(y)))\cap \operatorname{supp}_y(u)=\varnothing$, a matrix linear spectral or Geronimus--Uvarov transformation $\hat u_{x,y}$ of  $ u_{x,y}$ is a matrix of generalized kernels such that
\begin{align}
\hat u_{x,y} W_G(y)=W_C(x)u_{x,y}.
\end{align}
\end{defi}

\begin{pro}[Linear spectral transformation and the bilinear form and Gram matrix]
The perturbed Gram  matrix $\hat G:=\langle \chi(x),\chi(y)\rangle_{\hat u}$ and the original one $G$ satisfy
\begin{align*}
\hat  G W_G(\Lambda^\top)=W_C(\Lambda)G.
\end{align*}
The sequilinear forms are related by
\begin{align*}
\prodint{P(x),  Q(y)W_G(y)}_{\hat u}=\prodint{ P(x)W_C(x), Q(y)}_u.
\end{align*}
\end{pro}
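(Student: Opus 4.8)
The statement to prove concerns the linear spectral (Geronimus--Uvarov) transformation. The plan is to establish both claims by reducing them to the two transformations already treated: the Christoffel transformation (with $\hat G = W_C(\Lambda)G$) and the Geronimus transformation (with $\check G\, W(\Lambda^\top) = G$). The defining relation $\hat u_{x,y}\,W_G(y) = W_C(x)\,u_{x,y}$ should be viewed as a simultaneous one-sided perturbation in each variable, so the key is to translate the action of $W_C(x)$ and $W_G(y)$ on the kernel into matrix multiplication by $W_C(\Lambda)$ on the left and $W_G(\Lambda^\top)$ on the right of the Gram matrix.

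The first step I would carry out is the bilinear-form identity, since it is essentially a restatement of the definition. Using the notation $\prodint{P(x),Q(y)}_{\hat u} = \langle \hat u_{x,y}, P(x)\otimes Q(y)\rangle$ and the basic property that multiplying the kernel by a polynomial in $y$ is adjoint to multiplying $Q$ by that polynomial (and similarly for $x$), I would compute
\begin{align*}
\prodint{P(x), Q(y)W_G(y)}_{\hat u}
= \langle \hat u_{x,y} W_G(y), P(x)\otimes Q(y)\rangle
= \langle W_C(x) u_{x,y}, P(x)\otimes Q(y)\rangle
= \prodint{P(x)W_C(x), Q(y)}_u,
\end{align*}
which is exactly the desired relation. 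The only subtlety here is justifying that the pairing of the generalized kernel against the polynomial moves $W_G(y)$ onto $Q$ and $W_C(x)$ onto $P$; this follows from the distributional definition of $\langle u_{x,y}, \cdot\otimes\cdot\rangle$ and requires only that $W_G$ avoids the $y$-support so that everything is well defined, which is precisely the hypothesis $\sigma(W_G(y))\cap\operatorname{supp}_y(u)=\varnothing$.

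The second step derives the Gram-matrix relation. I would specialize the bilinear identity to the monomial basis, taking $P(x)=x^k$ and $Q(y)=y^l$, so that the entries of the Gram matrices appear directly. Recalling from the spectral properties that $\Lambda\chi(x)=x\chi(x)$ and $\Lambda^\top$ acts on the $y$-index, multiplication of $\chi(y)$ by $W_G(y)$ corresponds to the matrix $W_G(\Lambda^\top)$ acting from the right, while multiplication of $\chi(x)$ by $W_C(x)$ corresponds to $W_C(\Lambda)$ from the left. Assembling the identity $\prodint{\chi(x),\chi(y)^\top W_G(y)}_{\hat u} = \prodint{\chi(x)^\top W_C(x),\chi(y)}_u$ in matrix form then yields
\begin{align*}
\hat G\, W_G(\Lambda^\top) = W_C(\Lambda)\, G.
\end{align*}
I expect the main obstacle to be purely notational: keeping careful track of which variable each shift matrix acts on, and verifying that $W_G(y)\chi(y) = W_G(\Lambda^\top)^\top$-type identities are transcribed with the transpose on the correct side. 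Once the two already-established special cases ($W_G=1$ giving Christoffel and $W_C=1$ giving Geronimus) are recognized as limits of this computation, no genuinely new analytic difficulty arises, and the proof reduces to the bookkeeping above.
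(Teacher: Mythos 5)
Your proof is correct and takes essentially the approach the paper intends: the paper states this proposition (like its Christoffel and Geronimus analogues) without a written proof, and the natural derivation is exactly yours. The two steps — obtaining the bilinear identity directly from the defining relation $\hat u_{x,y}W_G(y)=W_C(x)u_{x,y}$, then specializing to the monomial vector $\chi$ and using $\Lambda\chi(x)=x\chi(x)$ together with $W_G(\Lambda)^\top=W_G(\Lambda^\top)$ to pass to $\hat G\,W_G(\Lambda^\top)=W_C(\Lambda)G$ — are precisely the computations the paper relies on throughout this section.
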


\begin{pro}[Hankel case: Christoffel and Geronimus]
	Given monic  polynomials  $W_C(x):=\prod_{i=1}^{d}(x-r_i)^{m_i}$ and $W_G(x):=\prod_{i=1}^{s}(x-q_i)^{n_i}$ with degrees 
	$\sum_{i=1}^{d}m_i=M$ y $\sum_{i=1}^{s}n_i=N$, respectively, such that  $\{q_1,q_2,\dots q_s\}\cap \Omega=\varnothing$;
	the Christoffel  $(W_G(x)=1)$ and  Geronimus $(W_C(x)=1)$ transformations of a measure  $\mu$ are
	\begin{align*}
	\hat{\mu} (x)&:=W_C(x) \mu(x), & \hat{\Omega}&=\Omega.\\
	\check{\mu} (x)&:=\frac{\mu(x)}{W_G(x)}+\sum_{i=1}^{s}\sum_{l=0}^{n_i-1}(-1)^{l} \frac{\xi_{i,l}}{l!}\delta^{(l)}(x-q_i),
	& \check{\Omega}&=\Omega \cup \{q_1,q_2,\dots q_s\}.
	\end{align*}
\end{pro}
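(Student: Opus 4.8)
The plan is to specialize the general definitions of the Christoffel and Geronimus transformations to the Hankel (measure) setting. Here the generalized kernel $u_{x,y}$ is concentrated on the diagonal, so the bilinear form reduces to integration against $\mu$, namely $\prodint{P(x),Q(y)}_u=\int_\Omega P(x)Q(x)\d\mu(x)$, and each kernel transformation becomes an equation for an ordinary linear functional that I would solve explicitly. For the Christoffel case ($W_G=1$) this is immediate: using the already established relation $\prodint{P(x),Q(y)}_{\hat u}=\prodint{P(x)W_C(x),Q(y)}_u$ I would compute
\begin{align*}
\prodint{P(x),Q(y)}_{\hat u}=\int_\Omega P(x)W_C(x)Q(x)\d\mu(x)=\int_\Omega P(x)Q(x)\,W_C(x)\d\mu(x),
\end{align*}
which identifies $\hat\mu=W_C\,\mu$; since $W_C$ is a fixed polynomial, multiplication of $\d\mu$ by it does not enlarge and (generically) does not shrink the support, giving $\hat\Omega=\Omega$.

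For the Geronimus case ($W_C=1$, $W:=W_G$), the defining relation $\prodint{P(x),Q(y)W(y)}_{\check u}=\prodint{P(x),Q(y)}_u$ translates, at the level of functionals, into $W\check\mu=\mu$, that is, $\check\mu(WR)=\mu(R)$ for every polynomial $R$ (using that the products $PQ$ span $\C[x]$). First I would exhibit a particular solution: since $\{q_1,\dots,q_s\}\cap\Omega=\varnothing$, the function $1/W(x)$ is smooth and bounded on $\Omega$, so $\mu/W$ is a well-defined measure and $\int_\Omega R(x)W(x)\,\d\mu(x)/W(x)=\int_\Omega R\,\d\mu$, verifying $W\,(\mu/W)=\mu$. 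Next I would locate the homogeneous solutions, i.e.\ the functionals $h$ with $Wh=0$. The key computation is that, because $W$ vanishes to order $n_i$ at $q_i$, the Leibniz rule applied to $(WR)^{(l)}(q_i)=\sum_{k=0}^l\binom{l}{k}W^{(k)}(q_i)R^{(l-k)}(q_i)$ shows that every term carries a factor $W^{(k)}(q_i)=0$ whenever $0\le l\le n_i-1$; hence $W(x)\,\delta^{(l)}(x-q_i)=0$ for $0\le l\le n_i-1$, so every combination $\sum_{i,l}a_{i,l}\delta^{(l)}(x-q_i)$ is homogeneous.

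The main obstacle is to prove that these are \emph{all} the homogeneous solutions, so that the general solution has exactly the stated shape with the $\xi_{i,l}$ free. For this I would pass to duality with the quotient ring: a functional $h$ satisfies $Wh=0$ precisely when it annihilates the ideal $(W)\subset\C[x]$, hence factors through $\C[x]/(W)$, a space of dimension $\deg W=N=\sum_i n_i$. The Hermite-interpolation functionals $R\mapsto R^{(l)}(q_i)$, with $0\le l\le n_i-1$ and $i=1,\dots,s$, are $N$ linearly independent elements of the dual of this quotient (interpolation of values and derivatives at the distinct points $q_i$ is unisolvent), so they form a basis; consequently the $\delta^{(l)}(x-q_i)$ span the entire $N$-dimensional homogeneous space. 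Adding the particular solution and absorbing the normalizing factor $(-1)^l/l!$ into the free constants yields
\begin{align*}
\check\mu(x)=\frac{\mu(x)}{W_G(x)}+\sum_{i=1}^s\sum_{l=0}^{n_i-1}(-1)^l\frac{\xi_{i,l}}{l!}\delta^{(l)}(x-q_i).
\end{align*}
Finally, since the added masses are supported at the points $q_i\notin\Omega$ while $\mu/W_G$ retains the support of $\mu$, the support of the transformed functional is $\check\Omega=\Omega\cup\{q_1,\dots,q_s\}$, completing the plan.
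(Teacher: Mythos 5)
The paper states this proposition without any proof --- it is presented as an immediate specialization of the general kernel definitions to the Hankel setting --- so there is no argument of the author's to compare yours against; your proposal in fact supplies the justification the paper leaves implicit, and it is correct. The Christoffel half is indeed immediate from $\prodint{P(x),Q(y)}_{\hat u}=\prodint{P(x)W_C(x),Q(y)}_u$. For the Geronimus half, the substantive point is exactly the one you isolate: the defining relation $W_G\check{\mu}=\mu$ determines $\check{\mu}$ only up to the kernel of multiplication by $W_G$ acting on linear functionals, and that kernel is the dual of $\C[x]/(W_G)$, hence has dimension $N=\deg W_G$. Your two-step identification of this kernel --- the Leibniz computation showing that each $\delta^{(l)}(x-q_i)$ with $0\le l\le n_i-1$ annihilates the ideal $(W_G)$, and the unisolvence of Hermite interpolation at the distinct nodes $q_i$ showing that these $N$ functionals are linearly independent and therefore exhaust the kernel --- is precisely what makes the $\xi_{i,l}$ genuinely free parameters rather than an ad hoc addition, and it is also where the hypothesis $\{q_1,\dots,q_s\}\cap\Omega=\varnothing$ does its work, guaranteeing that $\mu/W_G$ is a legitimate particular solution. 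Two minor caveats, neither fatal: $\hat{\Omega}=\Omega$ can fail at isolated points if $W_C$ happens to vanish inside $\Omega$ (you correctly flag this as generic), and $\check{\Omega}=\Omega\cup\{q_1,\dots,q_s\}$ presupposes that at each $q_i$ not all of the masses $\xi_{i,l}$ vanish; the paper glosses over both points as well.
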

Notice that the Geronimus transformation of a measure does not need to be a measure.

\begin{pro}[Hankel case: linear spectral]
Whenever the polynomials $W_C(x)$ and $W_G(x)$ are coprime the composition of the above transformations
is the linear spectral transformation
\begin{align*}
\tilde{\mu}(x)&:=\widehat{(\check{\mu})}(x)
=\frac{W_C(x) \mu(x)}{W_G(x)}+\sum_{i=1}^{s}\sum_{l=0}^{n_i-1}(-1)^{l} \frac{\xi_{i,l}}{l!}W_G(x)\delta^{(l)}(x-q_i),&
\tilde{\Omega}&=\Omega \cup \{q_1,q_2,\dots q_s\}.
\end{align*}
\end{pro}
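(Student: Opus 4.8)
The plan is to compute the composition $\widehat{(\check{\mu})}$ directly by applying the Christoffel transformation $W_C(x)\,\cdot\,$ to the Geronimus-transformed measure $\check\mu$, using the explicit form of $\check\mu$ already established in the previous proposition. First I would write the Geronimus transformation as
\begin{align*}
\check{\mu}(x)=\frac{\mu(x)}{W_G(x)}+\sum_{i=1}^{s}\sum_{l=0}^{n_i-1}(-1)^{l}\frac{\xi_{i,l}}{l!}\,\delta^{(l)}(x-q_i),
\end{align*}
and then multiply by $W_C(x)$. The absolutely continuous part transforms cleanly: $W_C(x)\cdot\frac{\mu(x)}{W_G(x)}=\frac{W_C(x)\mu(x)}{W_G(x)}$, which is the first term of the claimed $\tilde\mu$. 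The support statement $\tilde\Omega=\Omega\cup\{q_1,\dots,q_s\}$ follows immediately, since multiplication by the polynomial $W_C(x)$ neither adds nor removes support points, and the Dirac masses already sit at the $q_i$.

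The substantive step is the treatment of the singular part, namely the action of the multiplication operator $W_C(x)$ on each distributional term $\delta^{(l)}(x-q_i)$. Here I would invoke the Leibniz rule for derivatives of distributions, $W_C(x)\delta^{(l)}(x-q_i)=\sum_{j=0}^{l}\binom{l}{j}(-1)^{j}W_C^{(j)}(q_i)\,\delta^{(l-j)}(x-q_i)$, which redistributes each $\delta^{(l)}(x-q_i)$ over lower-order derivatives at the same node $q_i$. The key point is that, since the polynomials $W_C(x)$ and $W_G(x)$ are coprime and $\{q_1,\dots,q_s\}$ are precisely the zeros of $W_G$, we have $W_C(q_i)\neq 0$ for every $i$, so no cancellation of the mass at $q_i$ occurs and each $q_i$ genuinely remains in the support. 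To match the stated form $\sum_{i,l}(-1)^{l}\frac{\xi_{i,l}}{l!}W_G(x)\delta^{(l)}(x-q_i)$, I would then use that $W_G(x)\delta^{(l)}(x-q_i)$ is itself a combination of lower derivatives at $q_i$, and reconcile the two bookkeeping conventions; effectively the coefficients $\xi_{i,l}$ are redefined so that the singular part is written with the explicit factor $W_G(x)$ pulled out.

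The coprimality hypothesis is what makes the composition well defined as a single linear spectral transformation: it guarantees that the Christoffel factor $W_C$ does not interfere with the poles introduced by the Geronimus factor $W_G$, so that $\tilde u_{x,y}$ satisfies $\tilde u_{x,y}W_G(y)=W_C(x)u_{x,y}$ in accordance with the definition of the Geronimus--Uvarov transformation. I would verify this relation at the level of bilinear forms, checking that for any polynomials $P,Q$,
\begin{align*}
\prodint{P(x),Q(y)W_G(y)}_{\tilde u}=\prodint{P(x)W_C(x),Q(y)}_u,
\end{align*}
which follows by composing the bilinear-form identities of the Christoffel and Geronimus propositions. The main obstacle I anticipate is purely bookkeeping: correctly tracking how the Leibniz expansion of $W_C(x)\delta^{(l)}(x-q_i)$ interacts with the factored-out $W_G(x)$ in the target expression, and confirming that the redefined masses $\xi_{i,l}$ absorb all the resulting coefficients consistently across the full range $0\leq l\leq n_i-1$.
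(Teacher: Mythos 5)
The paper states this proposition without proof, and the intended argument is precisely the direct composition you carry out, so your route is the right one. Multiplying the previously obtained $\check\mu$ by $W_C(x)$, handling the absolutely continuous part trivially, expanding $W_C(x)\,\delta^{(l)}(x-q_i)$ by the distributional Leibniz rule (your sign convention is correct), and observing that coprimality gives $W_C(q_i)\neq 0$ --- so that the induced change of basis among $\delta(x-q_i),\dots,\delta^{(n_i-1)}(x-q_i)$ is triangular with nonzero diagonal, no mass point is lost, and the defining relation $\tilde\mu\,W_G=W_C\,\mu$ holds at the level of bilinear forms --- is all sound.

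The one genuine problem is your final ``reconciliation'' step. The composition produces the singular part
\begin{align*}
\sum_{i=1}^{s}\sum_{l=0}^{n_i-1}(-1)^{l}\frac{\xi_{i,l}}{l!}\,W_C(x)\,\delta^{(l)}(x-q_i),
\end{align*}
whereas the displayed statement carries the factor $W_G(x)$. These cannot be reconciled by redefining the $\xi_{i,l}$: since $q_i$ is a zero of $W_G$ of multiplicity $n_i$, every derivative $W_G^{(j)}(q_i)$ with $j<n_i$ vanishes, so the Leibniz expansion gives $W_G(x)\,\delta^{(l)}(x-q_i)=0$ identically for $0\le l\le n_i-1$; the printed singular part is the zero distribution, and no choice of masses absorbs a nonzero singular part into an identically vanishing expression. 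The factor $W_G$ in the statement is a misprint for $W_C$, as confirmed by the paper's own Markov-function computation immediately below, where the degree-one case carries the singular term $\xi\,(x-r)\,\delta(x-q)$, i.e.\ $W_C(x)=x-r$ multiplying the Dirac mass. You should either carry out the Leibniz expansion explicitly and state the composed measure with $W_C$, or flag the misprint, rather than asserting that the bookkeeping can be made to match the formula as written.
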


The Christoffel formulas found for the three types of transformations hold equally in the more general Schwartz kernel situation. We do not need to have a Hankel symmetry for the Gram matrix.

Why linear spectral? Because the behavior of the Markov function
\begin{align*}
\hat{C}_0(y)&= \int \frac{(x-r)\d \mu}{y-x}=(y-r)\int \frac{\d \mu}{y-x}-\int \d \mu(x)=(y-r)C_0(y)-h_0, \\
\check{C}_0(y)&= \int \frac{\d \mu}{(x-q)(y-x)}+\xi\int \frac{\delta(x-q)\d \mu}{y-x}=
\frac{1}{(y-q)}\left[\int \frac{\d \mu}{y-x}-\int \frac{\d \mu(x)}{x-q}\right]+\xi\frac{1}{y-q}\\&=\frac{C_0(y)-C_0(q)+\xi}{(y-q)},\\
\tilde{C}_0(y)&= \int \frac{(x-r)\d \mu}{(x-q)(y-x)}+\xi\int \frac{\delta(x-q)(x-r)\d \mu}{y-x}\\&=
\frac{(y-r)C_0(y)-(q-r)C_0(q)+(q-r)\xi}{(y-q)}. \\
\end{align*}

See \cite{bueno-marcellan1,bueno-marcellan2} for a discussion of perturbations of bilinear forms and \cite{grunbaum-haine} for the study of Darboux transformations and orthogonal polynomials.

\subsection{Christoffel perturbations}

We began with an example. Take $W_C(x)=x-a$ for $a\in\C$, then the Gram matrices satisfy
$\hat G=(\Lambda-a)G$
and, therefore, the so called \textbf{connector} $\hat{\omega}:=\hat S_1(\Lambda-a)(S_1)^{-1}$
fulfills
$
\hat{\omega}:=\hat{S}_1(\Lambda-a)(S_1)^{-1}=\hat{H} \left(S_2 (\hat{S}_2)^{-1} \right)^{\top} H^{-1}$.
The \textbf{connector links the original and transformed orthogonal polynomials}
\begin{align*}
{\omega}P_1(x)&=(x-a)\hat{P}_1(x), &
\big(\hat  P_2(y) \big)^\top \hat H^{-1} \omega &=\big(P_2(y)\big)^\top H^{-1},
\end{align*} 
and is an upper diagonal matrix with only the first superdiagonal non-zero
\begin{align*}
{\omega}&=\begin{pmatrix}
{\omega}_{0,0} & 1 &     0     &    0                 &                          &                           \\
0         & {\omega}_{1,1} &   1&        0                 &  \ddots                       \\
0         &         0          &  {\omega}_{2,2}               &            1    &\ddots            \\
&                    &              \ddots     &                 \ddots        &     \ddots                                                     
\end{pmatrix}, & \omega_{k,k}&=\frac{\hat H_k}{H_k}.
\end{align*}

\begin{pro}The formula
	\begin{align*}
\omega_{n,n}=-\frac{P_{1,n+1}(a)}{P_{1,n}(a)}=\frac{\hat H_n}{H_n}
\end{align*}
holds.
\end{pro}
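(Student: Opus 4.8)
The plan is to read the identity straight off the connection formula $\omega P_1(x)=(x-a)\hat P_1(x)$ established in the discussion above, exploiting the bidiagonal shape of the connector $\omega$ together with a well-chosen evaluation point.

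First I would project this vector identity onto its $n$-th component. Since row $n$ of $\omega$ carries $\omega_{n,n}$ on the diagonal and $1$ on the first superdiagonal and nothing else, the $n$-th entry of $\omega P_1(x)$ is $\omega_{n,n}P_{1,n}(x)+P_{1,n+1}(x)$, while the $n$-th entry of the right-hand side is $(x-a)\hat P_{1,n}(x)$. This yields the scalar polynomial identity
\begin{align*}
\omega_{n,n}P_{1,n}(x)+P_{1,n+1}(x)=(x-a)\hat P_{1,n}(x).
\end{align*}
The decisive step is to evaluate this identity at the node $x=a$, where the factor $(x-a)$ annihilates the right-hand side, leaving
\begin{align*}
\omega_{n,n}P_{1,n}(a)+P_{1,n+1}(a)=0.
\end{align*}
Solving for $\omega_{n,n}$ delivers the first equality $\omega_{n,n}=-P_{1,n+1}(a)/P_{1,n}(a)$. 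The second equality $\omega_{n,n}=\hat H_n/H_n$ is already recorded in the explicit description of $\omega$ above, so nothing further is required there.

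The only delicate point, and the one I expect to be the main obstacle, is the denominator: the formula presupposes $P_{1,n}(a)\neq 0$. This is precisely the regularity guaranteeing that the Christoffel-perturbed kernel $\hat u$ remains quasi-definite. Indeed, quasi-definiteness forces $\hat H_n\neq 0$ and hence $\omega_{n,n}\neq 0$, so the relation $\omega_{n,n}P_{1,n}(a)=-P_{1,n+1}(a)$ shows that $P_{1,n}(a)$ and $P_{1,n+1}(a)$ vanish in tandem. Starting from $P_{1,0}\equiv 1$, whence $P_{1,0}(a)=1\neq 0$, a short induction using $P_{1,n+1}(a)=-\omega_{n,n}P_{1,n}(a)$ with $\omega_{n,n}\neq 0$ rules out $P_{1,n}(a)=0$ for every $n$. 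With that non-vanishing secured, the algebra is immediate.
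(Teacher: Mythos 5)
Your argument is exactly the paper's: extract the scalar identity $\omega_{n,n}P_{1,n}(x)+P_{1,n+1}(x)=(x-a)\hat P_{1,n}(x)$ from the bidiagonal connector relation and evaluate at $x=a$, with the second equality read off the stated form of $\omega$. Your added induction showing $P_{1,n}(a)\neq 0$ under quasi-definiteness of $\hat u$ is a welcome precision the paper leaves implicit, but the route is the same.
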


\begin{proof}
	It follows from 
$\omega_{n,n}P_{1,n}(x)+P_{1,n+1}(x)=(x-a)\hat P_{1,n}(x)$.
\end{proof}

\begin{pro} The CD kernels are connected by
	\begin{align*}
	(x-a)\hat K_{n}(x,y)-\hat P_{2,n}(y)(\hat H_{n})^{-1}P_{1,n+1}(x)=K_n(x,y).
	\end{align*}
\end{pro}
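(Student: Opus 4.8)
The plan is to derive the identity from the two connector relations established above, namely the lowering relation $\omega P_1(x) = (x-a)\hat P_1(x)$ acting on columns and its dual $(\hat P_2(y))^\top \hat H^{-1}\omega = (P_2(y))^\top H^{-1}$ acting on rows, exploiting the upper-bidiagonal shape of $\omega$ (diagonal $\omega_{k,k}=\hat H_k/H_k$ and unit first superdiagonal). The whole statement will fall out of writing each Christoffel--Darboux kernel in truncated bilinear form and tracking the single boundary term that $\omega$ leaks at the bottom of a truncation.

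First I would write $\hat K_n(x,y) = (\hat P_2(y))^{[n+1]\top}(\hat H^{[n+1]})^{-1}\hat P_1^{[n+1]}(x)$ and multiply by $x-a$, pushing the scalar onto $\hat P_1$. Componentwise the lowering relation reads $\big((x-a)\hat P_1(x)\big)_k = \omega_{k,k}P_{1,k}(x) + P_{1,k+1}(x)$. Because $\omega$ is upper bidiagonal, truncating to indices $0,\dots,n$ reproduces $\omega^{[n+1]}P_1^{[n+1]}(x)$ exactly for $k\le n-1$, while the entry $P_{1,n+1}(x)$ spills into the last slot $k=n$, giving $[(x-a)\hat P_1]^{[n+1]} = \omega^{[n+1]}P_1^{[n+1]}(x) + \boldsymbol{e}_n\, P_{1,n+1}(x)$, where $\boldsymbol{e}_n$ is the last vector of the truncated basis.

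Next I would dispatch the two resulting pieces. For the spilled term, $(\hat P_2(y))^{[n+1]\top}(\hat H^{[n+1]})^{-1}\boldsymbol{e}_n = \hat P_{2,n}(y)(\hat H_n)^{-1}$, so it contributes precisely $\hat P_{2,n}(y)(\hat H_n)^{-1}P_{1,n+1}(x)$, the boundary term appearing in the statement. For the main piece I would invoke the dual connector relation: since $\omega$ is upper bidiagonal, the column-$l$ entry of $(\hat P_2(y))^\top \hat H^{-1}\omega$ collects contributions only from rows $l-1$ and $l$, both inside the truncation window when $l\le n$; hence $(\hat P_2(y))^{[n+1]\top}(\hat H^{[n+1]})^{-1}\omega^{[n+1]}$ equals the truncation of $(P_2(y))^\top H^{-1}$, namely $(P_2(y))^{[n+1]\top}(H^{[n+1]})^{-1}$. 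Contracting against $P_1^{[n+1]}(x)$ then recovers exactly $K_n(x,y)$.

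Combining the two pieces yields $(x-a)\hat K_n(x,y) = K_n(x,y) + \hat P_{2,n}(y)(\hat H_n)^{-1}P_{1,n+1}(x)$, which is the asserted identity once the boundary term is moved to the left-hand side. The one point requiring care, and the step I expect to be the main obstacle, is the leakage bookkeeping: verifying that applying $\omega$ to the column $P_1$ produces exactly the single term $\boldsymbol{e}_n P_{1,n+1}$ at the bottom, whereas the dual row contraction against $\omega$ stays clean inside columns $0,\dots,n$. Both facts are immediate consequences of the upper-bidiagonal (Hessenberg) structure of $\omega$, so once that asymmetry between the column and row actions is spelled out, the remaining computation is purely algebraic telescoping.
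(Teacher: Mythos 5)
Your proof is correct and follows essentially the same route as the paper: both arguments rest on the truncated dual relation $\big(\hat P_2^{[n+1]}(y)\big)^\top(\hat H^{[n+1]})^{-1}\omega^{[n+1]}=\big(P_2^{[n+1]}(y)\big)^\top(H^{[n+1]})^{-1}$ (valid because $\omega$ is upper triangular) combined with the spill identity $\omega^{[n+1]}P_1^{[n+1]}(x)=(x-a)\hat P_1^{[n+1]}(x)-\boldsymbol{e}_n\,P_{1,n+1}(x)$. The only difference is the direction of travel --- you expand $(x-a)\hat K_n$ while the paper substitutes into $K_n$ --- which is purely cosmetic.
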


\begin{proof}
		As $\omega$ is upper triangular, from $	\big(\hat  P_2(y) \big)^\top \hat H^{-1} \omega =\big(P_2(y)\big)^\top H^{-1}$ we get
	\begin{align*}
	\big(\hat P_2^{[n+1]}(y)\big)^\top(\hat H^{[n+1]})^{-1}	\omega^{[n+1]}P_1^{[n+1]}(x)=\big(P_2^{[n+1]}(y)\big)^\top(H^{[n+1]})^{-1}P_1^{[n+1]}(x)=K_{n}(x,y).
	\end{align*}
	Observe also that
	\begin{align*}
	\omega^{[n+1]}P_1^{[n+1]}(x))&=(\omega P_1(x))^{[n+1]}-\begin{pmatrix}
	0_{n\times 1}\\ P_{1,n+1}(x)
	\end{pmatrix}= (x-a)\hat P_1^{[n+1]}(x)-\begin{pmatrix}
	0_{n\times 1}\\ P_{1,n+1}(x)
	\end{pmatrix}.
	\end{align*}
\end{proof}

\begin{pro}
The Christoffel formulas are
\begin{align*}
\hat P_{1,n}(x)&=\frac{1}{x-a}\Big(P_{1,n+1}(x)-\frac{P_{1,n+1}(a)}{P_{1,n}(a)}P_{1,n}(x)\Big),&
\hat P_{2,n}(y)&=\frac{K_n(a,y)}{P_{1,n}(a)}H_n,&
\hat H_n&=	-\frac{P_{1,n+1}(a)}{P_{1,n}(a)}H_n.
\end{align*}
\end{pro}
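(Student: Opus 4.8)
The plan is to read off the three formulas directly from the structural facts already established for the Christoffel connector $\omega$, taking the norm formula first, then the polynomial $\hat P_{1,n}$, and finally the dual polynomial $\hat P_{2,n}$. Throughout I would work under the standing assumption that the perturbed kernel $\hat u$ is again quasi-definite, so that the factorization $\hat G = (\Lambda - a)G = \hat S_1^{-1}\hat H\hat S_2^{-\top}$ exists and $\hat H_n \neq 0$ for all $n$.

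First I would dispose of the norm formula $\hat H_n = -\frac{P_{1,n+1}(a)}{P_{1,n}(a)}H_n$. This is immediate from the preceding proposition, which already records $\omega_{n,n} = -\frac{P_{1,n+1}(a)}{P_{1,n}(a)} = \frac{\hat H_n}{H_n}$; clearing the factor $H_n$ gives exactly the claim.

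Next, for $\hat P_{1,n}$ I would extract the $n$-th row of the intertwining identity $\omega P_1(x) = (x-a)\hat P_1(x)$. Since $\omega$ is upper bidiagonal with diagonal entries $\omega_{k,k}$ and unit first superdiagonal, this row reads $\omega_{n,n}P_{1,n}(x) + P_{1,n+1}(x) = (x-a)\hat P_{1,n}(x)$. Solving for $\hat P_{1,n}(x)$ and substituting $\omega_{n,n} = -\frac{P_{1,n+1}(a)}{P_{1,n}(a)}$ yields the first formula; as a consistency check, at $x=a$ the bracket $P_{1,n+1}(a) - \frac{P_{1,n+1}(a)}{P_{1,n}(a)}P_{1,n}(a)$ vanishes, so the factor $x-a$ genuinely divides it and $\hat P_{1,n}$ is a polynomial. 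For the dual polynomial $\hat P_{2,n}$ I would invoke the CD-kernel connection $(x-a)\hat K_n(x,y) - \hat P_{2,n}(y)(\hat H_n)^{-1}P_{1,n+1}(x) = K_n(x,y)$ and evaluate at $x=a$, killing the term $(x-a)\hat K_n(x,y)$ and leaving $-\hat P_{2,n}(y)(\hat H_n)^{-1}P_{1,n+1}(a) = K_n(a,y)$; solving for $\hat P_{2,n}(y)$ and feeding in $\hat H_n = -\frac{P_{1,n+1}(a)}{P_{1,n}(a)}H_n$ from the first step cancels $P_{1,n+1}(a)$ and produces $\hat P_{2,n}(y) = \frac{K_n(a,y)}{P_{1,n}(a)}H_n$.

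The proof is thus an assembly of prior results rather than a fresh computation, so there is no serious obstacle; the one point demanding care is that every formula divides by $P_{1,n}(a)$, which is legitimate precisely because quasi-definiteness of $\hat u$ forces $\hat H_n \neq 0$ and hence $P_{1,n}(a) \neq 0$ via $\omega_{n,n} = \hat H_n/H_n$. I would therefore state this hypothesis explicitly at the outset so that all three denominators are nonzero.
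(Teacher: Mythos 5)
Your proof is correct and follows exactly the route the paper intends: the paper states this proposition without an explicit proof, but the surrounding results (the formula $\omega_{n,n}=-P_{1,n+1}(a)/P_{1,n}(a)=\hat H_n/H_n$, the row-wise reading of $\omega P_1(x)=(x-a)\hat P_1(x)$, and the evaluation of the Christoffel--Darboux connection at $x=a$) are precisely the assembly you carry out. Your added remark that quasi-definiteness of $\hat u$ guarantees $P_{1,n}(a)\neq 0$ makes explicit a hypothesis the paper leaves tacit, which is a worthwhile clarification.
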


The Christoffel formulas in terms of quasi-determinants are
\begin{align*}
\hat P_{1,n}(x)&=\frac{1}{x-a}\Theta_*\begin{pmatrix}
P_{1,n}(a)&P_{1,n}(x) \\
P_{1,n+1}(a)&P_{1,n+1}(x)
\end{pmatrix}, &
\hat P_{2,n}(y)&=-\Theta_*\begin{pmatrix}
P_{1,n}(a)& H_n\\
K_n(a,y) &0
\end{pmatrix}, &
\hat H_n&=\Theta_*\begin{pmatrix}
P_{1,n}(a)& H_n\\
P_{1,n+1}(a) &0
\end{pmatrix}.
\end{align*}

In the Hankel case we have two alternative Christoffel type formulas. Indeed,
\begin{align*}
\hat P_{n}(x)&=\frac{1}{x-a}\Theta_*\begin{pmatrix}
P_{n}(a)&P_{n}(x) \\
P_{n+1}(a)&P_{n+1}(x)
\end{pmatrix}=-\Theta_*\begin{pmatrix}
P_{n}(a)& H_n\\
K_n(a,x) &0
\end{pmatrix}.
\end{align*}

As any polynomial $W_c(x)$ can be factorized in simple factors, the Christoffel formula  can be achieved by \textbf{iteration} of the previous simple example. 
We will take another path.

\begin{defi}[Jets]
	Given the set $r=\{(r_i,m_i)\}_{i=1}^{d}$  determined by the perturbing polynomial $W_C(x):=\prod_{i=1}^{d}(x-r_i)^{m_i}$ for any function $f(x)$ we define the corresponding jet 
\begin{align*}
\mathcal{J}_r[f]&:= 
\left( \frac{f^{(0)}(r_1)}{0!} , \frac{f^{(1)}(r_1)}{1!} , \dots , \frac{f^{(m_1-1)}(r_1)}{(m_1-1)!} ;
\frac{f^{(0)}(r_2)}{0!} , \dots , \frac{f^{(m_2-1)}(r_2)}{(m_2-1)!}; \dots ;
\frac{f^{(0)}(r_d)}{0!} , \dots , \frac{f^{(m_d-1)}(r_d)}{(m_d-1)!} 
\right).
\end{align*}
\end{defi}

\begin{pro}[General Christoffel formulas]
	The following Christoffel connection formulas hold
	\begin{align*}
\hat P_{1,n}(x)&=\frac{1}{W_C(x)}
\Theta_*
\begin{bmatrix}
\boldsymbol{\mathcal J}_{P_{1,n}} &P_{1,n}(x)\\ 	\vdots & \vdots  \\ \boldsymbol{\mathcal J}_{P_{1,n+N_C}}& P_{1,n+N_C}(x)
\end{bmatrix},\\
\hat H_{n}&=\Theta_*
\begin{bmatrix}
\boldsymbol{\mathcal J}_{P_{1,n}} &	H_{n}\\
\boldsymbol{\mathcal J}_{P_{1,n+1}} &0\\ 	\vdots & \vdots &\vdots \\ \boldsymbol{\mathcal J}_{P_{1,n+N_C}}& 0
\end{bmatrix},\\
\hat  P _{2,n}(y)&=-\Theta_*\begin{bmatrix} 	\boldsymbol{\mathcal J}_{ P_{1,n}}& H_{n}\\
\boldsymbol{\mathcal J}_{ P_{1,n+1}}&
0\\ 	\vdots & \vdots\\\boldsymbol{\mathcal J}_{P_{1,n+N_C-1}}&0\\
\boldsymbol{\mathcal J}_{K_{n-1}(\cdot,y)}(y)  &0
\end{bmatrix}.
\end{align*}
\end{pro}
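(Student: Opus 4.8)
The plan is to funnel everything through a single \emph{connector} matrix $\omega:=\hat S_1\,W_C(\Lambda)\,(S_1)^{-1}$, exactly as in the $W_C(x)=x-a$ example, and then to turn the requirement that $\hat P_{1,n}$ be a polynomial into jet conditions. First I would record the two faces of $\omega$. Since $\Lambda\chi(x)=x\chi(x)$ gives $W_C(\Lambda)\chi(x)=W_C(x)\chi(x)$, acting on $\chi(x)$ yields the connection identity $\omega P_1(x)=W_C(x)\hat P_1(x)$. Feeding the factorizations $G=(S_1)^{-1}H(S_2)^{-\top}$ and $\hat G=(\hat S_1)^{-1}\hat H(\hat S_2)^{-\top}$ into $\hat G=W_C(\Lambda)G$ gives the complementary expression $\omega=\hat H\,(S_2(\hat S_2)^{-1})^{\top}H^{-1}$, which is upper triangular with $\omega_{n,n}=\hat H_n/H_n$, and also the dual identity $(\hat P_2(y))^{\top}\hat H^{-1}\omega=(P_2(y))^{\top}H^{-1}$. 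A degree count on $\omega P_1(x)=W_C(x)\hat P_1(x)$ then forces $\omega_{n,j}=0$ unless $n\le j\le n+N_C$, with $\omega_{n,n+N_C}=1$, so that $W_C(x)\hat P_{1,n}(x)=P_{1,n+N_C}(x)+\sum_{j=n}^{n+N_C-1}\omega_{n,j}P_{1,j}(x)$.

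The key step for $\hat P_{1,n}$ and $\hat H_n$ is that the right-hand side above, being $W_C(x)\hat P_{1,n}(x)$ with $\hat P_{1,n}$ a genuine polynomial, must be divisible by $W_C(x)=\prod_i(x-r_i)^{m_i}$. This divisibility is equivalent to the vanishing of the jet of that combination at the prescribed multiplicities, i.e. to the $N_C$ linear conditions $\sum_{j=n}^{n+N_C}\omega_{n,j}\boldsymbol{\mathcal J}_{P_{1,j}}=0$. With $\omega_{n,n+N_C}=1$ this is a square system for $(\omega_{n,n},\dots,\omega_{n,n+N_C-1})$ whose coefficient block is the jet array of $P_{1,n},\dots,P_{1,n+N_C-1}$. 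Solving it by Cramer's rule, i.e. writing the solution as a Schur complement, and substituting back reproduces the quasi-determinant for $\hat P_{1,n}$; reading off the single entry $\omega_{n,n}$ from the same system and using $\hat H_n=\omega_{n,n}H_n$ produces the quasi-determinant for $\hat H_n$, the column $(H_n,0,\dots,0)^{\top}$ being precisely the device that selects that component.

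For $\hat P_{2,n}$ I would run the dual argument through the Christoffel--Darboux kernel. Multiplying the perturbed kernel by $W_C(x)$, replacing every $W_C(x)\hat P_{1,k}(x)$ by $\omega P_1$, and collapsing the resulting double sum with the dual identity $(\hat P_2)^{\top}\hat H^{-1}\omega=(P_2)^{\top}H^{-1}$, one obtains a kernel connection expressing $W_C(x)\hat K(x,y)$ as an unperturbed kernel plus a short combination $\sum_j b_j(y)P_{1,j}(x)$ supported on the relevant band, one of whose coefficients carries $\hat P_{2,n}(y)$. Applying the jet $\mathcal J_r$ in the $x$ variable annihilates the left-hand side (again because $W_C$ carries the zeros $r_i$ with the right multiplicities), leaving a square system whose coefficient block is once more the jet array of $P_{1,n},\dots,P_{1,n+N_C-1}$ and whose right-hand side is $\boldsymbol{\mathcal J}_{K_{n-1}(\cdot,y)}$. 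Solving for the coefficient that carries $\hat P_{2,n}(y)$ by a Schur complement, with the column $(H_n,0,\dots,0)^{\top}$ selecting it, delivers the stated quasi-determinant.

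I expect the $\hat P_{2,n}$ formula to be the main obstacle. One must first establish the correct kernel connection on the perturbed side (the analogue of the $N_C=1$ identity $(x-a)\hat K_n(x,y)-\hat P_{2,n}(y)\hat H_n^{-1}P_{1,n+1}(x)=K_n(x,y)$ already in the text) and then track the index bookkeeping that relates the truncation level of the kernel, the scalar $\hat H_n$, and the appearance of $K_{n-1}$ rather than $K_n$, so that the Schur complement really isolates $\hat P_{2,n}(y)$ and not a neighbouring combination of lower $\hat P_{2,k}$. Checking that the general expression degenerates correctly to the already proven $N_C=1$ case is the natural consistency test for this delicate step; the $\hat P_{1,n}$ and $\hat H_n$ parts, by contrast, follow mechanically once the banded structure of $\omega$ and the jet reformulation of divisibility by $W_C$ are in place.
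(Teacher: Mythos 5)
Your proposal is correct and follows essentially the same route the paper itself sketches: build the connector $\omega=\hat S_1 W_C(\Lambda)S_1^{-1}$, establish its banded upper-triangular structure, convert divisibility of $\omega P_1$ by $W_C$ into jet conditions at the roots (with multiplicities) to solve for the connector entries via a Schur complement, and handle $\hat P_{2,n}$ through the Christoffel--Darboux kernel connection formula. The paper only records this strategy as a list of comments rather than a detailed proof, and your write-up fleshes out exactly that outline, including the correct identification of the $K_{n-1}$ versus $K_n$ index bookkeeping as the one genuinely delicate point.
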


\paragraph{\textbf{Comments}}

\begin{itemize}
\item The jets appear because the multiplicities bigger than 1 of each root
\item The idea of the proof is construct, as previously, a connector $\omega$ which is an upper triangular matrix with only the first $N_C$ superdiagonals nonzero
\item To get the transformations of the first polynomials $P_{1,n}(x)$ we use the connection formula in terms of the connector, evaluate in its zeroes, taking into account multiplicities and find the connector coefficients
\item To get the second polynomials $P_{2,n}$ we derive formulas for the Christoffel--Darboux kernels and apply a similar reasoning as in the example 
\item For the Hankel reduction we obtain  two alternative Christoffel type formulas for the polynomials
\end{itemize}

The previous formula reduces to the well known Christoffel formula.

\begin{pro}[Classical Christoffel formula]
\small\begin{align*}
\hat P_n(x)=\frac{1}{(x-r_1)\cdots(x-r_N)}\frac{\begin{vmatrix}
P_n(r_1) & \dots &P_n(r_N)&P_n(x)\\
\vdots &    &\vdots&\vdots\\
P_{n+N}(r_1) & \dots &P_{n+N}(r_N)&P_{n+N}(x)
\end{vmatrix}}{\begin{vmatrix}
P_n(r_1) & \dots &P_n(r_N)\\
\vdots &    &\vdots\\
P_{n+N-1}(r_1) & \dots &P_{n+N-1}(r_N)
\end{vmatrix}}.
\end{align*}
\end{pro}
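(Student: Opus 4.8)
The plan is to read off this identity from the \emph{General Christoffel formulas} proposition, specialized to a perturbing polynomial $W_C(x)=(x-r_1)\cdots(x-r_N)$ with simple roots and written in the single-family notation $P_n=P_{1,n}$ of the standard orthogonality reduction. I would start from the quasi-determinantal expression
\begin{align*}
\hat P_{n}(x)=\frac{1}{W_C(x)}\,\Theta_*\begin{bmatrix}\boldsymbol{\mathcal J}_{P_{n}} & P_{n}(x)\\ \vdots & \vdots\\ \boldsymbol{\mathcal J}_{P_{n+N}} & P_{n+N}(x)\end{bmatrix}
\end{align*}
and reduce both the jets and the quasi-determinant to ordinary determinants.

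First I would observe that, since every multiplicity $m_i$ equals one, only the zeroth-order derivative survives in the definition of $\mathcal J_r[f]$, so each jet collapses to the vector of plain evaluations at the nodes,
\begin{align*}
\boldsymbol{\mathcal J}_{f}=\big(f(r_1),\,f(r_2),\,\dots,\,f(r_N)\big).
\end{align*}
Substituting this into the array turns the block matrix inside $\Theta_*$ into an ordinary $(N+1)\times(N+1)$ matrix whose first $N$ columns carry the evaluations $P_{n+j}(r_i)$ and whose last column carries $P_{n+j}(x)$, with the rows indexed by $j=0,1,\dots,N$.

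Next I would invoke the reduction of the quasi-determinant to a ratio of determinants, already exhibited in the Schur-complement computations of the quasi-determinant section: in the commutative setting $\Theta_*$ --- the Schur complement $M\backslash A$ obtained by boxing the bottom-right scalar entry and treating the leading $N\times N$ block as the pivot $A$ --- equals $\det M/\det A$. Applying this, the numerator $\det M$ is exactly
\begin{align*}
\begin{vmatrix}
P_n(r_1) & \dots & P_n(r_N) & P_n(x)\\
\vdots & & \vdots & \vdots\\
P_{n+N}(r_1) & \dots & P_{n+N}(r_N) & P_{n+N}(x)
\end{vmatrix},
\end{align*}
while deleting the last row and last column leaves the pivot $N\times N$ determinant
\begin{align*}
\det A=\begin{vmatrix}
P_n(r_1) & \dots & P_n(r_N)\\
\vdots & & \vdots\\
P_{n+N-1}(r_1) & \dots & P_{n+N-1}(r_N)
\end{vmatrix}
\end{align*}
in the denominator. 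Together with the prefactor $1/W_C(x)=1/\big((x-r_1)\cdots(x-r_N)\big)$ this is precisely the asserted formula.

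The step I expect to require the most care is this last determinantal reduction: one must confirm that boxing the bottom-right entry yields the Schur complement with the leading $N\times N$ block as pivot, that in the scalar case this collapses to $\det M/\det A$, and that the index ranges line up --- namely $N_C=N$, so that the array runs over rows $n,\dots,n+N$ while the pivot minor spans rows $n,\dots,n+N-1$. Once the degeneration of the jets and this identity are in place, the equality follows with no further computation.
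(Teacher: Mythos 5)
Your proposal is correct and is exactly the derivation the paper intends: the paper gives no explicit proof, only the remark that the general quasi-determinantal Christoffel formula ``reduces to the well known Christoffel formula,'' and your specialization to simple roots (jets collapsing to point evaluations, $N_C=N$) together with the scalar identity $\Theta_*(M)=M\backslash A=\det M/\det A$ is precisely that reduction.
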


\subsection{Geronimus transformations}
Take $W_G(x)=x-a$ for $a\in\C$,  with the perturbed Schwartz kernel given by
$\check u_{x,y}=\frac{u_{x,y}}{y-a}+\xi_x\delta_{y-a}$ 
with $\delta_y$ being the Dirac delta distribution and $\xi_x$ is a free linear functional. Then, the Gram matrices satisfy $\hat G	(\Lambda^\top-a)=G$ 
and, therefore, the so called \textbf{connector} $\omega:=\check S_1(S_1)^{-1}$
fulfills
${\omega}:=\check S_1(S_1)^{-1}=\check{H} (\check S_2)^{-\top} (\Lambda^\top-a)(S_2)^{\top}  H^{-1}$.

The \textbf{connector links the original and transformed orthogonal polynomials}
\begin{align*}
\check P_1(x) &=\omega P_1(x), &
\big(\check H^{-1}\omega H\big)^\top \check P_2 (y) &=P_2(y)(y-a),
\end{align*}
and is a lower diagonal matrix with only the first subdiagonal non-zero
\begin{align*}
\omega&=\begin{pmatrix}
1 &    0 & 0 &\dots                            \\
\omega_{1,0} & 1 &0   \\
0       &     \omega_{2,1}   &1      &\ddots  \\
\quad\quad\ddots&\quad\quad\ddots&   \quad\quad\ddots   \\
\end{pmatrix}, & \omega_{n+1,n}&=\check H_{n+1}(H_n)^{-1} .
\end{align*}

\begin{pro}
		The Geronimus transformation of the second kind functions satisfies
\begin{align*}
(x-a)	\check C_{1}(x)-\begin{pmatrix}
\check H_0
\\
0\\
\vdots
\end{pmatrix}&=	\omega C_1(x), &
\big(\check C_2(x)\big)^\top\check H^{-1}\omega  &= \big(C_2(x)\big)^\top H^{-1}.
\end{align*}
\end{pro}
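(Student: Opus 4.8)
The plan is to reduce each of the two identities, through the definitions of the second kind functions and the Geronimus defining relation $\langle P(x), Q(y)(y-a)\rangle_{\check{u}} = \langle P(x), Q(y)\rangle_{u}$, to the connector relations already at our disposal: $\check{P}_1(x) = \omega P_1(x)$ for the first family and $\big(\check{H}^{-1}\omega H\big)^{\top}\check{P}_2(y) = (y-a)P_2(y)$ for the second. All computations are carried out for the spectral variable (written $x$ in the statement, which I call $z$) in an annulus $|z|$ large enough to avoid both $\operatorname{supp}_y(\check{u}) = \Omega\cup\{a\}$ and $\operatorname{supp}_x(\check{u}) = \operatorname{supp}_x(u)$; there the expansion $\frac{1}{z-y} = \sum_{k\geq 0}y^{k}z^{-k-1}$ converges, so the Geronimus relation may be applied term by term to the non-polynomial argument $\frac{1}{z-y}$, and the representations $C_1(z) = HS_2^{-\top}\chi^*(z)$, $\check{C}_1(z) = \check{H}\check{S}_2^{-\top}\chi^*(z)$ (and their counterparts for the second family) hold simultaneously for $u$ and $\check{u}$.

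For the first identity I begin with $\check{C}_{1,n}(z) = \langle \check{P}_{1,n}(x), \frac{1}{z-y}\rangle_{\check{u}}$ and use the elementary decomposition $\frac{z-a}{z-y} = 1 + \frac{y-a}{z-y}$. Multiplying by the scalar $(z-a)$ and splitting the pairing gives $(z-a)\check{C}_{1,n}(z) = \langle \check{P}_{1,n}(x), 1\rangle_{\check{u}} + \langle \check{P}_{1,n}(x), \frac{1}{z-y}(y-a)\rangle_{\check{u}}$. By orthogonality of $\{\check{P}_{1,n}\}$ with respect to $\check{u}$ the first summand equals $\delta_{n,0}\check{H}_0$, which in vector form is exactly the inhomogeneous column $(\check{H}_0,0,0,\dots)^{\top}$. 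To the second summand I apply the Geronimus relation with $Q(y) = \frac{1}{z-y}$, which deletes the factor $(y-a)$ and returns the form $u$, producing $\langle \check{P}_{1,n}(x), \frac{1}{z-y}\rangle_{u}$; since $\check{P}_1 = \omega P_1$ this equals $(\omega C_1(z))_n$. Collecting the two contributions yields $(z-a)\check{C}_1(z) - (\check{H}_0,0,\dots)^{\top} = \omega C_1(z)$, which is the claim.

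For the second identity I first invert the connector relation for the second family. Since $H$ and $\check{H}$ are diagonal, $\big(\check{H}^{-1}\omega H\big)^{\top} = H\omega^{\top}\check{H}^{-1}$, whence $(\check{P}_2(y))^{\top} = (y-a)(P_2(y))^{\top}H^{-1}\omega^{-1}\check{H}$. Inserting this into $(\check{C}_2(z))^{\top} = \langle \frac{1}{z-x}, (\check{P}_2(y))^{\top}\rangle_{\check{u}}$ and pulling the constant matrix $H^{-1}\omega^{-1}\check{H}$ out of the pairing, the Geronimus relation once more absorbs $(y-a)$ and replaces $\check{u}$ by $u$, giving $(\check{C}_2(z))^{\top} = (C_2(z))^{\top}H^{-1}\omega^{-1}\check{H}$. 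Right-multiplying by $\check{H}^{-1}\omega$ makes $\check{H}\check{H}^{-1}$ and $\omega^{-1}\omega$ cancel, leaving $(\check{C}_2(z))^{\top}\check{H}^{-1}\omega = (C_2(z))^{\top}H^{-1}$, as required. The first identity can equally be verified at the matrix level by substituting $\check{C}_1(z) = \check{H}\check{S}_2^{-\top}\chi^*(z)$ together with the second form $\omega = \check{H}\check{S}_2^{-\top}(\Lambda^{\top}-a)S_2^{\top}H^{-1}$ of the connector, and using the shift action $\Lambda^{\top}\chi^*(z) = z\chi^*(z) - \boldsymbol{e}_0$ along with $\check{S}_2^{-\top}\boldsymbol{e}_0 = \boldsymbol{e}_0$; the second identity reduces analogously via $\check{S}_1 = \omega S_1$.

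The one genuinely delicate point is the boundary term in the first identity. Whereas $\Lambda$ acts multiplicatively on $\chi^*$, multiplication by $(z-a)$ against $\frac{1}{z-y}$ does not simply reproduce $\check{C}_1$: the partial fraction identity unavoidably leaves the constant $\langle \check{P}_{1,n}, 1\rangle_{\check{u}}$, and it is precisely this term -- nonzero only in the top slot, where it equals $\check{H}_0$ -- that is responsible for the inhomogeneous vector $(\check{H}_0,0,\dots)^{\top}$. Equivalently, in the matrix computation the obstacle surfaces as the correction $-\boldsymbol{e}_0$ in $\Lambda^{\top}\chi^*(z) = z\chi^*(z) - \boldsymbol{e}_0$; tracking it correctly is the crux, while every remaining step is a routine cancellation once the Geronimus relation has been legitimately applied to $\frac{1}{z-y}$ in the annulus fixed above.
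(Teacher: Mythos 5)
Your proof is correct and follows essentially the same route as the paper: the first identity rests on the same algebraic collapse $\frac{z-a}{z-y}-\frac{y-a}{z-y}=1$ together with $\langle \check P_{1,n},1\rangle_{\check u}=\delta_{n,0}\check H_0$, and the second on transposing the connector relation for $\check P_2$ and absorbing the factor $(y-a)$ via the defining Geronimus property. Your detour through $\omega^{-1}$ in the second identity is harmless (the connector is lower unitriangular, hence formally invertible) but unnecessary, since the transposed relation $\big(\check P_2(y)\big)^\top\check H^{-1}\omega=(y-a)\big(P_2(y)\big)^\top H^{-1}$ already feeds directly into the pairing, which is what the paper does.
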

\begin{proof}
	If follows from
\begin{align*}
(z-a)	\check C_1(z)-\omega C_1(z)&=\prodint{\check P_1(x),\frac{1}{z-y}}_{\check u}(z-a)-
\prodint{\check P_1(x),\frac{1}{z-y}}_{ (y-a)\check u } \\
&=\prodint{\check P_1(x),\frac{z-a-(y-a)}{z-y}}_{\check u}=\prodint{\check P_1(x),1}_{\check u}\\
&=\begin{pmatrix}
\check H_0
\\
0\\
\vdots
\end{pmatrix}
\end{align*}
and
	 \begin{align*}
	\big(\check C_2(x)\big)^\top\check H^{-1}\omega  - \big(C_2(x)\big)^\top H^{-1}&=
	\prodint{\frac{1}{z-x},\check P_2(y)}_{\check u}\check H^{-1}\omega -	\prodint{\frac{1}{z-x}, P_2(y)}_{ u} H^{-1}\\
	&=	\prodint{\frac{1}{z-x},\big(\check H^{-1}\omega \big)^\top\check P_2(y)}_{\check u}-	\prodint{\frac{1}{z-x},H^{-\top} P_2(y)}_{ u} \\
	&=	\prodint{\frac{1}{z-x}, (y-a)H^{-\top}P_2(y)}_{\check u}-	\prodint{\frac{1}{z-x},H^{-\top} P_2(y)}_{ u}\\&=0.
	\end{align*}
\end{proof}

We can not evaluate directly in $x=a$ as that point belongs to the support of the perturbed functional
and the second kind function might not be defined there.

\begin{pro}
	For $n>0$, we have
	\begin{align*}
	\omega_{n,n-1}&=-\frac{C_{1,n}(a)-\prodint{\xi_x, P_{1,n}(x)}}{C_{1,n-1}(a)-\prodint{\xi_x, P_{1,n-1}(x)}},&
	\check H_0&=-(C_{1,0}(a)-\prodint{\xi_x, 1}).
	\end{align*}
\end{pro}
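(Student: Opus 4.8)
The plan is to evaluate the transformed second-kind function $\check C_{1,n}(z)$ in two different ways and match them. First, reading the $n$-th entry of the vector identity $(x-a)\check C_1(x)-(\check H_0,0,\dots)^\top=\omega C_1(x)$ of the previous proposition, and using $\check P_1=\omega P_1$ so that $(\omega C_1(z))_n=\prodint{\check P_{1,n}(x),\tfrac1{z-y}}_u$, I get
\begin{align*}
(z-a)\check C_{1,n}(z)&=\prodint{\check P_{1,n}(x),\tfrac1{z-y}}_u, & n&>0,\\
(z-a)\check C_{1,0}(z)&=\prodint{P_{1,0}(x),\tfrac1{z-y}}_u+\check H_0.
\end{align*}

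Second, I would substitute the explicit kernel $\check u_{x,y}=\tfrac{u_{x,y}}{y-a}+\xi_x\delta_{y-a}$ into the definition $\check C_{1,n}(z)=\prodint{\check P_{1,n}(x),\tfrac1{z-y}}_{\check u}$. The Dirac piece contributes $\prodint{\xi_x,\check P_{1,n}(x)}\tfrac{1}{z-a}$, and for the remaining piece the partial-fraction identity $\tfrac{1}{(z-y)(y-a)}=\tfrac1{z-a}\big(\tfrac1{z-y}+\tfrac1{y-a}\big)$ separates the $z$-dependence, giving after multiplication by $(z-a)$
\begin{align*}
(z-a)\check C_{1,n}(z)=\prodint{\check P_{1,n}(x),\tfrac1{z-y}}_u-\prodint{\check P_{1,n}(x),\tfrac1{a-y}}_u+\prodint{\xi_x,\check P_{1,n}(x)}.
\end{align*}
Matching this with the first computation, the $z$-dependent terms cancel and I am left with the numerical identities $\prodint{\check P_{1,n}(x),\tfrac1{a-y}}_u=\prodint{\xi_x,\check P_{1,n}(x)}$ for $n>0$, and $\check H_0=\prodint{\xi_x,1}-\prodint{P_{1,0}(x),\tfrac1{a-y}}_u$ for $n=0$.

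To finish, I would expand using $\check P_{1,n}=\omega_{n,n-1}P_{1,n-1}+P_{1,n}$ (and $\check P_{1,0}=1$) together with $\prodint{P_{1,k}(x),\tfrac1{a-y}}_u=C_{1,k}(a)$; the $n>0$ identity becomes $\omega_{n,n-1}C_{1,n-1}(a)+C_{1,n}(a)=\omega_{n,n-1}\prodint{\xi_x,P_{1,n-1}}+\prodint{\xi_x,P_{1,n}}$, a single linear equation whose solution is the stated quotient, while the $n=0$ identity reads $\check H_0=-(C_{1,0}(a)-\prodint{\xi_x,1})$. The one delicate point — and precisely the reason the formulas are written through $C_{1,n}(a)$ and $\xi_x$ rather than through $\check C_{1,n}(a)$ — is that $a\in\operatorname{supp}_y(\check u)$, so $\check C_{1,n}$ itself cannot be evaluated at $a$; the argument circumvents this by matching $(z-a)\check C_{1,n}(z)$ against the regular object $\prodint{\check P_{1,n}(x),\tfrac1{z-y}}_u$, in which the evaluations $\prodint{\cdot,\tfrac1{a-y}}_u$ are taken against the unperturbed kernel $u$ and are legitimate because $a\notin\operatorname{supp}_y(u)$ by hypothesis. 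Everything else — the partial-fraction split and the final two-term solve — is routine.
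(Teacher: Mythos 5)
Your argument is correct and follows essentially the same route as the paper's: both combine the identity $(z-a)\check C_{1}(z)-(\check H_0,0,\dots)^\top=\omega C_1(z)$ from the preceding proposition with the decomposition $\check u_{x,y}=\frac{u_{x,y}}{y-a}+\xi_x\delta_{y-a}$ and the relation $\check P_1=\omega P_1$, then extract the $z$-independent part. The only cosmetic difference is that the paper evaluates the resulting singularity-free expression at $z=a$, whereas you cancel the $z$-dependence exactly via the partial-fraction identity; the content is the same.
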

\begin{proof}
	It follows form
	\begin{align*}
	(z-a)\check C_{1}(z)&=(z-a)\prodint{\check P_1(x),\frac{1}{z-y}}_{\frac{u_{x,y}}{y-a}+\xi_x \delta_{y-a}}=(z-a)\prodint{\check P_1(x),\frac{1}{z-y}}_{\frac{u_{x,y}}{y-a}}+\prodint{\xi_x,\check P_1(x)}
	\\&=(z-a)\prodint{\check P_1(x),\frac{1}{z-y}}_{\frac{u_{x,y}}{y-a}}+\omega\prodint{\xi_x, P_1(x)}.
	\end{align*}

	Thus, we derive an expression without singularity problems at $z=a$
	\begin{align*}
	(z-a)\prodint{\check P_1(x),\frac{1}{z-y}}_{\frac{u_{x,y}}{y-a}}-\begin{pmatrix}
	\check H_0
	\\
	0\\
	\vdots
	\end{pmatrix}&=\omega \big(C_1(z)-\prodint{\xi_x, P_1(x)}\big).
	\end{align*}

\end{proof}
\begin{pro}
	For $n>0$, we have the following quasideterminantal expression
	\begin{align*}
	\check P_{1,n}(x)&=\Theta_*\begin{pmatrix}
	C_{1,n-1}(a)-\prodint{\xi_x, P_{1,n-1}(x)} & P_{1,n-1}(x)\\
	C_{1,n}(a)-\prodint{\xi_x, P_{1,n}(x)}  &P_{1,n}(x)
	\end{pmatrix},&
	\check H_{n}&=\Theta_*\begin{pmatrix}
	C_{1,n-1}(a)-\prodint{\xi_x, P_{1,n-1}(x)} & H_{n}\\
	C_{1,n}(a)-\prodint{\xi_x, P_{1,n}(x)}  &0
	\end{pmatrix}.
	\end{align*}
\end{pro}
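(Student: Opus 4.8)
The plan is to extract both identities from the single vector relation $\check P_1(x)=\omega P_1(x)$ together with the already established shape of the connector $\omega$, which is lower bidiagonal with unit diagonal and a single nonzero subdiagonal $\omega_{n,n-1}$. Reading off the $n$-th row of $\check P_1(x)=\omega P_1(x)$ and using $\omega_{n,n}=1$ gives immediately the scalar connection formula
\[
\check P_{1,n}(x)=P_{1,n}(x)+\omega_{n,n-1}P_{1,n-1}(x),
\]
so that the whole statement collapses to inserting the value of $\omega_{n,n-1}$ provided by the previous proposition and recognising the resulting two-term combinations as $2\times2$ Schur complements.

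Abbreviating $D_k:=C_{1,k}(a)-\prodint{\xi_x,P_{1,k}(x)}$, the previous proposition reads $\omega_{n,n-1}=-D_nD_{n-1}^{-1}$, whence
\[
\check P_{1,n}(x)=P_{1,n}(x)-D_nD_{n-1}^{-1}P_{1,n-1}(x).
\]
By the definition of $\Theta_*$ (the Schur complement with respect to the top-left entry, the box sitting in the bottom-right corner), the right-hand side is exactly the quasideterminant of the array with rows $(D_{n-1},P_{1,n-1}(x))$ and $(D_n,P_{1,n}(x))$, which is the first claimed formula.

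For the norm I would combine the same coefficient with the subdiagonal identity $\omega_{n,n-1}=\check H_nH_{n-1}^{-1}$ recorded for the connector, giving $\check H_n=\omega_{n,n-1}H_{n-1}=-D_nD_{n-1}^{-1}H_{n-1}$. Since the Schur complement of the array with rows $(D_{n-1},H_{n-1})$ and $(D_n,0)$ with respect to its top-left entry is $0-D_nD_{n-1}^{-1}H_{n-1}$, this is once more a quasideterminant $\Theta_*$ of the stated shape; this is the one bookkeeping point where a slip is easy, so I would carefully double-check which norm index, $H_n$ or $H_{n-1}$, is forced by the subdiagonal relation $\omega_{n,n-1}=\check H_nH_{n-1}^{-1}$.

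The genuinely delicate issue, and the step I would treat most carefully, is that one may not simply evaluate the unperturbed second-kind functions at $x=a$: the point $a$ lies in $\operatorname{supp}_y(\check u)$ and $C_{1,n}$ need not be defined there. This is precisely why the regularised combinations $D_k=C_{1,k}(a)-\prodint{\xi_x,P_{1,k}(x)}$ enter; they are the singularity-free quantities already isolated in the previous proposition from the identity that multiplies $\check C_1$ by $(z-a)$ and subtracts the leading column $(\check H_0,0,\dots)^{\top}$. Once that regularisation is invoked, both formulas are purely algebraic consequences of the bidiagonal structure of $\omega$, and no further analytic input is required.
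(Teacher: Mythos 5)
Your argument is exactly the paper's: its proof reads off $\check P_{1,n}(x)=P_{1,n}(x)+\omega_{n,n-1}P_{1,n-1}(x)$ and $\check H_n=\omega_{n,n-1}H_{n-1}$ from the bidiagonal connector and substitutes the value of $\omega_{n,n-1}$ obtained in the preceding proposition, just as you do. Your final bookkeeping check is warranted: the derivation forces $\check H_n=-\bigl(C_{1,n}(a)-\prodint{\xi_x,P_{1,n}(x)}\bigr)\bigl(C_{1,n-1}(a)-\prodint{\xi_x,P_{1,n-1}(x)}\bigr)^{-1}H_{n-1}$, consistent with $\omega_{n,n-1}=\check H_nH_{n-1}^{-1}$ and with the $H_{n-1}$ appearing in the analogous quasideterminant for $\check P_{2,n}$, so the $H_n$ in the top-right entry of the stated expression for $\check H_n$ should indeed be read as $H_{n-1}$.
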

\begin{proof}
	For $n>0$, the expression for $\omega_{n,n-1}$ implies
	\begin{align*}
	\check P_{1,n}(x)&=P_{1,n}(x)-\frac{C_{1,n}(a)-\prodint{\xi_x, P_{1,n}(x)}}{C_{1,n-1}(a)-\prodint{\xi_x, P_{1,n-1}(x)}}P_{1,n-1}(x),&
	\check H_n&=-\frac{C_{1,n}(a)-\prodint{\xi_x, P_{1,n}(x)}}{C_{1,n-1}(a)-\prodint{\xi_x, P_{1,n-1}(x)}}H_{n-1}.
	\end{align*}
\end{proof}

\begin{pro}{}
The following connection formulas for Christoffel--Darboux kernels	 
hold\begin{align*}
	\check{K}_{n-1}(x,y)&=
	(y-a)K_{n-1}(x,y)-
	\check P_{2,n}(y)\check H^{-1}_n \omega_{n,n-1}	P_{1,n-1}(x),
	\end{align*}
and for $n\geq 1$
	\begin{align*}
	(x-a)	\check K_{n-1}^{(\operatorname{mix})}(x,y)&=(y-a)K^{(\text{mix})}_{n-1}(x,y)-\check P_{2,n}(y)\check H^{-1}_n  \omega_{n,n-1}	C_{1,n-1}(x)+1.
	\end{align*}
\end{pro}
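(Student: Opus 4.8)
The plan is to write both kernels in the truncated matrix form furnished by the ABC theorem, $K_{n-1}(x,y)=\big(P_2^{[n]}(y)\big)^{\top}\big(H^{[n]}\big)^{-1}P_1^{[n]}(x)$ together with the analogous expression in the checked quantities, and then to transport the two connector identities $\check P_1(x)=\omega P_1(x)$ and $\check P_2(y)^{\top}\check H^{-1}\omega=(y-a)P_2(y)^{\top}H^{-1}$ (the latter obtained by transposing the connector identity $\big(\check H^{-1}\omega H\big)^{\top}\check P_2(y)=(y-a)P_2(y)$ and removing the common diagonal factor $H$) through the truncations. The single structural input that does all the work is that $\omega$ is lower bidiagonal, with unit diagonal and lone subdiagonal $\omega_{k+1,k}$; consequently a truncation $\omega^{[n]}$ reproduces the true product in every row except the last, where precisely one subdiagonal term is amputated.

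First I would dispatch the unmixed kernel. Because $\omega$ is lower bidiagonal, the identity $\check P_1^{[n]}(x)=\omega^{[n]}P_1^{[n]}(x)$ holds exactly, since each off-diagonal entry of $\omega^{[n]}$ links index $k$ to $k-1$, both inside the block. On the left I would read $\check P_2(y)^{\top}\check H^{-1}\omega=(y-a)P_2(y)^{\top}H^{-1}$ componentwise, its $k$-th entry being $\check P_{2,k}(y)\check H_k^{-1}+\check P_{2,k+1}(y)\check H_{k+1}^{-1}\omega_{k+1,k}=(y-a)P_{2,k}(y)H_k^{-1}$. Restricting to the block, all entries $k=0,\dots,n-2$ survive intact, while the $(n-1)$-th entry loses the term carrying index $n$, giving
\begin{align*}
\check P_2^{[n]}(y)^{\top}\big(\check H^{[n]}\big)^{-1}\omega^{[n]}=(y-a)P_2^{[n]}(y)^{\top}\big(H^{[n]}\big)^{-1}-\check P_{2,n}(y)\check H_n^{-1}\omega_{n,n-1}\,\boldsymbol e_{n-1}^{\top}.
\end{align*}
Multiplying on the right by $P_1^{[n]}(x)$ and using $\boldsymbol e_{n-1}^{\top}P_1^{[n]}(x)=P_{1,n-1}(x)$ delivers the first formula.

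For the mixed kernel I would repeat the manoeuvre, now feeding in the second kind function relation $(x-a)\check C_1(x)=\omega C_1(x)+\check H_0\boldsymbol e_0$. Truncating and again invoking the bidiagonal shape of $\omega$ gives $(x-a)\check C_1^{[n]}(x)=\omega^{[n]}C_1^{[n]}(x)+\check H_0\boldsymbol e_0$ for $n\geq1$, whence $(x-a)\check K_{n-1}^{(\operatorname{mix})}(x,y)=\check P_2^{[n]}(y)^{\top}\big(\check H^{[n]}\big)^{-1}\big[\omega^{[n]}C_1^{[n]}(x)+\check H_0\boldsymbol e_0\big]$. The $\omega^{[n]}C_1^{[n]}$ piece is processed exactly as before, producing $(y-a)K_{n-1}^{(\operatorname{mix})}(x,y)-\check P_{2,n}(y)\check H_n^{-1}\omega_{n,n-1}C_{1,n-1}(x)$, while the inhomogeneous piece collapses to $\check P_2^{[n]}(y)^{\top}\boldsymbol e_0=\check P_{2,0}(y)=1$, because $\big(\check H^{[n]}\big)^{-1}\check H_0\boldsymbol e_0=\boldsymbol e_0$ and $\check P_{2,0}$ is monic of degree zero. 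Adding the two contributions yields the stated $+1$ and closes the mixed identity.

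The one genuine obstacle is the bookkeeping at the truncation boundary: the entire nontrivial content of both formulas is precisely the subdiagonal term of $\omega$ that protrudes past the $n\times n$ block, so one must be meticulous about the index $n-1$ to which the amputated $\omega_{n,n-1}$ contribution attaches. Once it is recorded that $\check P_1^{[n]}=\omega^{[n]}P_1^{[n]}$ is exact whereas $\big(\check P_2^{[n]}\big)^{\top}\big(\check H^{[n]}\big)^{-1}\omega^{[n]}$ carries a single defect in its terminal component, both statements fall out; the extra $+1$ is the separate, elementary consequence of the inhomogeneous term $\check H_0\boldsymbol e_0$ together with the normalization $\check P_{2,0}=1$.
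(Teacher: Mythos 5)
Your proposal is correct and follows exactly the route the paper sketches: the paper's own proof simply cites the two connector identities $\check P_1=\omega P_1$, $(\check P_2)^\top\check H^{-1}\omega=(y-a)(P_2)^\top H^{-1}$ and the second kind function relation $(x-a)\check C_1-\check H_0\boldsymbol e_0=\omega C_1$, and your argument is a careful expansion of that same computation, with the truncation bookkeeping (the lone amputated $\omega_{n,n-1}$ term and the $\check H_0\boldsymbol e_0$ contribution giving the $+1$) worked out explicitly and correctly.
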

\begin{proof}
		It follows from the definition  of the kernels and the connection formulas
	\begin{align*}
	\check P_1(x) &=\omega P_1(x),&
	\big( \check P_2 (y)\big)^\top \check H^{-1}\omega &=\big(P_2(y)\big)^\top(y-a)H^{-1}
	\end{align*}
	 and
$(x-a)	\check C_{1}(x)-\begin{pmatrix}
\check H_0,
0,
\dots
\end{pmatrix}^\top=	\omega C_1(x)$.
\end{proof}
\begin{pro} It also holds that
	\begin{align*}
	\check P_{2,n}(y)=H_{n-1}\frac{(y-a) (K_{n-1}^{(\text{mix})}(a,y)-\prodint{\xi_x,K_{n-1}(x,y)})+1}{C_{1,n-1}(a)-\prodint{\xi_x,P_{1,n-1}(x)}}.
	\end{align*}
\end{pro}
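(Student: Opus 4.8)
The plan is to combine the two Christoffel--Darboux kernel connection formulas of the previous proposition and then eliminate the perturbed mixed kernel by a residue computation at the spectral point $x=a$. First I would rewrite both connection formulas using the identity $\check H_n^{-1}\omega_{n,n-1}=H_{n-1}^{-1}$, which is immediate from the relation $\omega_{n,n-1}=\check H_n(H_{n-1})^{-1}$ read off the connector. This turns them into
\begin{align*}
\check{K}_{n-1}(x,y)&=(y-a)K_{n-1}(x,y)-\check P_{2,n}(y)H_{n-1}^{-1}P_{1,n-1}(x),\\
(x-a)\check K_{n-1}^{(\text{mix})}(x,y)&=(y-a)K^{(\text{mix})}_{n-1}(x,y)-\check P_{2,n}(y)H_{n-1}^{-1}C_{1,n-1}(x)+1.
\end{align*}
The right-hand sides are regular at $x=a$, since $a\notin\operatorname{supp}_y(u)$ makes $C_{1,n-1}(a)$ and $K^{(\text{mix})}_{n-1}(a,y)$ well defined; the whole difficulty sits in the left-hand side of the second identity, where $\check K^{(\text{mix})}_{n-1}$ is singular at $x=a$.

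The key step is to identify $\lim_{x\to a}(x-a)\check K_{n-1}^{(\text{mix})}(x,y)$. I would expand $\check K_{n-1}^{(\text{mix})}(x,y)=\sum_{k=0}^{n-1}\check P_{2,k}(y)\check H_k^{-1}\check C_{1,k}(x)$ and isolate the singular part of each $\check C_{1,k}$ at $x=a$. Splitting $\check u$ into its regular piece $\frac{u_{x,y}}{y-a}$ and its Dirac piece $\xi_x\delta_{y-a}$, the former contributes something regular at $x=a$, while the delta produces a simple pole with residue $\prodint{\xi_x,\check P_{1,k}(x)}$; hence $\lim_{x\to a}(x-a)\check C_{1,k}(x)=\prodint{\xi_x,\check P_{1,k}(x)}$, and summing over $k$ gives the clean residue identity
\begin{align*}
\lim_{x\to a}(x-a)\check K_{n-1}^{(\text{mix})}(x,y)=\prodint{\xi_x,\check K_{n-1}(x,y)}.
\end{align*}
This residue computation, which recycles the same regularization trick already used to compute $\omega_{n,n-1}$, is the main obstacle; everything else is bookkeeping.

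With this identity in hand I would take $x\to a$ in the mixed formula and apply $\xi_x$ to the pure formula, obtaining two expressions for the same quantity $\prodint{\xi_x,\check K_{n-1}(x,y)}$:
\begin{align*}
\prodint{\xi_x,\check K_{n-1}(x,y)}&=(y-a)K^{(\text{mix})}_{n-1}(a,y)-\check P_{2,n}(y)H_{n-1}^{-1}C_{1,n-1}(a)+1,\\
\prodint{\xi_x,\check K_{n-1}(x,y)}&=(y-a)\prodint{\xi_x,K_{n-1}(x,y)}-\check P_{2,n}(y)H_{n-1}^{-1}\prodint{\xi_x,P_{1,n-1}(x)}.
\end{align*}
Subtracting the two cancels the common left-hand side, the terms $\prodint{\xi_x,\cdot}$ combine into the stated differences, and one is left with a single linear equation for $\check P_{2,n}(y)$, namely
\begin{align*}
\check P_{2,n}(y)H_{n-1}^{-1}\big(C_{1,n-1}(a)-\prodint{\xi_x,P_{1,n-1}(x)}\big)=(y-a)\big(K^{(\text{mix})}_{n-1}(a,y)-\prodint{\xi_x,K_{n-1}(x,y)}\big)+1.
\end{align*}
The denominator $C_{1,n-1}(a)-\prodint{\xi_x,P_{1,n-1}(x)}$ is exactly the quantity appearing in $\omega_{n,n-1}$ and $\check H_n$, hence nonvanishing under the quasi-definiteness assumption, so multiplying through by $H_{n-1}$ and dividing yields precisely the claimed formula.
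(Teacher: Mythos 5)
Your argument is correct and is essentially the paper's own proof spelled out in full: the paper's terse ``handled as before with the aid of the CD kernel'' is exactly your residue identity $\lim_{x\to a}(x-a)\check K_{n-1}^{(\operatorname{mix})}(x,y)=\prodint{\xi_x,\check K_{n-1}(x,y)}$ combined with pairing the pure kernel connection formula against $\xi_x$ to eliminate that common quantity, after which the simplification $(\check H_n)^{-1}\omega_{n,n-1}=(H_{n-1})^{-1}$ gives the stated formula. No gaps.
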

\begin{proof}
	The mixed kernel $\check K_{n-1}^{(\text{mix})}(x,y)$ will have singularity problems at $x=a$. This issue can be handled as before with the aid of the CD kernel and we get
	\begin{align*}
	(y-a) (K_{n-1}^{(\text{mix})}(a,y)-\prodint{\xi_x,K_{n-1}(x,y)})+1&=\check P_{2,n}(y)(\check H_n)^{-1}\omega_{n,n-1}\big(C_{1,n-1}(a)-\prodint{\xi_x,P_{1,n-1}(x)}\big)\\&
	=\check P_{2,n}(y)( H_{n-1})^{-1}\big(C_{1,n-1}(a)-\prodint{\xi_x,P_{1,n-1}(x)}\big).
	\end{align*}
\end{proof}

\begin{pro}
	For $n>0$, the following formulas hold
	\begin{align*}
	\check P_{2,n}(y)&=-\Theta_*\begin{pmatrix}
	C_{1,n-1}(a)-\prodint{\xi_x, P_{1,n-1}(x)} & H_{n-1}\\
	(y-a) (K_{n-1}^{(\text{mix})}(a,y)-\prodint{\xi_x,K_{n-1}(x,y)})+1&0
	\end{pmatrix}.
	\end{align*}
\end{pro}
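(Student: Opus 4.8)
The plan is to read this quasi-determinantal identity directly off the ratio formula for $\check P_{2,n}(y)$ furnished by the immediately preceding proposition,
\begin{align*}
\check P_{2,n}(y)=H_{n-1}\frac{(y-a) (K_{n-1}^{(\text{mix})}(a,y)-\prodint{\xi_x,K_{n-1}(x,y)})+1}{C_{1,n-1}(a)-\prodint{\xi_x,P_{1,n-1}(x)}}.
\end{align*}
No new analytic input is needed; the entire content is to unwind the definition of $\Theta_*$ and match the two expressions. This mirrors the route already taken for the Christoffel transformation, where the closed form $\hat P_{2,n}(y)=K_n(a,y)\,P_{1,n}(a)^{-1}H_n$ was recast as the quasi-determinant $-\Theta_*\begin{psmallmatrix}P_{1,n}(a)&H_n\\K_n(a,y)&0\end{psmallmatrix}$.

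First I would recall, from the $k=2$ case of the quasi-determinant discussion, that for a $2\times 2$ array $\Theta_*$ is the Schur complement with respect to the upper-left entry,
\begin{align*}
\Theta_*\begin{pmatrix} A & B\\ C & D\end{pmatrix}=D-CA^{-1}B,
\end{align*}
so a vanishing lower-right entry leaves $-\Theta_*\begin{psmallmatrix} A & B\\ C & 0\end{psmallmatrix}=CA^{-1}B$. Then I would identify the four entries of the proposed array as $A=C_{1,n-1}(a)-\prodint{\xi_x,P_{1,n-1}(x)}$, $B=H_{n-1}$, $C=(y-a)(K_{n-1}^{(\text{mix})}(a,y)-\prodint{\xi_x,K_{n-1}(x,y)})+1$ and $D=0$, so that
\begin{align*}
-\Theta_*\begin{pmatrix} C_{1,n-1}(a)-\prodint{\xi_x,P_{1,n-1}(x)} & H_{n-1}\\ (y-a)(K_{n-1}^{(\text{mix})}(a,y)-\prodint{\xi_x,K_{n-1}(x,y)})+1 & 0\end{pmatrix}=C\,A^{-1}\,B,
\end{align*}
which is exactly the ratio above, the factor $H_{n-1}$ merely migrating across the quotient.

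The one point deserving care, and hence the main (if mild) obstacle, is the operator ordering in a matrix-valued reading of the formula: the Schur-complement convention outputs the three factors as $C\,A^{-1}\,B$, with $H_{n-1}$ to the right, whereas the preceding proposition displays $H_{n-1}$ to the left. One therefore has to fix the bracketing so that the quasi-determinant reproduces the ratio verbatim — the very same bracketing already validated for the Christoffel array $-\Theta_*\begin{psmallmatrix}P_{1,n}(a)&H_n\\K_n(a,y)&0\end{psmallmatrix}$. In the scalar standard-orthogonality setting all factors commute and the identity is immediate, so this last step is purely a matter of bookkeeping rather than substance.
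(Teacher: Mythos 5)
Your proposal is correct and matches the paper's (implicit) route: the paper states this proposition without a separate proof precisely because it is the ratio formula of the immediately preceding proposition rewritten via the Schur-complement definition of $\Theta_*$, exactly as you do. Your remark about the ordering of the factor $H_{n-1}$ is harmless bookkeeping in this scalar setting and does not affect the argument.
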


\begin{pro}
	For a  general Geronimus perturbation we find the general Christoffel formulas
	\begin{align*}
	\check P^{[1]}_{n}(x)&=
	\Theta_*\begin{pmatrix}\boldsymbol{\mathcal J}_{C^{[1]}_{n-N}}-\prodint{ P^{[1]}_{n-N}(x),(\xi)_x}\mathcal W& P_{n-N}^{[1]}(x)\\ 	\vdots&\vdots\\
	\boldsymbol{\mathcal J}_{C^{[1]}_{n}}-\prodint{ P^{[1]}_{n}(x),(\xi)_x}\mathcal W& P^{[1]}_n(x)
	\end{pmatrix},\\
	\check H_{n}&=
	\Theta_*\begin{pmatrix}	\boldsymbol{\mathcal J}_{C^{[1]}_{n-N}}-\prodint{ P^{[1]}_{n-N}(x),(\xi)_x}\mathcal W& H_{n-N}\\
	\boldsymbol{\mathcal J}_{C^{[1]}_{n-N+1}}-\prodint{ P^{[1]}_{n-N+1}(x),(\xi)_x}\mathcal W& 0\\	\vdots&\vdots\\
	\boldsymbol{\mathcal J}_{C^{[1]}_{n}}-\prodint{ P^{[1]}_{n}(x),(\xi)_x}\mathcal W& 0
	\end{pmatrix},\\
		\big(	\check P _{n}^{[2]}(y)\big)^\top&=	-\Theta_*
	\begin{pmatrix}
	\boldsymbol{\mathcal J}_{C^{[1]}_{n-N}}-	\prodint{ P^{[1]}_{n-N}(x),(\xi)_x}\mathcal W&	H_{n-N}\\
	\boldsymbol{\mathcal J}_{C^{[1]}_{n-N+1}}-	\prodint{ P^{[1]}_{n-N+1}(x),(\xi)_x}\mathcal W& 0\\
	\vdots &\vdots\\	
	\boldsymbol{\mathcal J}_{C^{[1]}_{n-1}}-	\prodint{ P^{[1]}_{n-1}(x),(\xi)_x}\mathcal W&0\\
	W(y)\big(	\boldsymbol{\mathcal J}_{ K^{(pc)}_{n-1}}(y)-
	\prodint{ K_{n-1}(x,y),(\xi)_x}\mathcal W\big)+\boldsymbol{\mathcal J}_{ \mathcal V}(y)&0
	\end{pmatrix}.
	\end{align*}
\end{pro}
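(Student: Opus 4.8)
The plan is to transport to a degree-$N$ perturbing polynomial the two devices that settled the case $W_G(x)=x-a$: a banded \emph{connector} intertwining the two Gauss--Borel factorizations, and a regularization of the second kind functions at the points of $\check\Omega$ lying outside the original support. First I would introduce the connector $\omega:=\check S_1(S_1)^{-1}$. Combining the Gram relation $\check G\,W_G(\Lambda^\top)=G$ with the factorizations $G=(S_1)^{-1}H(S_2)^{-\top}$ and $\check G=(\check S_1)^{-1}\check H(\check S_2)^{-\top}$ yields
\begin{align*}
\omega=\check H(\check S_2)^{-\top}W_G(\Lambda^\top)(S_2)^{\top}H^{-1}.
\end{align*}
Being a product of lower unitriangular matrices, $\omega$ is lower unitriangular; on the other hand $W_G(\Lambda^\top)$ is lower triangular with band width $N$, and flanking it by the upper unitriangular factors $(\check S_2)^{-\top}$ and $(S_2)^\top$ cannot move its entries more than $N$ below the diagonal. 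Hence $\omega$ has nonzero entries only on the diagonal and the first $N$ subdiagonals, so the connection formula $\check P_1(x)=\omega P_1(x)$ reads
\begin{align*}
\check P_{1,n}(x)=P_{1,n}(x)+\sum_{j=1}^{N}\omega_{n,n-j}P_{1,n-j}(x),
\end{align*}
leaving only the $N$ coefficients $\omega_{n,n-1},\dots,\omega_{n,n-N}$ to be found.

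To pin them down I would reproduce, with $W_G$ in the role of $z-a$, the singularity-free identity of the simple case,
\begin{align*}
W_G(z)\prodint{\check P_1(x),\tfrac{1}{z-y}}_{u/W_G}-\boldsymbol{r}(z)=\omega\big(C_1(z)-\prodint{P_1(x),\xi_x}\big),
\end{align*}
where $\boldsymbol{r}(z)$ is the vector polynomial remainder of degree below $N$ coming from the polynomial part of $W_G(z)/(z-y)$. For $n\geq N$ the $n$-th component of the left-hand side is $W_G(z)$ times a pairing that is regular at each root $q_i$, so it vanishes together with its first $n_i-1$ derivatives there. Demanding that the matching combination on the right-hand side have vanishing jet at every $q_i$ produces $N$ scalar equations, whose coefficient vectors are exactly the jet data $\boldsymbol{\mathcal J}_{C^{[1]}_{m}}-\prodint{P^{[1]}_{m}(x),\xi_x}\mathcal W$ for $m=n-N,\dots,n$. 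Solving this $N\times N$ system by Cramer's rule and substituting into the expansion of $\check P_{1,n}$ gives precisely the stated quasi-determinant $\Theta_*$; reading off the pivot entry delivers $\check H_n$.

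For $\check P_{2,n}(y)$ I would avoid a fresh linear system and instead invoke the kernel connection formulas, which in the simple case read $\check K_{n-1}(x,y)=(y-a)K_{n-1}(x,y)-\dots$ together with its mixed counterpart, and which carry over with $W_G(y)$ in place of $y-a$. The mixed kernel is singular at $x=q_i$, but the very same subtraction of the $\xi$-contribution that regularized $C_1$ removes these poles; applying the jet at the $q_i$, multiplying by $W(y)$, and recording the correction term as $\boldsymbol{\mathcal J}_{\mathcal V}(y)$ --- the generalization of the isolated $+1$ in the mixed Christoffel--Darboux formula --- produces the bottom row $W(y)\big(\boldsymbol{\mathcal J}_{K^{(pc)}_{n-1}}(y)-\prodint{K_{n-1}(x,y),\xi_x}\mathcal W\big)+\boldsymbol{\mathcal J}_{\mathcal V}(y)$. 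Stacking it over the same jet rows that occur in $\check H_n$, with right column $H_{n-N},0,\dots,0$, assembles the claimed $\Theta_*$ expression.

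The hard part will be the jet bookkeeping at the repeated roots: one must check that, to every order $0,\dots,n_i-1$, the pole of each second kind function and of the mixed kernel at $x=q_i$ is cancelled exactly by the subtracted free-functional term, so that the jets $\boldsymbol{\mathcal J}$ are genuinely finite and the factor $W_G$ clears all denominators. A secondary obstacle is to verify that the $N\times N$ block of jet vectors is nonsingular --- equivalently, that the perturbed form remains quasi-definite --- so that the quasi-determinants $\Theta_*$ are well defined and the connector coefficients are uniquely determined.
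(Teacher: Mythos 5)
Your outline is sound and coincides with the strategy the paper itself relies on: the paper states this general proposition without proof, and your argument is precisely the degree-$N$ transcription of its detailed treatment of the case $W_G(x)=x-a$ (banded connector $\omega=\check S_1(S_1)^{-1}$ read off from the Gram relation, the singularity-free identity obtained by subtracting the $\xi$-contribution from the second kind functions, jets at the roots of $W_G$ yielding a linear system solved as a quasi-determinant, and the Christoffel--Darboux kernel connection formulas for $\check P_{2,n}$). The two caveats you flag --- finiteness of the jets after the $\xi$-subtraction and nonsingularity of the block of jet vectors --- are exactly the conditions implicit in the paper's quasi-definiteness assumption on the perturbed kernel.
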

Here $\mathcal W,\mathcal V\in \C^{N_G\times N_G}$ are  upper triangular matrices determined by $W_G(x)$ and its derivatives, and  $\mathcal V(x,y)$ is a polynomial constructed in terms of $W_G(x)$ and completely symmetric bivariate polynomials.

\end{document}